\numberwithin{equation}{section}
\newtheorem{theorem} {Theorem} [section]
\newtheorem{proposition}[theorem]{Proposition}
\newtheorem{lemma}  [theorem]     {Lemma}
\newtheorem{question}  [theorem]     {Question}
\newtheorem{remark}  [theorem]     {Remark}
\newtheorem{conjecture}  [theorem]     {Conjecture}
\theoremstyle{definition}
\newtheorem{definition}  [theorem]     {Definition}
\newtheorem{example}  [theorem]     {Example}
\renewcommand{\a}{\alpha}
\renewcommand{\b}{\beta}
\newcommand{\g}{\gamma}
\renewcommand{\o}{\omega}
\newcommand{\eps}{\epsilon}
\renewcommand{\gg}{\mathfrak g}
\newcommand{\gk}{\mathfrak k}
\newcommand{\ga}{\mathfrak a}
\newcommand{\gq}{\mathfrak q}
\newcommand{\gz}{\mathfrak z}
\newcommand{\gp}{\mathfrak p}
\newcommand{\gl}{\mathfrak l}
\newcommand{\gh}{\mathfrak h}
\newcommand{\gm}{\mathfrak m}
\newcommand{\gu}{\mathfrak u}
\newcommand{\gt}{\mathfrak t}
\newcommand{\ad}{\rm {ad}}
\newcommand{\SU}{\rm {SU}}
\newcommand{\so}{\mathfrak{so}}
\newcommand{\su}{\mathfrak{su}}
\newcommand{\T}{\rm{T}}
\newcommand{\ep}{\epsilon}
\newcommand{\Lb}{\Lambda^b}
\renewcommand*\backref[1]{}
\renewcommand*\backrefalt[4]{ \ifcase #1 \or (cited on page #2) \else (cited on pages #2) \fi}%(no citations)
\begin{document}

\title{A note on compact homogeneous manifolds with Bismut parallel torsion}

\begin{abstract}
In this article, we investigate the class of Hermitian manifolds whose Bismut connection has parallel torsion ({\rm BTP} for brevity). In particular, we focus on the case where the manifold is (locally) homogeneous with respect to a group of holomorphic isometries and we fully characterize the compact Chern flat {\rm BTP} manifolds. Moreover we show that certain compact flag manifolds are {\rm BTP} if and only if the metric is K\"ahler or induced by the Cartan-Killing form and we then characterize {\rm BTP} invariant metrics on compact semisimple Lie groups which are Hermitian w.r.t. a Samelson structure and are projectable along the Tits fibration. We state a conjecture concerning the question when the Bismut connection of a  BTP compact Hermitian locally homogeneous manifold has parallel curvature, giving examples and providing evidence in some special cases.
\end{abstract}

\author{Fabio Podest\`a}
\address{Fabio Podest\`a. Dipartimento di Matematica e Informatica ``U. Dini", Universit\`a degli Studi di Firenze,
Viale Morgagni 67/a, 50134 Firenze, Italy.} \email{fabio.podesta@unifi.it}
\thanks{Podest\`a was supported by GNSAGA of INdAM and by the project PRIN 2022AP8HZ9 ``Real and Complex
Manifolds: Geometry and Holomorphic Dynamics". Zheng is partially supported by NSFC grant 12071050 and 12141101, a Chongqing grant cstc2021ycjh-bgzxm0139, and the 111 Project D21024.}

\author{Fangyang Zheng}
\address{Fangyang Zheng. School of Mathematical Sciences, Chongqing Normal University, Chongqing 401331, China}
\email{20190045@cqnu.edu.cn;\ franciszheng@yahoo.com} \thanks{}

%\date{\today}

\subjclass[2010]{ 53C55, 53C25}
\keywords{Bismut connection, Bismut parallel torsion, homogeneous space.}

\maketitle

\tableofcontents

\section{Introduction}
Given a Riemannian manifold $(M,g)$, the study of metric connections with skew-symmetric torsion has gained a remarkable interest, as they come up in different meaningful geometric structures and are a powerful tool in Type II string theory and in supergravity theories (see e.g.\,\cite{Ag}). In particular metric connections whose torsion is skew-symmetric and parallel have many interesting properties, e.g. their curvature tensor is a symmetric endomorphism of $\Lambda^2(TM)$, and have become an important research topic in the recent years. Basic examples are given by the canonical connections of naturally reductive homogeneous spaces, Sasakian and $3$-Sasakian spaces, nearly K\"ahler manifolds and nearly parallel $G_2$-structures in dimension $7$ among others (see e.g.). Some classification results for manifolds carrying connections with parallel skew-symmetric torsion have been recently achieved considering the holonomy action on the tangent space (see \cite{CS}, \cite{CMS}).

In the Hermitian setting, it is well-known that a Hermitian manifold $(M^n,g,J)$ admits a unique metric connection $\nabla^b$ which leaves $J$ parallel and whose torsion $T^b$ is skew-symmetric. The connection $\nabla^b$ is called the {\it Bismut connection} (see \cite{Bismut}), although previously introduced by Strominger (\cite{Strominger}), and its torsion $3$-form coincides with $-d^c\omega$, where $\omega=g(J\cdot,\cdot)$ is the corresponding fundamental $2$-form. Hermitian manifolds endowed with the Bismut connection are called {\it K\"ahler with torsion (KT)} and when the torsion form is also closed or, equivalently $\partial\overline\partial \omega=0$, the KT-structure is called a {\it strong KT structure (SKT)} and the Hermitian metric $g$ is said pluriclosed.  In recent years the Bismut connection has turned out to be a very meaningful object in different research topics, including Hermitian curvature flows (see e.g.\,\cite{ST}) as well as the search for special Hermitian connection with remarkable geometric and algebraic properties (see e.g. \cite{AV},\cite{AOUV},\cite{FT}).

In this paper we will focus on the case when the Bismut connection of a Hermitian manifold $(M,g,J)$ is supposed to be $\nabla^b$-parallel. In this case, according to \cite{ZhaoZ22}, we will call the manifold (and sometimes also the metric) {\it Bismut torsion parallel} ({\rm BTP} for brevity). In \cite{ZhaoZ19Str} it has been proved amongst others that for {\rm BTP} manifolds the pluriclosed condition is equivalent to saying that the manifold satisfies the Bismut K\"ahler-like condition, while in \cite{ZhaoZ22} several general properties of {\rm BTP} manifolds are proved, showing e.g. that on a compact {\rm BTP} manifold the metric $g$ is Gauduchon while the holonomy of $\nabla^b$ reduces to a subgroup of $U(n-1)$ if $g$ is not balanced. There are a number of papers which contributed to this observation. As a partial list see  \cite{AV} and \cite{ZhaoZ22} for example.

An important class of non-K\"ahler manifolds are the Vaisman manifolds, namely those compact non-K\"ahler, locally conformally K\"ahler manifolds whose Lee form is parallel w.r.t. the Levi-Civita connection. As proved in \cite{AV}, any Vaisman manifold is BTP.  Another subset of BTP is the so-called Bismut K\"ahler-like (or BKL for brevity) manifolds, which are Hermitian manifolds whose Bismut curvature tensors obeys all K\"ahler symmetries. By the main result of \cite{ZhaoZ19Str}, any  BKL metric is BTP.

When $n=2$, Vaisman, BKL, and BTP conditions are all equivalent, and compact non-K\"ahler Vaismann surfaces are fully classified by Belgun in \cite{B00}. They are non-K\"ahler properly elliptic surfaces, Kodaira surfaces, and (Class 1 and elliptic) Hopf surfaces. When $n\geq 3$, however, Vaisman and BKL become disjoint sets: a result proved in \cite{YZZ} says that for any compact non-K\"ahler BKL manifold $(M^n,g)$ with $n\geq 3$, $M^n$ does not admit any Vaisman metric.

For $n\geq 3$, there are other non-balanced BTP manifolds which are not BKL or Vaisman. There are also balanced (non-K\"ahler) BTP manifolds. We refer the readers to \cite{ZhaoZ19Nil} and \cite{ZhaoZ22} for more examples of BTP manifolds. They form a rich yet highly restrictive class of special Hermitian manifolds.

In this paper, we will be  concerned with BTP manifolds that are (locally) homogeneous Hermitian manifolds. By Ambrose-Singer Theorem (see e.g.\,\cite{AS},\cite{TV}) and its Hermitian extension due to Sekigawa (\cite{Sekigawa}), it is well known that a complete simply-connected
Hermitian manifold $(M,g,J)$ is homogeneous if and only if there exists a Hermitian connection $\nabla$, called an {\it Ambrose-Singer connection}, whose torsion and curvature are $\nabla$-parallel. In particular if there exists a Hermitian Ambrose-Singer connection whose torsion is skew-symmetric, the manifold $(M,g,J)$ turns out to be a naturally reductive homogeneous space with respect to the action of a transitive group of biholomorphic isometries.

 Clearly, a Hermitian Ambrose-Singer connection with skew-symmetric torsion on a Hermitian manifold $(M,g,J)$ coincides with the Bismut connection of the metric $g$. Following \cite{NiZheng} we give the following
\begin{definition}\label{defAS} A {\rm BTP} Hermitian manifold $(M,g,J)$ is called {\em Bismut Ambrose-Singer} ({\rm BAS} in short) if its Bismut connection has parallel torsion and curvature or, equivalently, if its Bismut connection is an Ambrose-Singer connection.
\end{definition}

For instance, every compact flag manifold $F=K/H$ where $K$ is a compact semisimple Lie group and $H\subset K$ is the centralizer of a torus in $K$, can be endowed with many invariant complex structures $J$ and the metric $g_o$ on $F$ induced by the Cartan-Killing form of the Lie algebra $\gk$ of $K$ is automatically Hermitian w.r.t. $J$, with  its Bismut connection being the associated canonical connection, hence Ambrose-Singer.

The aim of this paper is to investigate the geometry of (locally) homogeneous Hermitian manifolds whose Bismut connection is BTP and in particular to study conditions under which the Bismut connection is Ambrose-Singer.

Our first main result concerns the case of compact Chern flat Hermitian manifolds $(M,g,J)$, which by a well-known result due to Boothby (\cite{Boothby}) are covered by a complex Lie group endowed with a left-invariant Hermitian metric. We first prove the following characterization
\begin{theorem}\label{ThmChernflat} Let $(M^n,g)$ be a compact Chern flat {\rm BTP} manifold. Then its universal cover is a complex reductive Lie group $G$ equipped with a left-invariant metric, where $G={\mathbb C}^k\times G_1\times \cdots \times G_r$ is the orthogonal direct product with  each $G_i$  a simple complex Lie group.

Viceversa let $G={\mathbb C}^k \times G_1\times \cdots \times G_r$ where each $G_i$ is any connected, simply-connected simple complex Lie group. Then there exists a left-invariant Hermitian metric $g$ on $G$ which is {\rm BTP}.\end{theorem}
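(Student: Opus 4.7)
\emph{Forward direction.} I would first apply Boothby's theorem to conclude that the universal cover of $M$ is a simply-connected complex Lie group $G$ equipped with a left-invariant Hermitian metric $g$, whose Chern connection is the canonical left-invariant connection $\nabla^c_X Y = 0$ on left-invariant $X,Y$. Compactness of $M$ yields a uniform lattice in $G$, hence $G$ is unimodular. Since $T^b$ is left-invariant, the BTP condition $\nabla^b T^b = 0$ translates into the purely algebraic identity
\[
T^b(\nabla^b_W X, Y, Z) + T^b(X, \nabla^b_W Y, Z) + T^b(X, Y, \nabla^b_W Z) = 0
\]
for all $W,X,Y,Z \in \gg$. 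Coupling this with the Koszul-type formula for $\nabla^b$ in terms of $g$, $J$ and the bracket of $\gg$, and with $T^b = -d^c\omega$, I would push the resulting algebraic constraints together with unimodularity to force $\gg$ to be reductive, hence $\gg = \gz \oplus \gg_1 \oplus \cdots \oplus \gg_r$ with $\gz$ the center and each $\gg_i$ a simple complex ideal.

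\emph{The hard step in the forward direction} is showing that this decomposition is $g$-orthogonal. The idea is to pick any simple ideal $\gg_i$ and show, via the BTP identity above combined with $\mathrm{ad}$-skewness of the bracket and the $J$-invariance of $\gg_i$, that the bracket between $\gg_i$ and its $g$-orthogonal complement must vanish, so that the complement is itself an ideal; iterating peels off the simple factors orthogonally. This is the main technical obstacle, because the BTP relation couples all four slots of $T^b$ simultaneously and one must carefully disentangle the cross-terms between an ideal and its orthogonal complement.

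\emph{Converse direction.} The strategy is constructive. On the $\mathbb{C}^k$ factor take any flat Hermitian metric; BTP is trivial since $T^b = 0$. On each simple complex Lie group $G_i$, fix a compact real form $\gk_i \subset \gg_i$, so that $\gg_i = \gk_i \oplus J\gk_i$, and define the left-invariant Hermitian inner product
\[
g_i(X,Y) := -B_i(X, \tau Y),
\]
where $B_i$ is the Killing form of $\gg_i$ and $\tau$ is the complex conjugation fixing $\gk_i$. This is positive-definite since $-B_i$ is positive-definite on $\gk_i$, and Hermitian by construction. Equipping $G$ with the orthogonal direct sum metric and using that BTP is preserved under Hermitian products, everything reduces to verifying BTP on each $(G_i, g_i)$. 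The core computation is making $\nabla^b$ explicit in terms of $\tau$ and the $\mathrm{ad}(\gk_i)$-invariance of $B_i$; after this, parallelism of $T^b$ reduces to an identity that follows from the Jacobi identity combined with $\mathrm{ad}$-invariance of the Killing form. The main difficulty on the converse side is precisely this explicit identification of the Bismut connection and the subsequent cancellation, but both are controlled by the strong symmetry of $g_i$ under the compact real form.
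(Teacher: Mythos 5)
Your converse direction is essentially the paper's argument: the metric $g_i(X,Y)=-B_i(X,\tau Y)$ is exactly the canonical metric ($-B$ on $\gk_i$, $+B$ on $J\gk_i$, orthogonal cross terms), the Bismut connection comes out as $\nabla^b_x=\ad(x)-\ad(x)^*=\ad(x)+\ad(\tau x)$, and $\nabla^bT^b=0$ does reduce to the Jacobi identity. That half of the proposal is a correct sketch.

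The forward direction, however, has a genuine gap at exactly the point you flag as ``the main technical obstacle'': you state that the bracket between an ideal and its orthogonal complement must vanish, but you give no mechanism for extracting this from the BTP equations, and the appeal to unimodularity is a red herring (complex nilpotent groups admitting lattices are unimodular and give compact Chern-flat manifolds, yet are generically not BTP --- so unimodularity plus ``algebraic constraints'' cannot be what forces reductivity; the entire content must come from the BTP identities). The missing idea is the following symmetry. Writing the second BTP equation in a unitary basis, interchanging $i\leftrightarrow j$ and $k\leftrightarrow\ell$, conjugating, and subtracting gives $\sum_r\bigl(T^j_{ir}\overline{T^k_{\ell r}}-T^\ell_{kr}\overline{T^i_{jr}}\bigr)=0$; setting $k=j$, $\ell=i$ yields
\[
\sum_r |T^j_{ir}|^2=\sum_r |T^i_{jr}|^2
\]
in every unitary basis, hence the same for the structure constants $C^j_{ik}=-T^j_{ik}$. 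Applied to a basis adapted to an ideal $\gh$ and its orthocomplement, $[\gh,\gg]\subseteq\gh$ (i.e.\ $C^\alpha_{i*}=0$) immediately forces $C^i_{\alpha*}=0$, i.e.\ $[\gh^\perp,\gg]\subseteq\gh^\perp$. This single lemma --- the orthogonal complement of any ideal is an ideal --- delivers reductivity and the orthogonality of the decomposition simultaneously, so your planned two-stage argument (reductivity first, orthogonality second) is both unsubstantiated in its first stage and unnecessary. Without producing this identity, or an equivalent one, the proof does not close.
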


In Proposition \ref{ChernflatBAS} we prove that any {\rm BTP} manifold as in Theorem \ref{ThmChernflat} is actually {\rm BAS} while in Theorem \ref{thm2.6} we address the problem of describing the moduli space of such {\rm BTP}  metrics.

We then consider the case of a simply-connected generalized flag manifold $F$, namely a homogeneous space $K/H$ where $K$ is a compact semisimple Lie group and $H$ is the centralizer in $K$ of a torus. It is well-known that this class of manifolds exhausts the set of all compact simply-connected K\"ahler homogeneous spaces and we investigate the properties of invariant BTP metrics on such manifolds. After recalling the basic facts concerning the existence of invariant complex structures $J$ on a flag manifold $F$, we consider $J$-Hermitian invariant metrics showing that they are automatically balanced and we then prove a characterization of BTP metrics on a special remarkable subclass $\mathcal C$ of flag manifolds. Indeed a flag manifold $F$ belongs to $\mathcal C$ when the isotropy representation of the isotropy subgroup $H$ has at most two, necessarily non-equivalent, submodules; the class $\mathcal C$ clearly contains all the Hermitian symmetric spaces as a proper subset as well as infinite series of non-symmetric flag manifolds, as it is shown in Table \eqref{table}, which reproduces the full classification obtained in \cite{AC}. Our result is the following
\begin{theorem}\label{flag2summ} Let $F=K/H$ be a flag manifold with $K$ a compact simple Lie group and let $J$ be an invariant complex structure on $F$. If the isotropy representation of $H$ has at most two summands, then every invariant Hermitian {\rm BTP} metric is either K\"ahler or a multiple of the metric induced by $-B$, where $B$ denotes the Cartan-Killing form of the Lie algebra $\gk$ of $K$.  \end{theorem}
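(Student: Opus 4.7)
The plan is to reduce the BTP condition to an algebraic system in the two parameters of the invariant Hermitian metric, and then to show by case analysis that this system admits only the two claimed families of solutions. Write $\gk = \gh \oplus \gm$ with $\gm = \gm_1 \oplus \gm_2$ the two inequivalent $J$-stable irreducible $H$-submodules, so that every $K$-invariant Hermitian metric has the form $g = -x_1 B|_{\gm_1} - x_2 B|_{\gm_2}$ with $x_1, x_2 > 0$. Since (as proved earlier in the paper) every invariant Hermitian metric on a flag manifold is balanced, the Nomizu correspondence expresses the Bismut torsion three-form $T^b$ and the connection tensor $\Lambda(X)Y = (\nabla^b_X Y)_o - [X,Y]_\gm$ as explicit $\ad(\gh)$-equivariant polynomial tensors on $\gm$ whose coefficients involve the bracket projections $[\gm_i, \gm_j]_{\gm_k}$ and $[\gm_i, \gm_j]_\gh$.

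Next I would expand the BTP condition $\nabla^b T^b = 0$, which at the origin on invariant vector fields becomes
\begin{equation*}
T^b(\Lambda(X)Y, Z, W) + T^b(Y, \Lambda(X)Z, W) + T^b(Y, Z, \Lambda(X)W) = 0
\end{equation*}
for all $X, Y, Z, W \in \gm$. The $\ad(\gh)$-equivariance of $T^b$ kills most components a priori, and what remains is a small collection of scalar polynomial equations in $(x_1, x_2)$ whose coefficients are contractions of iterated brackets $[\gm_i, [\gm_j, \gm_k]]$ against the Killing form.

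Finally I would perform a case analysis based on the bracket structure, splitting according to whether $[\gm_i, \gm_i] \subset \gh$ for some $i \in \{1, 2\}$ or not, and using the explicit list of families in class $\mathcal C$ recorded in Table \eqref{table}. In each case the surviving BTP equations depend only on the ratio $t = x_2 / x_1$, and a direct calculation should show that the admissible values of $t$ are exactly $t = 1$ (giving a positive multiple of the $-B$ metric, which is BTP by the naturally-reductive argument already used for the Chern-flat case) and the unique ratio that makes $d\omega = 0$ for the given $J$ (giving the K\"ahler alternative, consistent with the Borel--Koszul description of invariant K\"ahler metrics on $F$). The main obstacle is carrying out the combinatorics uniformly across the families listed in Table \eqref{table}; the key simplification is that under the two-summand hypothesis both $\gm_1$ and $\gm_2$ remain irreducible already under the semisimple part $H^{ss}$ of $H$, which rigidifies the $\ad(\gh)$-equivariant tensors enough to collapse the BTP system to a finite set of scalar equations in $t$ that can be solved explicitly, with no room for a third solution branch.
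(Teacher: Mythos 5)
Your overall framework (a two-parameter family $g=-x_1B|_{\gm_1}-x_2B|_{\gm_2}$, with the {\rm BTP} condition reduced to polynomial constraints on $t=x_2/x_1$) is compatible with what the paper does, but the proposal has a genuine gap at its decisive step: the assertion that ``a direct calculation should show'' that only $t=1$ and the K\"ahler ratio survive is exactly the content of the theorem, and your proposed route to it --- a case-by-case verification of the bracket combinatorics over all the families in Table \ref{table}, several of which are infinite series --- is neither carried out nor clearly feasible in uniform terms. You yourself flag this as ``the main obstacle,'' and the auxiliary claim you invoke to overcome it (that $\gm_1$ and $\gm_2$ remain irreducible already under the semisimple part of $H$) is not justified and is not what actually rigidifies the problem.

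The paper avoids any case analysis by two inputs missing from your plan. First, a uniform structural fact: writing $\gm_j^{1,0}$ as the span of the root spaces $\gk_\a$ with $n_{i_o}=j$ in the coordinates of $\a$ along the simple roots, the simplicity of $\gk$ forces $[\gm_1^{1,0},\gm_1^{1,0}]=\gm_2^{1,0}\neq\{0\}$ (otherwise $\gm_1^{\mathbb C}+\gh^{\mathbb C}$ would be a proper ideal), so there exist positive roots $\a,\b$ with $\gk_\a,\gk_\b\subset\gm_1^{1,0}$ and $\a+\b\in R_\gm^+$ lying in $\gm_2^{1,0}$. Second, a pointwise computation in the Weyl basis (Lemma \ref{Lemma1}): evaluating $\nabla^b_{E_{-\a}}T^b(E_\a,E_\b)=0$ and $\nabla^b_{E_{-\b}}T^b(E_\a,E_\b)=0$ yields the trichotomy $g_{\a+\b}=g_\a+g_\b$, or $g_{\a+\b}=g_\a$, or $g_{\a+\b}=g_\b$. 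Since Schur's lemma makes $g$ constant on each irreducible summand, this reads $x_2=2x_1$ (the K\"ahler ratio) or $x_2=x_1$ (a multiple of $-B$), with no reference to which entry of Table \ref{table} one is in. If you want to salvage your plan, replace the table-driven case analysis by this single root-pair argument; the rest of your setup (balancedness, the Nomizu operator, $\ad(\gh)$-equivariance) then serves only to justify the explicit formulas for $\Lambda^b$ and $T^b$ on root vectors.
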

An analogous characterization can also be proved for the class of full flag manifolds for the simple group $\SU(n)$, namely we establish the following
\begin{theorem}\label{SU} An invariant {\rm BTP} Hermitian metric on the flag manifold $F=\SU(n\!+\!1)/\T^n$ endowed with an invariant complex structure is either K\"ahler or a multiple of the invariant metric induced by $-B$, where $B$ denotes the Cartan-Killing form of the Lie algebra $\mathfrak{su}(n\!+\!1)$.
\end{theorem}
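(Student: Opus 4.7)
The plan is to exploit the very rich combinatorial structure of the $A_n$ root system, writing the BTP condition as a system of polynomial equations on the metric parameters $\{x_\alpha\}_{\alpha\in\Delta^+}$ and then showing by induction on the height of roots that the only solutions are either the Kähler ones or the scalar multiples of the Cartan-Killing metric.

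First, I would set up the algebraic framework. Fix a maximal torus $\gt\subset\su(n+1)$ and the corresponding root system $\Delta$ of type $A_n$. The reductive complement $\gm=\bigoplus_{\alpha\in\Delta^+}\gm_\alpha$ decomposes into pairwise non-equivalent $2$-dimensional root planes, so any invariant Hermitian metric is uniquely encoded by positive numbers $x_\alpha$ with $g|_{\gm_\alpha}=x_\alpha\,(-B)|_{\gm_\alpha}$. Similarly, an invariant complex structure $J$ corresponds to a choice of signs $\epsilon_\alpha\in\{\pm1\}$ (with $\epsilon_{-\alpha}=-\epsilon_\alpha$) satisfying the integrability condition on every root triple $\alpha+\beta=\gamma$. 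Using a Weyl-type basis $\{E_\alpha\}$ and the fact that for $A_n$ the structure constants $N_{\alpha,\beta}$ are all $\pm 1$ (root strings have length one), I would derive an explicit formula for the Bismut torsion: for $\alpha+\beta=\gamma\in\Delta$, the component $T^b(E_\alpha,E_\beta)$ is a coefficient $c(\alpha,\beta)$ times $N_{\alpha,\beta}E_\gamma$, where $c(\alpha,\beta)$ depends polynomially on $x_\alpha,x_\beta,x_\gamma$ and the relevant signs.

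Next, I would impose $\nabla^b T^b=0$. Expanding on each triple of root vectors, this yields, for every ordered triple $(\alpha,\beta,\gamma)$ of positive roots with $\alpha+\beta=\gamma$, a scalar equation $R_{\alpha,\beta,\gamma}(x,\epsilon)=0$. The key combinatorial point is that in $A_n$ every non-simple positive root $\alpha_{ij}=e_i-e_j$ ($j>i+1$) admits many decompositions $\alpha_{ij}=\alpha_{ik}+\alpha_{kj}$ for $i<k<j$. All these equations must hold simultaneously, so one obtains an overdetermined system. By a case analysis on the signs $\epsilon$ allowed by integrability, I expect each triple equation to force the dichotomy: either $x_\alpha=x_\beta=x_\gamma$, or $x_\gamma=x_\alpha+x_\beta$ (Koszul-type additivity coming from the Kähler relations on $F$).

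Finally, I would propagate these local dichotomies globally. Running an induction on the height $j-i$ of the root $\alpha_{ij}$, the base case $j-i=2$ uses the $A_2$ subsystems inside $A_n$ (i.e.\ $\SU(3)$-subgroups through consecutive simple roots) and directly yields one of the two alternatives for each such triple. The inductive step must then show that a "mixed" situation, in which some triples satisfy the equality branch and others the additivity branch, is inconsistent with the overdetermined system. This is precisely the main obstacle: tracking the propagation of the branches across all $A_2$ sub-triples with overlapping simple roots, and doing so uniformly in the complex structure $J$, whose integrable sign patterns are numerous though combinatorially rigid. Concretely, one has to rule out hybrid configurations where, say, $x_{ij}=x_{ik}+x_{kj}$ holds for one choice of $k$ while $x_{ij}=x_{ik'}=x_{k'j}$ holds for another. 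I expect that a careful comparison of the equations coming from two different decompositions of the same root, together with the integrability constraint on $\epsilon$, forces a single branch for all roots, completing the proof.
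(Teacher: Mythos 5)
Your overall architecture --- encode the metric by positive numbers $x_\alpha$, derive from BTP a per--triple dichotomy ($x_\gamma=x_\alpha+x_\beta$ or $x_\alpha=x_\beta=x_\gamma$ whenever $\alpha+\beta=\gamma$), then propagate by induction on height --- is the same as the paper's, and the dichotomy itself is correct: it is Lemma \ref{Lemma1}(1), applicable because $A_n$ is simply laced, so $\alpha-\beta\notin R$ whenever $\alpha+\beta\in R$. The gap is in the step you yourself flag as the main obstacle. The system you propose to work with, namely one scalar equation $R_{\alpha,\beta,\gamma}(x,\epsilon)=0$ for each additive triple $\alpha+\beta=\gamma$, is strictly weaker than the BTP condition and does not exclude hybrid configurations. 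Concretely, on $\SU(4)/\T^3$ with the standard complex structure take $x_{\eps_{1,2}}=x_{\eps_{2,3}}=x_{\eps_{3,4}}=x_{\eps_{1,3}}=x_{\eps_{2,4}}=1$ and $x_{\eps_{1,4}}=2$. Every additive triple then satisfies one branch of the dichotomy: $(\eps_{1,2},\eps_{2,3},\eps_{1,3})$ and $(\eps_{2,3},\eps_{3,4},\eps_{2,4})$ lie in the ``all equal'' branch, while both decompositions of $\eps_{1,4}$, namely $\eps_{1,2}+\eps_{2,4}$ and $\eps_{1,3}+\eps_{3,4}$, lie in the additive branch and agree with each other. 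So comparing the equations coming from two different decompositions of the same root --- your proposed mechanism for ruling out hybrids --- produces no contradiction here, yet this metric is neither K\"ahler nor a multiple of $-B$.

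What is missing is the rest of the BTP system: $\nabla^b_{E_\gamma}T^b(E_\alpha,E_\beta)=0$ must hold for \emph{every} $\gamma\in R_\gm$, not only for $\gamma=-\alpha$ or $\gamma=-\beta$ (which is what produces the dichotomy). The paper's Proposition \ref{simply}(a) extracts, from the directions $\gamma$ with $\beta+\gamma\in R$ and $\alpha+\beta+\gamma\in R$ (and $\alpha+\gamma\notin R$, automatic in the simply laced case), a relation tying together the six values $x_\alpha, x_\beta, x_\gamma, x_{\alpha+\beta}, x_{\beta+\gamma}, x_{\alpha+\beta+\gamma}$; applied to $\alpha=\eps_{1,2}$, $\beta=\eps_{2,3}$, $\gamma=\eps_{3,4}$ it forces $x_{\eps_{1,4}}=1$ and kills the configuration above. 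Once these ``transverse'' derivative equations are added (together with the companion statement, Proposition \ref{simply}(b), about decomposing a root that already sits in a non-additive triple), your height induction closes essentially as in the paper's Lemmas \ref{simpleroots} and \ref{height}; without them the inductive step cannot be completed.
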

We note that a general statement for full flag manifolds of a general compact simple Lie group seems to be out of reach at this very moment. Turning then to the case of a $C$-space, namely the case of a compact simply-connected complex manifold which is acted on transitively a compact group of biholomorphisms (see e.g.\,\cite{A}), we restrict ourselves to the fundamental subclass given by a compact even-dimensional semisimple Lie group endowed with a left-invariant complex structure, a so-called Samelson complex structure. It is well-known that such a compact Lie group fibers holomorphically onto a full flag manifold endowed with an invariant complex structure via the Tits fibration (see \cite{A},\cite{Samelson}) with a maximal torus as typical fibre. These compact homogeneous complex spaces are  particular cases of Wang spaces, namely compact homogeneous spaces $G/K$, where $G$ is a compact semisimple Lie group and $K$ is a compact subgroup whose semisimple part coincides with the semisimple part of the centralizer of some torus in $G$. Given a compact even-dimensional Lie group $G$ endowed with a bi-invariant metric $g_o$ one can consider any Samelson complex structure $J$ w.r.t. which $g_o$ is Hermitian and the corresponding Bismut connection of $(G,g_o,J)$ turns out to have closed and parallel torsion as well as vanishing curvature. Viceversa it was proved in \cite{WYZ} that a compact Hermitian manifold whose Bismut connection is flat is finitely covered by a compact Lie group equipped with a bi-invariant metric and a compatible Samelson structure. It is worth mentioning that compact Lie groups with a Samelson structure provide a distinguished ambient where to find special metrics as SKT or CYT (see e.g.\,\cite{FG}). We provide a characterization of Hermitian BTP metrics on such spaces under the additional hypothesis that the metric is invariant under the right translations by the maximal torus which determines the Samelson structure. More precisely we prove the following
\begin{theorem}\label{Sam} Let $G$ be a compact even-dimensional simple Lie group endowed with a Samelson left-invariant complex structure $J$. Let $g$ be a left-invariant $J$-Hermitian metric on $G$ which is {\rm BTP}. If $g$ descends to an invariant metric $\bar g$  on the flag manifold $Q$ which is the base of the Tits fibration $j:G\to Q$, then $\bar g$ is induced by the Cartan-Killing form of $\gg$ up to a multiple. \par
Viceversa, given the metric $\bar g$ induced on $Q$ by the Cartan-Killing form of $\gg$, then there exists a {\rm BTP} left-invariant $J$-Hermitian metric $g$ on $G$ so that $j:(G,g)\to (Q,\bar g)$ is a Riemannian submersion.
\end{theorem}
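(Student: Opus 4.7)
For the converse direction, the bi-invariant metric $g_o$ on $G$ induced by $-B$ is a left-invariant $J$-Hermitian metric for any Samelson $J$, since the $B$-orthogonal root-space decomposition
\[
\gg \;=\; \gt \,\oplus\, \bigoplus_{\alpha\in R^+}\gg_\alpha^{\mathbb R}
\]
is $J$-stable summand by summand and $J$ acts $B$-orthogonally. As recalled in the introduction, the Bismut connection of $(G,g_o,J)$ is flat with parallel torsion, so $g_o$ is in particular BTP. Right-$T$-invariance of $g_o$ turns the Tits fibration $j:(G,g_o)\to (Q,\bar g)$ into a Riemannian submersion with $\bar g$ the invariant metric on $Q$ induced by $-B$, and hence $g=g_o$ furnishes the required example.

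For the forward direction, fix the maximal torus $T$ defining the Samelson structure $J$ and write $\gg=\gt\oplus\gm$ with $\gm=\bigoplus_{\alpha\in R^+}\gg_\alpha^{\mathbb R}$. The assumption that $g$ descends to $Q$ forces $g$ to be right-$T$-invariant, and hence block-diagonal with respect to this decomposition because the $T$-modules $\gg_\alpha^{\mathbb R}$ are pairwise inequivalent. Combined with $J$-Hermiticity this implies $g|_{\gg_\alpha^{\mathbb R}}=c_\alpha(-B)|_{\gg_\alpha^{\mathbb R}}$ for positive scalars $c_\alpha=c_{-\alpha}$, while $g|_\gt$ remains an arbitrary $J|_\gt$-Hermitian inner product; the descended metric $\bar g$ is then diagonal in the isotropy decomposition with weights $c_\alpha$, so the theorem is equivalent to the statement $c_\alpha=c_\beta$ for all roots.

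The next step is to expand the Bismut torsion 3-form $T^b=-d^c\omega$ of $(G,g,J)$ as a function of the $c_\alpha$ and the Chevalley structure constants $N_{\alpha,\beta}$, using left-invariance to reduce $d\omega$ to a purely algebraic expression in the brackets of root vectors. Imposing $\nabla^b T^b=0$ at the identity then yields a system of algebraic identities among the $c_\alpha$, one for each triple $(\alpha,\beta,\alpha+\beta)$ of roots whose bracket is nonzero. The main obstacle will be converting these relations into the rigidity that every $c_\alpha$ equals a common constant. My plan is to exploit the irreducibility of the root system of the simple Lie algebra $\gg$: any two roots can be joined by a chain whose consecutive sums are again roots, and the BTP identities propagate the equality of consecutive $c$'s along each link of the chain. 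The argument is in the spirit of Theorems \ref{flag2summ} and \ref{SU} but must be adapted to an arbitrary simple $\gg$; a crucial simplification is that the descent hypothesis causes the free parameters in $g|_\gt$ to drop out of the relevant identities, leaving a self-contained system in the $c_\alpha$'s. Once $c_\alpha=\lambda$ is established uniformly, the conclusion $\bar g=\lambda(-B)|_\gm$ follows.
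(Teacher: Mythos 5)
There is a genuine gap in the forward direction. You propose to impose $\nabla^bT^b=0$ only through the identities attached to triples $(\alpha,\beta,\alpha+\beta)$ of roots with $N_{\alpha,\beta}\neq 0$, and you assert that the free parameters of $g|_{\gt}$ ``drop out, leaving a self-contained system in the $c_\alpha$'s.'' That system is exactly the one governing invariant metrics on the base $Q$ (Lemma \ref{Lemma1} of the paper), and it does \emph{not} force $c_\alpha$ to be constant: it admits the whole family of solutions $c_{\alpha+\beta}=c_\alpha+c_\beta$, i.e.\ the invariant K\"ahler metrics on $Q$, which are trivially BTP downstairs. The theorem's conclusion that only the Cartan--Killing metric survives comes precisely from the torsion components in the fiber direction: on $G$ one has $T^b(H,E_\beta)=\frac{g(H,H_\beta)}{g_\beta}E_\beta$ and $T^b(E_\alpha,E_{-\alpha})=-H_\alpha$ for $H\in\gt^{\mathbb C}$, and the condition $\nabla^b_{E_\alpha}T^b(H,E_\beta)=0$, together with the nondegeneracy of $g|_{\gt}$ and the non-proportionality of $H_\alpha,H_\beta$, is what excludes the branch $c_{\alpha+\beta}=c_\alpha+c_\beta$ (this is Lemma \ref{altern} in the paper). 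So the torus part of the metric does not merely ``drop out'': the $\gt$-direction torsion is the decisive input, and without it your chain-propagation step cannot get started, since Lemma \ref{Lemma1} alone leaves two branches at every link. Once that exclusion is in hand, the propagation is carried out in the paper by equating $c$ on adjacent simple roots via connectedness of the Dynkin diagram and then inducting on the height of a root; your ``chain of roots'' idea is of the same flavour but would need this two-step form to be airtight.

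The converse also has a gap: you claim the bi-invariant metric $-B$ is $J$-Hermitian for \emph{any} Samelson structure because $J$ acts $B$-orthogonally, but on the torus summand $J|_{\gt}$ is an arbitrary complex structure on $\gt$ and need not be orthogonal for $-B|_{\gt}$; compatibility with the bi-invariant metric is an extra hypothesis, not automatic. The paper's construction avoids this by taking an arbitrary $J|_{\gt}$-Hermitian inner product on $\gt$ and $-B$ on the root spaces; the price is that one can no longer invoke Bismut-flatness of the bi-invariant metric and must verify $\nabla^bT^b=0$ by direct computation (which works, and Remark \ref{rmkSam} notes the BTP property is independent of the fiber metric).
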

In the last section we more specifically address the question when a {\rm BTP} metric turns out to be {\rm BAS} and we first show that the natural Hermitian metrics on isosceles Hopf manifolds and on Calabi Eckmann manifolds are indeed {\rm BAS}. We then provide an explicit example of a compact, non-simply connected homogeneous $4$-fold $M$ carrying a BTP metric which is not  {\rm BAS} as it is shown to be not naturally reductive w.r.t. any transitive action of isometric biholomorphisms. This naturally leads us to formulate the following
\begin{conjecture} \label{conj1.6}
Let $(M^n,g)$ be a compact Hermitian manifold whose universal cover is a homogeneous Hermitian manifold without K\"ahler de Rham factor. Assume $g$ is {\rm BTP}, and the manifold is one of the following
\begin{enumerate}
\item $M^n$ has finite fundamental group, or
\item the universal cover of $(M^n,g)$ is a Lie group equipped with a left-invariant metric and a compatible left-invariant complex structure.
\end{enumerate}
Then $g$ must be {\rm BAS}.
\end{conjecture}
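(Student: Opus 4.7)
The proof naturally splits along the two hypotheses of the conjecture, and I would attack each with a different technique, then attempt to unify.

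For case (2), where the universal cover is a simply-connected Lie group $G$ equipped with a left-invariant metric $g$ and compatible left-invariant complex structure $J$, the Bismut connection $\nabla^b$ is itself left-invariant and is encoded, at the identity, by a linear map $L: \gg \to \so(\gg,g)$ with the relations
\[
T^b(X,Y) = L_X Y - L_Y X - [X,Y], \qquad R^b(X,Y) = [L_X, L_Y] - L_{[X,Y]},
\]
and $L_X \circ J = J \circ L_X$. Both $T^b$ and $R^b$ are constant tensors on $G$, so the conditions $\nabla^b T^b = 0$ and $\nabla^b R^b = 0$ become purely algebraic identities on $\gg$: parallelism of $T^b$ reads as $L_X$ acting as a derivation of $T^b$, and similarly for $R^b$. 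My plan is to start from the BTP algebraic identity together with the second Bianchi identity
$\sum_{\rm cyc}(\nabla^b_X R^b)(Y,Z) = -\sum_{\rm cyc} R^b(T^b(X,Y),Z)$ (which under $\nabla^b T^b = 0$ gives a system of algebraic relations between $\nabla^b R^b$, $R^b$ and $T^b$), and to combine this with the Jacobi identity for $[\cdot,\cdot]$ and the flatness/non-flatness decomposition. The no-K\"ahler-factor hypothesis should be used to exclude an orthogonal direct sum decomposition of $\gg$ in which a K\"ahler summand would supply nontrivial $\nabla^b R^b$. The concrete model cases established in Theorems \ref{ThmChernflat} and \ref{Sam} (and the BAS conclusion in Proposition \ref{ChernflatBAS}) should serve both as templates and as sanity checks for the algebraic manipulations.

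For case (1), where $\pi_1(M)$ is finite, the universal cover $\tilde M$ is a compact simply-connected homogeneous Hermitian manifold. By the Hermitian Ambrose-Singer theorem of Sekigawa there exists a Hermitian connection $\tilde\nabla$ with $\tilde\nabla \tilde T = 0$ and $\tilde\nabla \tilde R = 0$. Write $A = \nabla^b - \tilde\nabla$, a tensor of type $(1,2)$ taking values in $\mathfrak{u}(T\tilde M)$. The idea is to combine two pieces of information: (i) $\nabla^b T^b = 0$ forces $A$ to satisfy $\tilde\nabla T^b = [A, T^b]$, where the right-hand side is the natural algebraic action; (ii) one can express $\nabla^b R^b$ as a universal polynomial in $\tilde R$, $A$, and the covariant derivatives $\tilde\nabla A$, in which the $\tilde R$ contribution already vanishes by Ambrose-Singer. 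The goal is then to show that the residual terms in $A$ are forced to be $\nabla^b$-parallel themselves, so that $\nabla^b R^b = 0$. Compactness of $\tilde M$ should allow a Bochner-type integration to control the Hermitian $L^2$-norm of $\nabla^b R^b$, using that the metric is Gauduchon (by the earlier result of \cite{ZhaoZ22}) and that the formal adjoint of $\nabla^b$ differs from that of the Levi-Civita connection by an explicit torsion term.

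The main obstacle, and the reason this is stated as a conjecture rather than a theorem, is the interlocking of the Hermitian structure with the homogeneous structure: BTP gives us strong control over $T^b$ but only indirect information about $R^b$, and the Ambrose-Singer connection from homogeneity is \emph{some} Hermitian connection with parallel tensors, not \emph{a priori} $\nabla^b$. Showing that one can always arrange $A = 0$ (or at least that $\nabla^b R^b$ vanishes regardless of $A$) seems to require either a classification-style case analysis (as in the proofs of Theorems \ref{flag2summ}, \ref{SU} and \ref{Sam}, where the setting is reduced to flag manifolds or Samelson structures) or a substantially new tensorial identity relating BTP to BAS. The explicit compact $4$-fold described in the final section, which is BTP but not BAS and has infinite $\pi_1$ and is not a Lie group, shows that both restrictions in the conjecture are genuinely needed and pinpoints exactly where any attempted uniform argument must invoke hypothesis (1) or (2); incorporating these hypotheses in a non-case-wise manner is where I expect the essential difficulty to lie.
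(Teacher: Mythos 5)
The statement you are addressing is stated in the paper as a \emph{conjecture}, and the paper offers no proof of it in the generality stated: the authors only supply supporting evidence, namely Proposition \ref{prop5.3} (case (1) restricted to $n=3$) and Proposition \ref{prop5.5} (case (2) restricted to nilpotent groups with nilpotent $J$), together with a family of examples. Your text is likewise not a proof but a research outline, and to your credit you say so explicitly in the final paragraph. Still, it is worth naming precisely where the outline stops short. In case (2) the reduction to algebraic identities on $\gg$ via the Nomizu operator $L$ is correct, but the second Bianchi identity only controls the \emph{cyclic sum} $\sum_{\rm cyc}(\nabla^b_XR^b)(Y,Z)=-\sum_{\rm cyc}R^b(T^b(X,Y),Z)$, whose right-hand side is not obviously zero under BTP; extracting the full tensor $\nabla^bR^b$ from its cyclic sum is exactly the content of the conjecture and is nowhere derived. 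In case (1) the step ``the residual terms in $A$ are forced to be $\nabla^b$-parallel'' is again the conjecture itself restated, and the proposed Bochner argument is not accompanied by any Weitzenb\"ock-type identity that would make $\nabla^bR^b$ accessible to integration by parts; the Gauduchon property of $g$ controls $\partial\bar\partial\omega^{n-1}$, not curvature derivatives.

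It is also worth contrasting your proposed route with what the paper actually does in the two special cases it can handle, since the methods are quite different from the uniform tensorial approach you sketch. For $n=3$ the paper splits into balanced and non-balanced cases and leans heavily on the classification results and admissible frames of \cite{ZhaoZ22}: in the non-balanced case it shows $\theta^b$ is diagonal in an admissible frame and then identifies the few surviving curvature components with eigenvalues of the globally defined Ricci forms $\rho^{b(1)}$ and $\rho^{b(3)}$, which homogeneity forces to be constant; in the balanced case it runs through the three values of the rank $r_B$ of the $B$-tensor and verifies BAS for each by explicit computation. For nilmanifolds with nilpotent $J$ the paper uses the special frame of \cite[Proposition 4.4]{ZhaoZ22} to write down all structure constants, observes that $\theta^b$ is diagonal and $R$ has constant components, and checks $\nabla^bR=0$ directly. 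Both arguments are classification-driven and frame-explicit rather than abstract; if you want to make progress on the conjecture, the realistic next targets are probably the cases the paper leaves open (e.g.\ case (1) for $n=4$, or case (2) beyond the nilpotent and reductive settings), attacked with the same kind of structure theory rather than a single unifying identity.
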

Finally we establish two results to support the conjecture, namely we prove it in some special cases using ad hoc methods. More precisely, in case of $3$-folds we prove
\begin{proposition} \label{prop5.3}
Let $(M^3,g)$ be a compact homogeneous Hermitian manifold without K\"ahler de Rham factor and with finite $\pi_1(M)$. If $g$ is {\rm BTP}, then it is {\rm BAS}.
\end{proposition}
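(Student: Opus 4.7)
Since $\pi_1(M)$ is finite, the universal covering $\tilde M\to M$ is finite, so $\tilde M$ is a compact, simply connected, homogeneous Hermitian $3$-fold that is still {\rm BTP} and still has no K\"ahler de Rham factor. Being {\rm BAS} is a local property, so it is enough to prove the statement under the additional assumption that $M$ is simply connected.

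In complex dimension $3$, the list of compact simply-connected homogeneous Hermitian manifolds with no K\"ahler de Rham factor is very short. Any K\"ahler example would itself be a K\"ahler de Rham factor and is therefore excluded. Since every compact complex surface with finite fundamental group is K\"ahler, no $2$-complex-dimensional de Rham factor can occur either. Finally, $S^6=G_2/\SU(3)$ admits no invariant complex structure. A case-by-case inspection of the classical classification of compact simply-connected homogeneous complex $3$-folds leaves essentially two families:
\begin{enumerate}
\item[(i)] the full flag manifold $F=\SU(3)/\T^2$ with an invariant complex structure and a non-K\"ahler invariant Hermitian metric;
\item[(ii)] the compact semisimple Lie group $\SU(2)\times \SU(2)\cong S^3\times S^3$ endowed with a Samelson left-invariant complex structure and a compatible left-invariant Hermitian metric.
\end{enumerate}

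In case (i), Theorem \ref{SU} specialized to $n=2$ says that every invariant {\rm BTP} Hermitian metric on $\SU(3)/\T^2$ is either K\"ahler (excluded by hypothesis) or a positive multiple of the normal metric $g_o$ induced by $-B$. For $g=c\,g_o$, the Bismut connection coincides with the canonical connection of the reductive decomposition $\gk=\gh\oplus\gm$, which is an Ambrose-Singer connection as recalled right after Definition \ref{defAS}; hence $g$ is {\rm BAS}. In case (ii) I would run a computation parallel to the proof of Theorem \ref{Sam}, adapted to the semisimple (rather than simple) setting, to show that any left-invariant $J$-Hermitian {\rm BTP} metric on $\SU(2)\times \SU(2)$ is, up to a scaling on each simple factor, a bi-invariant Cartan-Killing metric. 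For this metric $\nabla^b$ is Bismut flat with parallel torsion, so $\nabla^b R^b = 0$ trivially and $g$ is {\rm BAS}.

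The main obstacle is the technical core of case (ii): one has to rule out, by a direct structure-constant computation in $\su(2)\oplus \su(2)$, the existence of non-bi-invariant left-invariant $J$-Hermitian {\rm BTP} metrics. This is the rank-two semisimple analogue of the mechanism behind Theorem \ref{Sam}, and the key intermediate step is to show that such a metric automatically enjoys right-$\T^2$-invariance, so that it descends along the Tits fibration $\SU(2)\times \SU(2)\to \mathbb{CP}^1\times \mathbb{CP}^1$; once this is established one concludes by the same reasoning as in the proof of Theorem \ref{Sam}.
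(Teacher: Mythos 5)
Your strategy --- reduce to the universal cover, classify the compact simply--connected homogeneous complex $3$-folds via Tits, and treat each model separately --- is genuinely different from the paper's proof, which never invokes that classification: there the argument splits according to whether $g$ is balanced, uses the admissible frames of \cite{ZhaoZ22} in the non-balanced case to show that the few surviving Bismut curvature components are constants (by testing the invariant Bismut Ricci forms against $\omega$), and in the balanced case appeals to the classification of balanced non-K\"ahler {\rm BTP} threefolds by the rank of the $B$-tensor. As written, your route has two genuine gaps. The first is that your list of models is incomplete, because the exclusion ``any K\"ahler example would itself be a K\"ahler de Rham factor'' conflates the complex manifold being of K\"ahler type with the given homogeneous metric being K\"ahler. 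The flag manifolds ${\mathbb C}P^3=\mathrm{Sp}(2)/(\mathrm{Sp}(1)\times \mathrm{U}(1))$ and the quadric $Q_3=\mathrm{SO}(5)/(\mathrm{SO}(3)\times \mathrm{SO}(2))$ are K\"ahler-type threefolds which nonetheless carry irreducible \emph{non-K\"ahler} invariant Hermitian metrics (each has two isotropy summands and appears in the list of class $\mathcal C$ spaces), and these satisfy every hypothesis of the proposition. This omission happens to be repairable --- Theorem \ref{flag2summ} shows that an invariant {\rm BTP} metric there is K\"ahler or a multiple of the Cartan--Killing metric, hence {\rm BAS} --- but the cases must be added and the exclusion argument redone.

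The second gap is more serious: case (ii) is not a proof. You explicitly defer its ``technical core,'' namely the claim that a left-invariant $J$-Hermitian {\rm BTP} metric on $\SU(2)\times\SU(2)$ is automatically invariant under right $\T^2$-translations, so that an analogue of Theorem \ref{Sam} (which is stated only for $G$ simple and only under the projectability hypothesis) would apply. That claim is exactly the hard point, and nothing in the paper supplies it: the Remark following the proof of Proposition \ref{prop5.3} singles out $\SU(2)\times\SU(2)$ precisely because for it ``the Hermitian metric is supposed to be simply left-invariant (cf.\ Theorem \ref{Sam})'', i.e.\ Theorem \ref{Sam} does not cover this situation, and the paper's own proof circumvents it entirely through the balanced/non-balanced dichotomy and the structure theory of {\rm BTP} threefolds. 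Until you either prove the asserted right-invariance or replace it by a direct structure-constant computation in $\su(2)\oplus\su(2)$ (or fall back on the paper's frame-based argument), the proof of the proposition is incomplete.
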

We then consider the case of a nilpotent Lie group with a special left-invariant complex structure, which is called {\it nilpotent} as introduced in \cite{CFGU}, and we obtain the following result
\begin{proposition} \label{prop5.5}
Let $G$ be a nilpotent Lie group equipped with a left-invariant nilpotent complex structure $J$ and a compatible left-invariant metric $g$. If  $g$ is {\rm BTP}, then it is {\rm BAS}.
\end{proposition}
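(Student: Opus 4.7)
The plan is to translate the statement into a purely algebraic problem on the Lie algebra $\gg$ of $G$. Since $g$, $J$ and $\nb^b$ are all left-invariant, the Bismut connection is encoded by the linear map $\rho\colon\gg\to\gu(\gg,g,J)$ with $\rho(X)Y=(\nb^b_XY)|_e$, and its torsion and curvature at the identity are
\[
T^b(X,Y)=\rho(X)Y-\rho(Y)X-[X,Y],\qquad R^b(X,Y)=[\rho(X),\rho(Y)]-\rho([X,Y]).
\]
A left-invariant tensor is $\nb^b$-parallel if and only if its value at $e$ is annihilated by the derivation action of $\rho(X)$ for every $X\in\gg$; thus both BTP and BAS become purely algebraic conditions on $(\gg,g,J,\rho)$, and the task is to deduce $\rho(X)\cdot R^b=0$ for all $X\in\gg$ from $\rho(X)\cdot T^b=0$ for all $X\in\gg$.

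By the definition of nilpotent complex structure in \cite{CFGU} there is an ascending filtration of $\gg$ by $J$-invariant ideals $0=\ga_0\subset\ga_1\subset\cdots\subset\ga_s=\gg$ satisfying $[\gg,\ga_i]\subset\ga_{i-1}$, or equivalently a $(1,0)$-coframe $\{\omega^1,\ldots,\omega^n\}$ with $d\omega^i\in\Lambda^2\langle\omega^1,\bar\omega^1,\ldots,\omega^{i-1},\bar\omega^{i-1}\rangle$. First I would apply Gram--Schmidt compatibly with this filtration to produce a unitary $(1,0)$-basis $\{Z_1,\ldots,Z_n\}$ of $\gg^{1,0}$ still adapted to it, so that the structure constants are strictly triangular in the sense that $[Z_i,Z_j]$ and $[Z_i,\bar Z_j]$ lie in $\ga_{\max(i,j)-1}^{\mathbb C}$; in particular $\ga_1$ is central in $\gg$.

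With this adapted basis in place, I would write the Bismut Christoffel symbols via the standard skew-torsion formula $g(\nb^b_XY,Z)=g(\nb^{LC}_XY,Z)+\tfrac12 T^b(X,Y,Z)$, and translate the BTP hypothesis into explicit algebraic identities relating the entries of $\rho$ to the structure constants of $\gg$. The verification of $\rho(X)\cdot R^b=0$ then proceeds by induction on the filtration depth $s$. The base case exploits the centrality of $\ga_1$: the triangular shape of the structure constants makes most components of $R^b$ involving $\ga_1$ vanish, and the BTP relations control the remainder. For the inductive step, the quotient $\gg/\ga_1$ inherits a nilpotent complex structure of depth $s-1$ and a BTP Hermitian structure, and the skew-torsion Bianchi identities (as recorded in \cite{ZhaoZ22}) allow one to express $(\rho(X)\cdot R^b)$ on $\gg$ in terms of its analog on $\gg/\ga_1$, to which the inductive hypothesis applies. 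The main obstacle I anticipate is the combinatorial bookkeeping needed to push this reduction through cleanly; a potentially cleaner route is to invoke the structural description of nilpotent Hermitian Lie algebras of BTP type obtained in \cite{ZhaoZ19Nil}, which should reduce the problem to an explicit (possibly parametric) list of normal forms where $\nb^b R^b=0$ can be verified directly.
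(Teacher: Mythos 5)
Your general framework --- encoding everything in the Nomizu-type map $\rho$ and reducing both {\rm BTP} and {\rm BAS} to algebraic identities on $(\gg,g,J,\rho)$ --- is sound, and your closing remark, that one should invoke a structural normal form for {\rm BTP} nilmanifolds from the Zhao--Zheng papers, is in fact exactly what the paper does (it cites Proposition~4.4 of \cite{ZhaoZ22} rather than \cite{ZhaoZ19Nil}). But the route you actually develop, induction on the depth of the ascending filtration $0=\ga_0\subset\ga_1\subset\cdots\subset\ga_s=\gg$, has a genuine gap at the inductive step. You assert that $\gg/\ga_1$ inherits a {\rm BTP} Hermitian structure and that $\rho(X)\cdot R^b$ on $\gg$ can be expressed in terms of its analogue on the quotient via Bianchi identities. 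Neither claim is justified, and neither is routine: the Bismut connection of the quotient metric is not the projection of the Bismut connection of $\gg$ (there are O'Neill-type correction terms coming from the components of brackets of horizontal vectors along $\ga_1$), so parallelism of the torsion does not obviously descend, and the curvatures of $\gg$ and of $\gg/\ga_1$ differ by quadratic torsion terms that your sketch never controls. The base case (``the triangular shape makes most components of $R^b$ involving $\ga_1$ vanish, and the BTP relations control the remainder'') is likewise a placeholder rather than an argument.

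The paper sidesteps all of this by quoting \cite[Proposition 4.4]{ZhaoZ22}: for a nilpotent $J$, the {\rm BTP} condition forces the existence of a left-invariant unitary coframe $\varphi$ with $d\varphi_i=0$ for $1\le i\le r$ and $d\varphi_\alpha=\sum_{i\le r} Y_{\alpha i}\,\varphi_i\wedge\overline{\varphi}_i$ for $\alpha>r$, with constant $Y_{\alpha i}$. This is a strong rigidity statement: it says $G$ is $2$-step nilpotent and $J$ is abelian, so the filtration depth you want to induct on is already forced to be $2$, and the only surviving structure constants are the $Y_{\alpha i}$. From there the Bismut connection matrix $\theta^b$ is diagonal, the nonzero curvature components are explicit constants built from the $Y_{\alpha i}$, and $\nabla^bR=0$ (equivalently $\nabla^bR^b=0$, by \eqref{curv}) is a short direct check. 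The missing ingredient in your proposal is precisely this rigidity theorem: without it the inductive scheme does not get off the ground, and with it no induction is needed.
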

The paper is organized as follows. In \S 2 we recall some basic facts about the Bismut connection of a Hermitian manifold together with some results in case of {\rm BTP} metrics. In \S3 we consider the case of compact {\rm BTP} Chern flat manifolds proving the structure result Theorem \ref{ThmChernflat} together with other results on such metrics, describing their uniqueness and proving that the Bismut curvature is parallel. In \S 4 we focus on the case of a compact simply-connected flag manifold endowed with an invariant {\rm BTP} Hermitian metric. After analyzing this condition in terms of the root system w.r.t. a fixed Cartan subalgebra, we give a proof of Theorems \ref{flag2summ} and \ref{SU}. In \S5 we address the problem of characterizing {\rm BTP} invariant metric on a Wang space, namely on a simply-connected compact complex homogeneous space under the action of a compact Lie group. We prove a structure result, namely Theorem \ref{Sam}, in the case of Wang spaces given by compact even-dimensional Lie groups with a Samelson structure, endowed with special {\rm BTP} metrics. In the last section \S6 we elaborate on the Conjecture \ref{conj1.6} providing examples and proving two results, namely Propositions \ref{prop5.3} and \ref{prop5.5}, which prove the conjecture in two remarkable special cases.

\vspace{0.3cm}
\section{Preliminaries}
Given a Hermitian manifold $(M,g,J)$, a linear connection $\nabla$ is called Hermitian if it leaves both $g$ and $J$ parallel. It is well-known that there exists a distinguished one-parameter family of Hermitian connections $\{{}^t\nabla\}_{t\in\mathbb R}$, firstly introduced by Gauduchon \cite{Gaud} and defined as
$$g({}^t\nabla_XY,Z) = g(\nabla^{\mbox{\tiny LC}}_XY,Z) - \frac{t-1}4 d\o(JX,JY,JZ) - \frac{t+1}4 d\o(JX,Y,Z),$$
where $X,Y,Z$ are vector fields, $\nabla^{\mbox{\tiny LC}}$ denotes the Levi-Civita connection and $\o(X,Y)=g(JX,Y)$ is the associated fundamental $2$-form.
Among these one can find the Chern connection $\nabla^c$ for $t=1$ and the Bismut connection $\nabla^b$ for $t=-1$, also known as the Strominger connection (see \cite{Strominger},\cite{Bismut}), given by
\begin{equation}\label{def}
g(\nabla^b_XY,Z) = g(\nabla^{\mbox{\tiny LC}}_XY,Z) +\frac 12 d\o(JX,JY,JZ).
\end{equation}
The Bismut connection can also be characterized by the property of being the unique Hermitian connection whose torsion tensor $T^b\in\Lambda^2TM^*\otimes TM$ is totally skew-symmetric, namely the tensor $(X,Y,Z)\mapsto g(T^b(X,Y),Z)$ is a $3$-form on $M$.

In this paper we are mainly concerned with the {\rm BTP} condition, namely we consider the case where the torsion $T^b$ is $\nabla^b$-parallel. This condition has been thoroughly investigated in \cite{ZhaoZ19Str}, \cite{ZhaoZ22},  where several important features have been highlighted. In order to describe some of the main properties of {\rm BTP} manifolds, motivating at the same time their study, we select a local unitary frame $\{e_1,\ldots,e_n\}$ of $T^{1,0}(M)$ ($n=\dim_{\mathbb C}M$) and we denote by $T$ the torsion of the Chern connection, so that
$$T(e_i,\overline e_j)=0,\qquad T(e_i,e_j) =  \sum_{k=1}^nT^k_{ij}e_k.$$
Note that the symbol $T^k_{ij}$ in \cite{ZhaoZ19Str} or \cite{ZhaoZ22} is half of our $T^k_{ij}$ here. For the Levi-Civita connection of the metric $g$, we may write $D e_j = \theta^1_{jk}e_k + \overline{\theta^2_{jk}}\overline e_k$ for some $1$-forms $\theta^1,\theta^2$. If we denote by $\theta,\theta^b$ the connection forms under the unitary frame $e$ for the Chern and Bismut connection, respectively, then we have $\theta^b = 2\theta^1-\theta$ (see \cite{WYZ}) and therefore a straightforward computation yields
\begin{equation}\label{Bism}
\nabla^b_{e_i}e_j = 2\nabla^{\mbox{\tiny LC}}_{e_i}e_j -\nabla^c_{e_i}e_j, \qquad  \nabla^b_{\overline e_i}e_j =2\nabla^{\mbox{\tiny LC}}_{\overline e_i}e_j -\sum_k T^i_{jk}\overline{e}_k - \nabla^c_{\overline e_i}e_j.
\end{equation}
From this it follows that the torsion $T^b$ of the Bismut connection can be expressed ad follows
$$ T^b(e_i,e_j) = -\sum_{k=1}^n T^k_{ij}e_k,\qquad T^b(e_i,\overline e_j) = \sum_{k=1}^n\left( T^j_{ik}\overline e_k - \overline{T^i_{jk}}e_k\right),$$
whence one can deduce (see \cite{ZhaoZ19Str}) that
\begin{equation}  \nabla^b T^b =0 \qquad {\rm{iff}}\qquad \nabla^b T = 0.\end{equation}
Again using \eqref{Bism} one can write down the {\rm BTP} condition as follows
\begin{equation}\label{BTP}
\left\{ \begin{split}
 \ \sum_{r=1}^n \big( T^{\ell}_{ri}T^r_{jk} + T^{\ell}_{rj}T^r_{ki} + T^{\ell}_{rk}T^r_{ij}  \big) \ = \ 0  \\
\  \sum_{r=1}^n \big( T^{j}_{ir} \overline{ T^k_{\ell r} } - T^{j}_{kr} \overline{ T^i_{\ell r} } + T^{r}_{ik} \overline{ T^r_{j\ell } } \big) \ = \ 0
 \end{split} \right.
\end{equation}
for any $1\leq i,j,k,\ell \leq n$.

As for the curvature of the connection $\nabla^b$, we first observe that on any Hermitian manifold, the difference  between the Bismut and Chern curvature is given by
\begin{equation}\label{R^b} R^b_{i\bar{j}k\bar{\ell}}- R_{i\bar{j}k\bar{\ell}} = T^{\ell}_{ik,\bar{j}} +\overline{T^k_{j\ell , \bar{i}} } + \sum_r\big( T^{\ell}_{ir} \overline{T^k_{jr} } -  T^{r}_{ik} \overline{T^r_{j\ell } } - T^{j}_{ir} \overline{T^k_{\ell r} } - T^{\ell}_{kr} \overline{T^i_{jr} }  \big)  \end{equation}
under any local unitary frame, where the index after comma stands for covariant derivatives with respect to $\nabla^b$. We remark here that  \eqref{R^b} is formula (3.2) in \cite[Lemma 3.1]{ZhaoZ22} and that the symbol $T^j_{ik}$ there denote half of the Chern torsion components, which explains the discrepancy in coefficients $2$ and $4$ on the right hand side. In particular when the torsion $T^b$ is $\nabla^b$-parallel, we immediately obtain that
\begin{equation}\label{curv}  \nabla^b R^b = 0 \qquad {\rm{iff}}\qquad \nabla^b R =0.\end{equation}
Moreover under the {\rm BTP} condition, it has been proved in \cite{ZhaoZ22} that the curvature $R^b$ has the following remarkable properties, namely
$$R^b(X,Y,Z,\overline W) = 0,\qquad R^b(X,\overline Y,Z,\overline W) = R(Z,\overline W,X,\overline Y)$$
for every $X,Y,Z,W$ tangent vectors of type $(1,0)$. We refer to \cite{ZhaoZ22} for a more detailed analysis of other curvature properties, also in relation to the condition Bismut K\"ahler-like condition.

As a final note, we briefly discuss the case when the curvature $R^b$ vanishes identically. Indeed, it has been proved in \cite{WYZ} that a compact Hermitian manifold whose Bismut connection is flat is locally biholomorphically isometric to an even dimensional Lie group $G$ equipped with a bi-invariant metric $g_o$ and a left-invariant complex structure compatible with $g_o$ (see \cite{WYZ} for a more detailed exposition). As the Bismut connection $\nabla^b$ of $g_o$ satisfies $\nabla^b_XY=0$ for every left-invariant vector field on $G$, we see that Bismut flat Hermitian manifold are actually {\rm BTP}.

\section{Compact Chern flat {\rm BTP} manifolds}

In this section, we  focus on the class of compact Chern flat manifolds and prove a characterization of such manifolds in the case where they can carry a metric whose Bismut connect has parallel torsion.
% want to find out what kind of compact Chern flat manifolds will be {\rm BTP}, namely, whose Bismut connection has parallel torsion. First let us prove the following result stated in the introduction:

\begin{proof}[{\bf Proof of Theorem \ref{ThmChernflat}}] Since $(M^n,g)$ is compact Chern flat, by Boothby's theorem \cite{Boothby}, the universal cover  of $M$ is a unimodular complex Lie group $G$ equipped with a compatible left-invariant metric. Denote by $\mathfrak g$ and $\langle , \rangle $ the corresponding Lie algebra and metric. We want to show that $\mathfrak g$ must be the orthogonal direct sum
\begin{equation}
 {\mathfrak g} = {\mathfrak a} \oplus {\mathfrak g}_1 \oplus \cdots \oplus {\mathfrak g}_k \label{eq:decom}
 \end{equation}
where $\mathfrak a$ is the center and each ${\mathfrak g}_i$ is a simple complex Lie algebra.  Note that in particular this says that  $\mathfrak g$ is reductive.

For this purpose, let us denote by $\nabla$, $\nabla^b$  the Chern and Bismut connection and $T$, $T^b$  their torsion tensors. The {\rm BTP}
 condition means that $\nabla^bT^b=0$, which is equivalent to $\nabla^bT=0$ (see \cite{ZhaoZ19Str}, \cite{ZhaoZ22}). Let $e$ be a local unitary $\nabla$-parallel frame of type $(1,0)$ tangent vectors on $M^n$. Lifting to the universal cover, we may extend $e$ into a global frame.
Boothby's theorem says that when $(M^n,g)$  is compact Chern flat one will have $\nabla T=0$, so the torsion components $T^j_{ik}$ under $e$ defined by $T(e_i,e_k)=\sum_j T^j_{ik}e_j$ are constants, and using them as (minus of the) structure constants one can make the universal cover $G$ of $M$ into a complex Lie group, where each $e_i$ is left-invariant. Hence  $e$ becomes a unitary basis of the complex Lie algebra ${\mathfrak g}$, and we have $C^j_{ik}=-T^j_{ik}$ where $C^j_{ik}$ are the structure constants defined by  $[e_i,e_k]=\sum_j C^j_{ik} e_j$.
 If we now use formulas \eqref{BTP}, we note that the first one, in terms of $C$, is just the Jacobi identity, so we will focus on the second one. Following a trick in  \cite{ZhaoZ22}, in the second equation of \eqref{BTP}, we interchange $i$ and $j$, and also interchange $k$ and $\ell$, and then take the complex conjugation, it gives us
$$ \sum_{r=1}^n \big( T^{\ell}_{kr} \overline{ T^i_{jr} } - T^{j}_{kr} \overline{ T^i_{\ell r} } + T^{r}_{ik} \overline{ T^r_{j\ell } } \big) \ = \ 0 .$$
Subtracting that from the second equation of \eqref{BTP}, we obtain
$$ \sum_{r=1}^n \big(  T^{j}_{ir} \overline{ T^k_{\ell r} } -  T^{\ell}_{kr} \overline{ T^i_{jr} }  \big) = 0, \ \ \ \ \forall \ 1\leq i,j,k,\ell \leq n. $$
Choosing $k=j$ and $\ell =i$, we get
\begin{equation}
\sum_r |T^j_{ir}|^2 = \sum_r |T^i_{jr}|^2 , \ \ \ \ \ \forall \ 1\leq i,j\leq n. \label{eq:BTP2}
\end{equation}

Of course the same equality holds when we replace $T$ by $C$, as $C=-T$. Also, the equality holds for any choice of unitary basis of ${\mathfrak g}$.

Now suppose ${\mathfrak h}$ is an ideal of the complex Lie algebra ${\mathfrak g}$. Denote by ${\mathfrak h}^{\perp}$ its orthogonal complement in ${\mathfrak g}$. Take a unitary basis $e=\{ e_1, \ldots , e_n\}  $ of ${\mathfrak g}$ so that $\{ e_1, \ldots , e_p\} $ is a basis of ${\mathfrak h}$. Since ${\mathfrak h}$ is an ideal, we have $[{\mathfrak h},{\mathfrak g}]\subseteq {\mathfrak h}$, so
$$ C^{\alpha}_{i\ast} =0 , \ \ \ \ \ \forall \ 1\leq i\leq p, \ \ \forall \ p+1\leq \alpha \leq n.$$
By (\ref{eq:BTP2}), we have $C^i_{\alpha \ast}=0$ as well, so  $[{\mathfrak h}^{\perp},{\mathfrak g}]\subseteq {\mathfrak h}^{\perp}$, thus we have proved that:

{\em For any ideal ${\mathfrak h}$ of ${\mathfrak g}$, its orthogonal complement ${\mathfrak h}^{\perp}$ is also an ideal in ${\mathfrak g}$. }

This establishes the decomposition (\ref{eq:decom}) and completes the proof of the first part of Theorem \ref{ThmChernflat}.

Conversely, let $G$ be a direct product $G={\mathbb C}^k \times G_1\times \cdots \times G_r$ where each $G_i$ is a connected, simply-connected simple complex Lie group. It suffices to deal with the semisimple case, so in the following we will assume that $G$ is a semisimple complex Lie group, with Lie algebra ${\mathfrak g}$. If we denote by $\mathfrak g_{\mathbb R}$ the Lie algebra $\mathfrak g$ considered as a real algebra, then a Cartan decomposition of  $\mathfrak g_{\mathbb R}$ is given by ${\mathfrak g_{\mathbb R}} = {\mathfrak u} + J{\mathfrak u}$, where ${\mathfrak u}$ is the Lie algebra of a compact semisimple Lie group $U$. If $B$ denotes the Cartan-Killing form of $\gg_{\mathbb R}$ we define the Hermitian metric $g$  by putting
$$g|_{\gu\times\gu} = -B,\ g(\gu,J\gu)=0,\ g|_{J\gu\times J\gu} = B.$$
This metric depends on the choice of a compact real form $\gu$, but as two such real forms are conjugate by an inner automorphism of $\gg$ (see e.g. \cite{He}, p.184), the corresponding metrics are biholomorphically isometric. This construction is well-known (see e.g. \cite{He}, p.182) and these metrics were used in \cite{LPV}, where they are called {\it canonical} metrics, and in \cite{G} among others. \par
In order to compute the Bismut connection using formula \eqref{def}, we identify elements of $\gg$ with left-invariant vector fields on $G$ and we have the following well-known expressions
$$2 \langle \nabla^{\mbox{\tiny LC}}_xy,z\rangle = \langle[x,y],z\rangle + \langle [z,x],y\rangle + \langle [z,y],x\rangle,$$
while
$$d\omega(x,y,z) = \langle [x,y],Jz\rangle +\langle[z,x],Jy\rangle +\langle [y,z],Jx\rangle, $$
where $J\in {\rm{End}}(\gg)$ denotes the complex structure. Using the fact that ${\rm ad}(x)$ commutes with $J$ for every $x\in \gg$, we have
$$d\omega(Jx,Jy,Jz) = \langle [x,y],z\rangle +\langle[z,x],y\rangle +\langle [y,z],x\rangle, $$
so that
$$\langle \nabla^b_xy,z\rangle = \langle [x,y],z\rangle + \langle [z,x],y\rangle,\qquad x,y,z\in\gg.$$
This can also be written as
$$ \nabla^b_x = \ad(x) - \ad(x)^*,\quad x\in \gg$$
where ${}^*$ denotes the adjoint with respect to $g$. \par

Using the fact that ${\rm ad}(x)$ is skew-symmetric if $x\in\gu$ and symmetric if $x\in J\gu$, we obtain that
\begin{eqnarray}
&& \nabla^b_x = 2\ \ad(x),\ x\in\gu,\\
&&\label{connection}\nabla^b_x = 0,\ x\in J\gu.
\end{eqnarray}
Now we consider the torsion $T^b$ which is given by
\begin{eqnarray}
&& T^b(y,z) = 3\ [y,z],\quad y,z\in\gu,\\
&& T^b(y,z)= [y,z],\quad y\in \gu,\ z\in J\gu,\\
&&T^b(y,z) = -[y,z],\quad y,z\in J\gu.
\end{eqnarray}
that $\nabla^bT^b=0$, we only need to verify $(\nabla^b_xT^b)(y,z) = 0$ for $x\in \gu,\ y,z\in \gg$ by formula \eqref{connection}.
When $y,z\in \gu$ we have
$$(\nabla^b_xT^b)(y,z)= 3\nabla^b_x[y,z] - T^b(2[x,y],z )- T^b(y, 2[x,z])$$
$$ = 6 \left([x,[y,z]] - [[x,y],z] - [y,[x,z]]\right) = 0$$
by Jacobi identity. When $z\in J\gu$ and $y\in \gu$ ($y\in J\gu$ resp.), again by Jacoby identity
$$(\nabla^b_xT^b)(y,z) = 2\epsilon\, ([x,[y,z]] - [[x,y],z] - [y,[x,z]]) = 0$$
with $\epsilon =1$ ($\epsilon=-1$ resp.).

\end{proof}

\begin{remark}
For $(M^n,g)$ in Theorem \ref{ThmChernflat}, we have $M=G/\Gamma$ where $\Gamma$ is a discrete subgroup of $\,\mbox{Iso}_{h}(G,g)$, the group of holomorphic isometries of $(G,g)$. The latter contains $G$ (as left translations) since $g$ is left-invariant. Note that in general there might not be a finite index subgroup $\Gamma'\subseteq \Gamma$ such that $\Gamma'\subseteq G$, and there might not be a finite index subgroup $\Gamma'\subseteq \Gamma$ such that $\Gamma'$ is contained in the product of holomorphic isometry groups of the factors of $G$.
\end{remark}

We recall that a metric connection with parallel torsion and curvature on a Riemannian manifold $(M,g)$ is called an {\em Ambrose-Singer} connection, and its existence is a
necessary and sufficient condition for $M$ to be locally homogeneous (see \cite{AS},\cite{TV}). Moreover when $(M,g)$ is simply-connected and complete, it admits an Ambrose-Singer connection with totally skew-symmetric torsion if and only if it is a {\em naturally reductive} homogeneous space with respect to some transitive Lie group $G$ of isometries (see e.g.\,\cite{TV}). By the work of Sekigawa \cite{Sekigawa} in the Hermitian case, a complete, simply-connected Hermitian manifold $(M,g,J)$ is homogeneous if and only if there exists an Ambrose-Singer connection which is Hermitian. Therefore $(M,g,J)$ is a  naturally reductive homogeneous space if and only if its Bismut connection is an Ambrose-Singer connection. As introduced in Definition \ref{defAS}, a {\rm BTP} metric $g$ on an Hermitian manifold $(M,g,J)$ is called {\em Bismut Ambrose-Singer} ({\rm BAS} in short) if its Bismut connection has parallel curvature or equivalently, if its Bismut connection is an Ambrose-Singer connection.

It can be verified that for the metric $g$ constructed in Theorem \ref{ThmChernflat}, its Bismut connection also has parallel curvature thus is Ambrose-Singer. We have the following observation:
%, therefore $(G,g)$ and its quotients are naturally reductive.

\begin{proposition}\label{ChernflatBAS}
Let $(M^n,g)$ be a compact Hermitian manifold that is Chern flat and {\rm BTP}. Then its Bismut connection is Ambrose-Singer, hence its universal covering manifold is naturally reductive.
\end{proposition}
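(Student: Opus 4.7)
The strategy is almost immediate once one combines Theorem \ref{ThmChernflat} with the curvature identity \eqref{curv} from the preliminaries. First, I would observe that under the {\rm BTP} assumption $\nabla^b T^b = 0$, identity \eqref{curv} gives the equivalence $\nabla^b R^b = 0$ if and only if $\nabla^b R = 0$, where $R$ denotes the Chern curvature. Since $(M^n,g)$ is Chern flat we have $R\equiv 0$, so the right-hand side holds trivially, and therefore $\nabla^b R^b = 0$.

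If one prefers to redo this ab initio instead of quoting \eqref{curv}, the same conclusion can be read off directly from \eqref{R^b}: under $\nabla^b T = 0$ the linear covariant-derivative terms $T^{\ell}_{ik,\bar{j}}$ and $\overline{T^k_{j\ell,\bar{i}}}$ on the right-hand side vanish, while each quadratic expression in $T$ and $\overline{T}$ in that formula is $\nabla^b$-parallel because $T$ itself is. Hence $R^b - R$ is $\nabla^b$-parallel, and $R\equiv 0$ forces $\nabla^b R^b \equiv 0$.

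To finish, I would combine the hypothesis $\nabla^b T^b = 0$ with the newly established $\nabla^b R^b = 0$: the Bismut connection is then a Hermitian metric connection with totally skew-symmetric torsion, parallel torsion, and parallel curvature, i.e.\ a Hermitian Ambrose--Singer connection. Passing to the universal cover, which is complete by compactness of $M$ and simply-connected by construction, Sekigawa's Hermitian version of the Ambrose--Singer theorem recalled just before the statement of the proposition then yields that the universal cover is naturally reductive under a transitive group of holomorphic isometries. There is essentially no real obstacle here: once Theorem \ref{ThmChernflat} and the identities \eqref{R^b}--\eqref{curv} are in hand, the proposition reduces to the one-line implication $R=0 \Rightarrow \nabla^b R^b = 0$ under the {\rm BTP} hypothesis.
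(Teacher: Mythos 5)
Your proposal is correct and follows essentially the same route as the paper: the paper's proof also reads off from \eqref{R^b} that under the {\rm BTP} hypothesis the Chern-flat condition forces $R^b$ to be a quadratic expression in the parallel torsion $T$, hence $\nabla^b R^b=0$, which is exactly your second, ``ab initio'' argument (and your first argument via \eqref{curv} is just the packaged form of the same computation). The concluding appeal to the Ambrose--Singer/Sekigawa theorem for natural reductivity of the universal cover likewise matches the discussion the paper gives immediately before the proposition.
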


\begin{proof} Using formula \eqref{R^b}  we see that when the metric $g$ is Chern flat and {\rm BTP}, $R^b$ is given by a quadratic expression of $T$, hence $\nabla^bR^b=0$. (Note that the components $R^b_{ijk\bar{\ell}}$ always vanish for {\rm  BTP} metrics).
\end{proof}

Next let us examine the uniqueness issue, namely, how many Chern flat {\rm BTP} metrics are there on the same manifold?
Let $M^n$ be a compact complex manifold. Suppose $g$, $g'$ are Chern flat and {\rm BTP} metrics on $M^n$. Then the torsion components of $g$ and $g'$ turn the universal cover $\widetilde{M}$ into (reductive) complex Lie groups $G$ and $G'$. First we claim the following

\begin{proposition}
Let $M^n$ be a compact complex manifold. Suppose $g$, $g'$ are both Chern flat {\rm BTP} metrics on $M^n$. The universal cover becomes reductive complex Lie groups $G$ and $G'$, respectively. Then $G$ is isomorphic to $G'$.
\end{proposition}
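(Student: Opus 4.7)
The plan is to realise both $\mathfrak g$ and $\mathfrak g'$ as the same intrinsic Lie algebra attached to $M$, and then invoke the Lie correspondence. For each metric $g$ (resp.\ $g'$), Chern flatness together with Boothby's theorem (as in the proof of Theorem \ref{ThmChernflat}) produces on the universal cover $\widetilde M$ a global $\nabla^c$-parallel unitary $(1,0)$-frame $\{e_i\}$ (resp.\ $\{e'_i\}$); since the Chern connection agrees with $\overline\partial$ on $(1,0)$-fields, each such frame is automatically holomorphic. The Lie algebra $\mathfrak g$ (resp.\ $\mathfrak g'$) is then the $n$-dimensional complex Lie subalgebra of $H^0(\widetilde M, T^{1,0}\widetilde M)$ spanned by the frame under the bracket of holomorphic vector fields, its structure constants being $-T^k_{ij}$ (resp.\ $-{T'}^k_{ij}$).

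The main step is to exhibit a canonical Lie algebra on $M$ to which both $\mathfrak g$ and $\mathfrak g'$ must be isomorphic. For this I would invoke Wang's theorem on compact complex parallelizable manifolds: any compact complex manifold whose holomorphic tangent bundle is holomorphically trivial is a quotient $\Lambda\backslash H$, with $H$ a simply-connected complex Lie group and $\Lambda\subset H$ a discrete cocompact subgroup acting by left translations, and the descended left-invariant vector fields yield a canonical identification $\mathrm{Lie}(H)\cong H^0(\Lambda\backslash H, T^{1,0})$ of complex Lie algebras. To handle the possibility that $T^{1,0}M$ itself is not holomorphically trivial (its Chern-holonomy representations $\rho_g,\rho_{g'}:\pi_1(M)\to U(n)$ may be non-trivial), I would pass to the common finite cover $\widehat M\to M$ corresponding to $\ker\rho_g\cap \ker\rho_{g'}$ after showing that these holonomy images are finite; on $\widehat M$ both parallel frames descend, so $\widehat M$ is complex parallelizable, and Wang's theorem applied there for each metric gives $\mathfrak g\cong H^0(\widehat M, T^{1,0}\widehat M)\cong \mathfrak g'$.

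Once $\mathfrak g\cong \mathfrak g'$ is established, $G$ and $G'$ are simply-connected complex Lie groups with isomorphic Lie algebras, so the complex Lie correspondence immediately yields $G\cong G'$. The main obstacle I anticipate is verifying finiteness of $\rho_g(\pi_1(M))$: the image must lie in the stabiliser of the constant torsion tensor $(T^k_{ij})$ in $U(n)$, which by the reductive decomposition of Theorem \ref{ThmChernflat} should be a discrete extension of the inner automorphisms already coming from $G$ itself. An alternative route that avoids this rigidity point entirely would be to use Proposition \ref{ChernflatBAS} to identify $G$ and $G'$ with the transvection groups of their respective Bismut Ambrose--Singer connections, which are canonically attached to the connection germ at a point and therefore metric-independent as abstract Lie groups.
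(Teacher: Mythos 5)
Your strategy hinges on producing a common \emph{compact} finite cover of $M^n$ that is complex parallelizable, and this is exactly where the argument breaks down. Finiteness of the Chern holonomy image $\rho_g(\pi_1(M))$ is equivalent to the deck group $\Gamma\subset \mbox{Iso}_h(G,g)$ containing a finite-index subgroup of left translations: an element of $\Gamma$ has trivial holonomy precisely when it preserves the left-invariant frame, i.e.\ commutes with all right translations, i.e.\ is a left translation. The Remark following the proof of Theorem \ref{ThmChernflat} explicitly warns that such a finite-index subgroup need not exist, and indeed it can fail. For instance, take $G=\mathrm{SL}(2,{\mathbb C})$ with its canonical metric, a torsion-free cocompact lattice $\Gamma_0\subset G$ with infinite abelianization, and a homomorphism $\phi:\Gamma_0\to U(1)\subset \SU(2)$ with infinite image; the group $\Gamma=\{x\mapsto \gamma x\,\phi(\gamma)^{-1}:\gamma\in\Gamma_0\}$ acts freely, properly discontinuously and cocompactly by holomorphic isometries, and the holonomy of $\gamma$ is $\ad$-conjugation by $\phi(\gamma)$, so the holonomy image is infinite. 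This also defeats your proposed rigidity argument: the stabiliser of the torsion tensor inside $U(n)$ is $\mathrm{Aut}(\gg)\cap U(n)$, which contains the positive-dimensional group $\mathrm{Ad}(U)$ for $U$ the compact real form, so it is not a discrete extension of anything. Without the compact parallelizable cover, Wang's theorem is unavailable and the chain $\gg\cong H^0(\widehat M,T^{1,0}\widehat M)\cong\gg'$ collapses. Your fallback via transvection groups has the same problem in another guise: $g$ and $g'$ give two \emph{different} Bismut connections on $M$, and nothing identifies their transvection groups a priori.

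For comparison, the paper avoids all analysis of the deck group and argues purely topologically on the universal cover: $G$ and $G'$ are simply-connected reductive complex Lie groups which are homeomorphic (both being $\widetilde M$); their maximal compact subgroups $K$, $K'$ are compact simply-connected semisimple groups sharing all topological invariants, hence isomorphic; the semisimple parts of $G$, $G'$ are the complexifications of $K$, $K'$, and the abelian factors then agree by dimension count. It is worth recording that the sound part of your argument would prove something stronger where it applies: if a common compact parallelizable cover $\widehat M$ did exist, the coefficients of any holomorphic vector field in a holomorphic parallelization are global holomorphic functions, hence constants, so both frames would span the same $n$-dimensional Lie algebra $H^0(\widehat M,T^{1,0}\widehat M)$ and $\gg$, $\gg'$ would literally coincide as algebras of holomorphic vector fields. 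But for the proposition as stated, the holonomy-finiteness step is a genuine gap.
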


\begin{proof}
Note that $G$ and $G'$  are reductive simply-connected homeomorphic complex Lie groups. Consider their maximal compact subgroups $K$, $K'$, which are semisimple and simply-connected. Clearly $K$ and $K'$ share the same topological invariants and therefore are isomorphic as Lie groups. It then follows that the semisimple parts of $G$  and $G'$ (which are the complexifications of $K$, $K'$) are isomorphic. The abelian part then must also be isomorphic. Thus we have proved that $G$ is isomorphic to $G'$.
\end{proof}

The remaining question is to know how many left-invariant (thus Chern flat) {\rm BTP} metrics are there on the same reductive complex Lie group $G$. By (\ref{eq:decom}), we know that any left-invariant {\rm BTP} metric $g$ on $G$ must preserve the product structure, and it is easy to see that the restriction of $g$ on the vector group factor is K\"ahler and flat. Thus the main issue here rests upon those simple factors.

Suppose $G$ is a simple complex Lie group with Lie algebra ${\mathfrak g}$. Denote by $B$ the Cartan-Killing form of ${\mathfrak g}$. Then a {\em $B$-isometry} is a linear isomorphism $f\in GL({\mathfrak g})$ such that $B(f(X),f(Y))=B(X,Y)$ for any $X$, $Y\in {\mathfrak g}$. Denote by $\mbox{Iso}(B)$ the group all $B$-isometries of ${\mathfrak g}$. In general it is bigger than $\mbox{Aut}({\mathfrak g})$.

If $g$ and $h$ are both left-invariant Hermitian metrics on $G$, then they determine a unique linear isomorphism $f\in GL({\mathfrak g})$ such that $h(X, Y) = g(f(X), Y)$ for any $X$, $Y\in {\mathfrak g}$. We have the following

\begin{theorem} \label{thm2.6}
Let $G={\mathbb C}^k \times G_1\times \cdots \times G_r$ where each $G_i$ is a connected, simply-connected simple complex Lie group. Then any left-invariant {\rm BTP} Hermitian metric on $G$ preserves the product structure, is K\"ahler flat on the Euclidean factor, and on each simple factor, two such metrics differ by a constant multiple of a $B$-isometry.
\end{theorem}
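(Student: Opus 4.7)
By Theorem~\ref{ThmChernflat}, any left-invariant {\rm BTP} Hermitian metric $g$ on $G$ induces an orthogonal decomposition of the Lie algebra $\gg = \ga \oplus \gg_1 \oplus \cdots \oplus \gg_r$, where $\ga \cong \mathbb C^k$ is the centre and each $\gg_i$ is a simple complex ideal. In particular, $g$ preserves the product decomposition of $G$. On the abelian factor $\mathbb C^k$ all Lie brackets vanish, so the Chern torsion is identically zero; since a left-invariant Hermitian metric on a complex Lie group is automatically Chern flat, the metric $g|_{\mathbb C^k}$ is both K\"ahler and Riemannian-flat. What remains is to show that on each simple factor $G_i$ any two left-invariant {\rm BTP} metrics differ by a positive constant and a $B$-isometry.

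My plan for a fixed simple factor $(G_i,g)$ is to establish the following structural lemma: there exist a compact real form $\gu \subset \gg_i$ and a constant $c>0$ such that $g = c \cdot g_\gu$, where $g_\gu$ is the metric built from $\gu$ and $-B$ in the converse direction of Theorem~\ref{ThmChernflat}. The candidate compact real form is to be recovered intrinsically from $g$ as
\[
\gu \ :=\ \{X \in \gg_i \ :\ \ad(X)^{\ast_g} = -\ad(X)\},
\]
i.e.\ the set of elements whose adjoint action is $g$-skew-Hermitian, with $\ast_g$ denoting the adjoint with respect to $g$. A direct computation, using that $\ad$ is a Lie algebra homomorphism and that $A \mapsto A^{\ast_g}$ is an anti-homomorphism, shows that $\gu$ is a real Lie subalgebra. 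The core difficulty --- the main obstacle --- is to prove that $\gg_i = \gu \oplus J\gu$ is a $g$-orthogonal real direct sum. For this I plan to exploit the second {\rm BTP} identity
\[
\sum_{r=1}^n \bigl( T^j_{ir}\overline{T^k_{\ell r}} - T^\ell_{kr}\overline{T^i_{jr}} \bigr) \ =\ 0,
\]
extracted in the proof of Theorem~\ref{ThmChernflat} (valid in every unitary frame), specialised to frames adapted to $\gu$ and to eigenspaces of $X \mapsto \ad(X) + \ad(X)^{\ast_g}$. Once the splitting $\gg_i = \gu \oplus J\gu$ is in place, $\mathrm{Ad}(U)$ embeds into $\mathrm{SO}(\gu, g|_\gu)$ and is therefore relatively compact, while the dimension count $\dim_{\mathbb R}\gu = \dim_{\mathbb C}\gg_i$ forces $\gu$ to be a compact real form of $\gg_i$.

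Granted the lemma, the restriction $g|_\gu$ is an $\ad(\gu)$-invariant inner product on the compact simple Lie algebra $\gu$, hence equals $-c\, B|_\gu$ for some $c>0$ by the uniqueness of ad-invariant bilinear forms; $J$-invariance and $g$-orthogonality of $\gu \oplus J\gu$ then fully recover $g$ from the pair $(\gu, c)$. Finally, given two {\rm BTP} metrics $g, h$ on $G_i$ with associated data $(\gu_g, c_g)$ and $(\gu_h, c_h)$, the classical result (see e.g.~\cite{He}) that any two compact real forms of a complex semisimple Lie algebra are conjugate by an inner automorphism produces $\phi \in \mathrm{Int}(\gg_i)$ with $\phi(\gu_g) = \gu_h$. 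Since inner automorphisms preserve the Killing form, $\phi$ is a $B$-isometry, and unwinding the construction yields $h = (c_h/c_g) \cdot \phi^* g$, which is precisely the statement that $g$ and $h$ differ by a positive constant multiple of a $B$-isometry.
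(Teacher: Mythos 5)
Your treatment of the product structure and of the Euclidean factor is fine and matches the paper. The problem is in the core of the argument for a simple factor $G_i$: the ``structural lemma'' you propose --- that every left-invariant {\rm BTP} metric $g$ on $G_i$ equals $c\,g_{\gu}$ for some compact real form $\gu$ recovered as $\{X:\ad(X)^{\ast_g}=-\ad(X)\}$ --- is never actually proved. You correctly identify the decomposition $\gg_i=\gu\oplus J\gu$ as ``the main obstacle,'' but what you offer for it is only a plan (``specialise the second {\rm BTP} identity to frames adapted to $\gu$ and to eigenspaces of $\ad(X)+\ad(X)^{\ast_g}$''), with no argument that $\gu$ is nonzero, let alone of real dimension $\dim_{\mathbb C}\gg_i$. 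Worse, your lemma is strictly stronger than the theorem: it would imply that any two left-invariant {\rm BTP} metrics on $G_i$ differ by a constant multiple of an \emph{inner automorphism}, since any two compact real forms are conjugate by one. The paper explicitly states in the remark following Theorem~\ref{thm2.6} that this is not known to the authors. So either your plan contains a genuine new idea that you have not written down, or it cannot be completed as sketched; as it stands the proof has a gap exactly at the step you flagged.

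For comparison, the paper's route avoids identifying a compact real form altogether. From $\nabla^bT=0$ one gets $\nabla^bB=0$ for the Killing form $B(X,Y)=\sum_{r,s}T^r_{sX}T^s_{rY}$; one then diagonalizes the symmetric complex matrix $B$ in a $g$-unitary frame (Takagi factorization), uses \eqref{eq:BTP2} and simplicity of $\gg_i$ to force $B=a_1 I$ in that frame, and deduces the frame-independent relation $B\,^t\!g^{-1}\overline{B}=a_1^2\,g$. Comparing this relation for two {\rm BTP} metrics $g$ and $h$ produces directly a linear map $f$ with $B(f(X),f(Y))=B(X,Y)$, i.e.\ a $B$-isometry, which need not be a Lie algebra automorphism. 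That weaker conclusion is all the theorem claims, and it is what the purely linear-algebraic argument delivers. If you want to salvage your approach, you would need to either prove your splitting $\gg_i=\gu\oplus J\gu$ (thereby answering the open question in the remark), or retreat to an argument that only produces a $B$-isometry, as the paper does.
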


\begin{proof} Let $G$ be as above and $g$ a left-invariant Hermitian metric on $G$ that is {\rm BTP}. By (\ref{eq:decom}), we know that $g$ must be a product metric. Clearly the vector group factor corresponds to the kernel of the torsion $T$ which coincides with the kernel of the Bismut curvature $R^b$, hence is K\"ahler and flat. The main issue here is to determine the metric up to a constant multiple on each simple factor. So in the following let us assume that $G$ is a (connected and simply-connected) simple complex Lie group and $g$ a left-invariant {\rm BTP} Hermitian metric on $G$.

We want to understand the relationship between $g$ and the group structure of $G$.  Consider the Cartan-Killing form $B$ of $\gg$, which can be written as
\begin{equation} \label{eq:Killing}
B(X,Y) = \mbox{Tr}(\ad(X)\circ\ad(Y)) =  \sum_{r,s=1}^n T^r_{sX} T^s_{rY}.
\end{equation}
As $T$ is $\nabla^b$-parallel, we have $\nabla^bB =0$, so by using formulas \eqref{Bism} we obtain for any $1\leq i,j\leq n$ that
\begin{equation}\label{parB}
\left\{ \begin{split} \  \sum_{l=1}^n \big(  T^l_{ki}B(e_l,e_j) + T^l_{kj}B(e_i,e_l) \big) = 0,\\
\ \sum_{i=1}^l\big( \overline{T^i_{kl}}B(e_l,e_j) + \overline{T^j_{kl}}B(e_i,e_l)\big)  =0.
\end{split} \right.
\end{equation}
It is well known that for any given symmetric complex matrix $P$, there always exists a unitary matrix $Q$ such that $^t\!Q P Q $ is diagonal with non-negative real elements. So by a constant unitary change of $e$ if necessary, we may assume that $B= (B_{ij}) =\mbox{diag} \{ a_1, \ldots , a_n\}$ with $a_1\geq \cdots \geq a_n\geq 0$. Therefore \eqref{parB} are written as
\begin{equation*}
a_jT^j_{ik} + a_iT^i_{jk} =0 , \ \ \ \ a_jT^i_{jk} + a_iT^j_{ik} =0 , \ \ \ \ \forall \ 1\leq i,j,k\leq n.
\end{equation*}
From this we obtain for $1\leq i,j\leq n$
$$a_j^2\sum_r |T^j_{ir}|^2 = a_i^2 \sum_r |T^i_{jr}|^2 = a_i^2 \sum_r |T^j_{ir}|^2.$$
We then conclude that
\begin{equation} \label{eq:SPT3}
T^j_{i\ast} = T^i_{j\ast} =0  \ \ \ \mbox{whenever} \ a_i\neq a_j.
\end{equation}
Now suppose that the diagonal elements of $B$ are not all equal, say $a_1=\cdots =a_r > a_{r+1} \geq \cdots \geq a_n\geq 0$. Then for any $1\leq i\leq r$ and any $r+1\leq \alpha \leq n$, by (\ref{eq:SPT3}), we know that $T^i_{\alpha \ast } = T^{\alpha}_{i\ast}=0$. So if we let
$${\mathfrak g}_1={\mathbb C}\{ e_1, \ldots , e_r\}, \ \ \ \  {\mathfrak g}_2={\mathbb C}\{ e_{r+1}, \ldots , e_n\}, $$
then both ${\mathfrak g}_1$ and ${\mathfrak g}_2$ are ideals in the complex Lie algebra ${\mathfrak g}$, contradicting with the assumption that ${\mathfrak g}$ is simple. Therefore the diagonal matrix $B$ must be a constant multiple of the identity: $B=a_1I$.  As ${\mathfrak g}$ is a semisimple complex Lie algebra, we must have $a_1>0$, therefore
$$ B(e_i,e_j) =  a_1 \delta_{ij}. $$
Note that if  $\tilde{e}$ is another basis of ${\mathfrak g}$, with $\tilde{e}_i=\sum_{j=1}^nP_{ij}e_j$, then
$$ B(\tilde{e}_i,\tilde{e}_j) =  a_1 \sum_k P_{ik}P_{jk} , \ \ \ \mbox{while} \ \ g(\tilde{e}_i,\tilde{e}_j) = \sum_k P_{ik}\overline{P_{jk}} , $$
so we get the following relation between $g$ and $B$:
$$
B \,^t\!g^{-1} \overline{B} = a_1^2 g,
$$
which holds under any basis of ${\mathfrak g}$. Now suppose $h$ is another left-invariant Hermitian metric on $G$ that is {\em {\rm BTP}}. Then the above equation would hold for $h$ as well:
$$ B \,^t\!h^{-1} \overline{B} = a_1'^2h,$$
where $a_1'>0$ is another constant. Let us denote by $f: {\mathfrak g} \rightarrow {\mathfrak g}$ the isomorphism (as complex linear map) so that $h(X,Y)=\frac{a_1}{a_1'} g(f(X), Y)$ for any $X,Y \in {\mathfrak g}$. Then the above relations imply that
\begin{equation} \label{Bf}
 B(f(X), f(Y)) = B(X,Y) , \ \ \ \ \forall \ X, Y\in {\mathfrak g}.
\end{equation}
So a constant multiple of $h$ is related to $g$ by a $B$-isometry. This completes the proof of Theorem \ref{thm2.6}.
\end{proof}

\begin{remark}
Given a simple complex Lie group $G$, Theorem \ref{ThmChernflat} says that there exist left-invariant {\rm BTP} metrics on $G$, and Theorem \ref{thm2.6} says that such metrics are related by $B$-isometries up to constant multiples. We do not know if such metrics are always related by inner automorphisms (up to constant multiples) or not, although this is indeed the case sometimes, for instance, when $n=3$.
\end{remark}

\vspace{0.3cm}

\section{Flag manifolds and the {\rm BTP} condition}

In this section we consider the case of a compact simply-connected (generalized) flag manifold, namely a compact simply-connected K\"ahler homogeneous manifold $F$. We first recall the construction of invariant complex structures on $F$ and then we will investigate the   properties of {\rm BTP} metrics.  \par
Given a compact connected semisimple Lie group $K$ and a closed subgroup $H\subset K$, a homogeneous space $F:= K/H$ is called a {\it flag manifold} if $H$ is the centralizer in $K$ of a torus $Z\subset K$. In this case $H$ turns out to be connected and it contains a maximal torus $T$ of $K$. At the level of Lie algebras, we may fix a reductive decomposition $\gk = \gh + \gm$, with $[\gh,\gm]\subseteq \gm$ so that $\gm$ identifies with the tangent space $T_{[H]}F$. This identification is obtained using the map $\imath:X\ni\gk\to X^*\in\Gamma(TF)$ which associates to each element $X$ of $\gk$ the vector field $X^*|_{gH} = \frac{d}{dt}|_{t=0}\exp(tX)gH$. Note that $[X^*,Y^*]= - [X,Y]^*$ for every $X,Y\in \gk$.

It is well-known (see e.g.\,\cite{BFR}) that an invariant almost complex structure $J$ on $F$ corresponds to an endomorphism $J\in \mbox{End}(\gm)$ with $J^2=-Id$ and $[\mbox{Ad}(h),J]=0$ for every $h\in H$. If we decompose $\gm^{\mathbb C} = \gm^{10}+\gm^{01}$ into $J$-eigenspaces, then the integrability of $J$ is equivalent to the condition
$$[\gm^{10},\gm^{10}]_{\gm^{\mathbb C}}\subseteq \gm^{10},$$
where the subscript ${-}_{\gm^{\mathbb C}}$ denotes the projection onto $\gm^{\mathbb C}$ with respect to the decomposition $\gk^{\mathbb C} = \gh^{\mathbb C} + \gm^{\mathbb C}$. Using the $\mbox{Ad}(H)$-invariance of $J$, the integrability condition can also be expressed by saying that $\gh^{\mathbb C}+ \gm^{10}$ is a subalgebra.

If we fix a maximal torus $T\subseteq H$ and therefore a maximal abelian subalgebra $\gt$ of $\gk$, then we can consider the root space decomposition
$$\gk^{\mathbb C} = \gt^{\mathbb C} + \bigoplus_{\alpha\in R}\gk_\alpha,$$
where $R\subset(\gt^{\mathbb C})^*$ is the set of all roots and $\gk_\alpha = \{v\in \gk^{\mathbb C}|\ [h,v]=\alpha(h)v,\ h\in \gt^{\mathbb C}\}$ is the root space corresponding to the root $\alpha\in R$. As $\gh$ contains $\gt$, we have that $\gh^{\mathbb C} = \gt^{\mathbb C} + \bigoplus_{\alpha\in R_{\gh}}\gk_\alpha$ for some subset $R_\gh$ of $R$ and $\gm^{\mathbb C} = \bigoplus_{\alpha\in R_\gm}\gk_\alpha$, where $R_\gm = R\setminus R_{\gh}$. Moreover as $J$ commutes with $\ad(h)$ for every $h\in \gt^{\mathbb C}$, we have that $\gm^{10}$ is made of root spaces. Then we can select an ordering of the roots, namely a splitting $R = R^+ \cup R^-$ with $R^-= - R^+$ so that for $R_\gm^{\pm} = R_\gm \cap R^{\pm}$ we have
$$(R_\gh + R_\gm^+) \cap R \subseteq R_\gm^+,\quad (R_\gm^+ + R_\gm^+) \cap R \subseteq R_\gm^+$$
and
$$J|_{\gk_\alpha} =  i\epsilon_\a\ \mbox{Id}\ \quad \mbox{where}\qquad \epsilon_\a=\pm 1\ \mbox{for}\ \alpha\in R_\gm^{\pm}.$$
This means that $\gm^{10} = \bigoplus_{\alpha\in R_\gm^+} \gk_\alpha$.

We now consider an invariant $J$-Hermitian metric $g$. The $\ad(\gt)$-invariance of $g$ implies that if $v\in \gk_\alpha$ and $w\in \gk_\beta$ for $\alpha,\beta\in R_\gm$ then
$$0 = (\alpha(h)+ \beta(h)) \cdot g(v,w),$$
so that $g(v,w)\neq 0 $ implies $\alpha = -\beta$. This means that $g$ is completely determined by its restriction to the one-dimensional root spaces $\gk_\alpha$, $\alpha\in R_\gm^+$. In order to simplify computations, we fix the Weyl basis $\{E_\alpha\}$ of $\gk_\alpha$ for $\alpha\in R$ with the property that (see \cite{He}, p.176)
$$B(E_\alpha,E_{-\alpha}) = 1, \quad [E_\alpha,E_{-\alpha}] = H_\alpha,$$
where $\alpha(h) = B(h,H_\alpha)$ for every $h\in \gt^{\mathbb C}$. The real algebra $\gk$ is spanned by $\gt$ together with the vectors
$$v_\alpha := E_\alpha-E_{-\alpha},\quad w_\alpha:= i\ (E_\alpha + E_{-\alpha}),\quad \alpha\in R_\gm^+$$
and $\overline{E_\alpha}= -E_{-\alpha}$ for every $\alpha \in R_\gm$,where the conjugation is with respect to the compact real form $\gk$ of $\gk^\mathbb C$.
It then follows that the metric $g$ satisfies $g(E_\alpha,E_\beta) \neq 0$ if and only if $\beta=-\alpha$ and therefore it is completely determined by the real positive numbers
$$g_\alpha := g(E_\alpha,\overline{E_{\alpha}}) = - g(E_\a,E_{-\a}) = \frac 12 g(v_\alpha,v_\alpha) > 0$$
which are subject to the condition of $\mbox{ad}(\gh)$-invariance, namely
$$g_{\alpha+\gamma} = g_\alpha,\qquad \alpha\in R_\gm,\ \gamma\in R_\gh,\ \alpha+\gamma\in R.$$
It is not difficult to see that the metric $g$ is K\"ahler if and only if the coefficients $\{g_\alpha\}_{\alpha\in R_\gm^+}$ satisfy the relation
$$g_{\alpha+\beta} = g_\alpha + g_\beta,\quad \alpha,\beta,\alpha+\beta\in R_\gm^+.$$

A special Hermitian metric is given by $g_o:= - B|_{\gm \times \gm}$, where $B$ denotes the Cartan-Killing form of $\gk$, and this corresponds to the condition $g_\alpha = 1$ for every $\alpha\in R_\gm$, so that $g_o$ is K\"ahler if and only if the sum of two positive roots in $R_\gm^+$ is never a root , hence $[\gm,\gm]_\gm=\{0\}$, i.e. $K/H$ is a Hermitian symmetric space. We now show that the associated Bismut connection coincides with the canonical connection on the reductive homogeneous space $K/H$. Indeed we can consider the invariant canonical linear connection $\nabla$ on $K/H$ whose torsion tensor $T^\nabla$ at the point $[eH]\in K/H$ is given by (see e.g.\,\cite{KN}, p.193)
$$T^\nabla(X,Y)|_{[eH]} = - [X,Y]_\gm\qquad X,Y\in \gm.$$
As any $K$-invariant tensor on $K/H$ is $\nabla$-parallel, then $\nabla$ leaves the metric $g_o$ and $J$ parallel. Moreover, by the standard property of $B$, we have at $[eH]$
$$g_o(T^\nabla(X,Y),Z) = B([X,Y]_\gm,Z) = B([X,Y],Z) = - B([X,Z],Y) = -g_o(T^\nabla(X,Z),Y)$$
for any $X,Y,Z\in\gm$, so that the torsion is skewsymmetric and $\nabla$ coincides with the Bismut connection $\nabla^b$ of $g_o$. Note the curvature $R^\nabla$ is also $\nabla$-parallel, so that $\nabla$ is an Ambrose-Singer connection of $(K/H,g_o)$. In other words, $(K/H,g_o)$ is a {\em BAS} manifold in the terminology of \cite{NiZheng}, where the word is the abbreviation for {\em Bismut Ambrose-Singer}, meaning a compact Hermitian manifold whose Bismut connection is Ambrose-Singer. To summarize, we have

\begin{proposition}
The Cartan-Killing metric $g_o$ on the flag manifold $K/H$ is {\rm BTP}. In fact it is {\rm BAS}.
\end{proposition}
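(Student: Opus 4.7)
The proof is essentially contained in the paragraph immediately preceding the statement, so the plan is simply to organise those observations into a clean argument invoking the uniqueness of the Bismut connection and the general fact that invariant tensors on a reductive homogeneous space are parallel with respect to the canonical connection.

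First I would recall the setup: $K/H$ is a reductive homogeneous space with reductive complement $\gm$ and an $\ad(H)$-invariant complex structure $J$ on $\gm$. Let $\nabla$ denote the canonical $K$-invariant connection of Nomizu corresponding to this reductive decomposition. Then both $g_o = -B|_{\gm\times \gm}$ and $J$ are $K$-invariant tensors on $K/H$, hence automatically $\nabla$-parallel. Next, using the $\ad$-invariance of the Cartan-Killing form, one verifies at the origin $[eH]$ that
\[
g_o(T^\nabla(X,Y),Z) = -B([X,Y]_\gm,Z) = -B([X,Y],Z),
\]
which is totally skew-symmetric in $X,Y,Z\in\gm$ by the invariance of $B$. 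Therefore $\nabla$ is a Hermitian connection on $(K/H,g_o,J)$ with totally skew-symmetric torsion, and by the uniqueness of the Bismut connection one gets $\nabla=\nabla^b$.

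The final step is to observe that $T^\nabla$ and $R^\nabla$ are $K$-invariant tensor fields on $K/H$; since $\nabla$ is itself $K$-invariant and coincides with the natural connection on the principal $H$-bundle $K\to K/H$, every $K$-invariant tensor field is $\nabla$-parallel (this is standard, see Kobayashi--Nomizu, Ch. X). In particular $\nabla^b T^b = \nabla T^\nabla = 0$ and $\nabla^b R^b = \nabla R^\nabla = 0$, which gives both the BTP property and, by Definition \ref{defAS}, the stronger BAS property.

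There is really no obstacle here; the only minor point to get right is the sign/coefficient conventions in the identification between $-d^c\omega$, the torsion of $\nabla^b$, and the bracket $-[X,Y]_\gm$, but this is handled cleanly via the skew-symmetry check above and the uniqueness statement for $\nabla^b$. If desired, one could also note that the curvature of $\nabla$ at $[eH]$ is given by $R^\nabla(X,Y)Z = -[[X,Y]_\gh,Z]$ for $X,Y,Z\in\gm$, which makes its $K$-invariance (and hence its parallelism) manifest and reconfirms the BAS conclusion.
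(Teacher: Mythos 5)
Your proof is correct and follows essentially the same route as the paper: identify the Nomizu canonical connection of the reductive decomposition with $\nabla^b$ via the skew-symmetry of its torsion (coming from the $\ad$-invariance of $B$) and the uniqueness of the Bismut connection, then conclude from the $\nabla$-parallelism of all $K$-invariant tensors. The only quibble is a harmless sign slip in $g_o(T^\nabla(X,Y),Z)$ (with $g_o=-B$ and $T^\nabla(X,Y)=-[X,Y]_\gm$ one gets $+B([X,Y]_\gm,Z)$), which does not affect the skew-symmetry argument.
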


 We remark that in \cite{ZhaoZ22}, where the classification of three-dimensional compact balanced {\em {\rm BTP}} manifolds were given, the Fano case leads to the flag threefold, and it took a somewhat length computation to verify that the particular metric is balanced and {\em {\rm BTP}}. By the above proposition it suffices to check that the metric is associated with the Cartan-Killing form.

The following proposition shows that all the invariant Hermitian metrics on a flag manifold are balanced. This result is probably already known, but we prefer to include an elementary proof.
\begin{proposition} Every $K$-invariant Hermitian metric on $K/H$ is balanced.
\end{proposition}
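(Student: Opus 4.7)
The plan is to reduce the balanced condition to the vanishing of a $K$-invariant $1$-form on $F=K/H$, and then to argue by representation theory that no such nonzero $1$-form exists. Recall that a Hermitian $n$-manifold is balanced exactly when $d\omega^{n-1}=0$, which via the Lee form $\theta$ (defined by $d\omega^{n-1}=\theta\wedge \omega^{n-1}$) amounts to $\theta\equiv 0$. For a $K$-invariant Hermitian structure, both $\omega$ and $d\omega^{n-1}$ are $K$-invariant, hence the Lee form $\theta$ is a $K$-invariant $1$-form on $F$.

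Via the identification $T_{[eH]}F\cong\gm$ (and the $\mathrm{Ad}(H)$-invariant inner product $-B|_{\gm\times\gm}$), a $K$-invariant $1$-form on $F$ corresponds bijectively to an element of $\gm^H=\{\xi\in\gm:\mathrm{Ad}(h)\xi=\xi \text{ for all } h\in H\}$. The core of the argument is then to prove $\gm^H=0$. I would carry this out using the hypothesis that $H=Z_K(Z)$ for some torus $Z\subset K$ with Lie algebra $\gt_Z\subseteq\gh$: complexifying and using the root space decomposition, one has $\gm^{\mathbb C}=\bigoplus_{\alpha\in R_\gm}\gk_\alpha$, where $R_\gm$ consists precisely of those roots with $\alpha|_{\gt_Z}\neq 0$ (this is exactly the defining feature of $R_\gh$ vs. $R_\gm$ for $H=Z_K(Z)$). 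Therefore the torus $Z$ acts by a nontrivial character $\exp\circ\alpha|_{\gt_Z}$ on each summand $\gk_\alpha\subseteq\gm^{\mathbb C}$, so $(\gm^{\mathbb C})^Z=0$ and a fortiori $\gm^H\subseteq\gm^Z=0$.

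Putting these pieces together, the Lee form of any $K$-invariant Hermitian metric on $F$ vanishes identically, and so the metric is balanced. A more hands-on variant of the same argument would directly invoke the Chern torsion trace $\tau_k=\sum_i T^i_{ik}$ instead of the Lee form: this is again a $K$-invariant section of $\gm^*$, and the same no-invariants statement forces $\tau\equiv 0$, which is the standard equivalent characterization of balancedness.

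I do not anticipate any real obstacle: the content of the proposition is the trivial fact that a flag manifold admits no nonzero invariant $1$-form, which is immediate once one recognizes that the centralizer definition of $H$ forces the central torus $Z$ to act on $\gm$ without invariant vectors. The only care needed is to spell out the bijection between invariant tensors on $K/H$ and $\mathrm{Ad}(H)$-invariant elements of the relevant tensor spaces on $\gm$, together with the standard equivalence between $\theta=0$, $d\omega^{n-1}=0$, and the balanced condition.
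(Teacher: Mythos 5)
Your proposal is correct and follows essentially the same route as the paper: reduce balancedness to the vanishing of a canonical $K$-invariant $1$-form (the paper uses $\delta\omega$, you use the Lee form $\theta$, which is equivalent), identify invariant $1$-forms with $\mathrm{Ad}(H)$-fixed vectors in $\gm$, and observe that there are none. The only cosmetic difference is in the final step: the paper derives $\gm^H=0$ from the maximality of the torus $\gt\subseteq\gh$, while you use the nontrivial action of the defining central torus $Z$ on each root space of $\gm^{\mathbb C}$; both are one-line arguments and equally valid.
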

\begin{proof} Let $g$ be any invariant Hermitian metric. The balanced condition is given by $d\omega^{n-1}=0$, where $\omega$ is the associated fundamental form of $g$ and $n=\dim_{\mathbb C}K/H$. This is equivalent to saying that $\delta\omega=0$, where $\delta$ denotes the codifferential w.r.t. $g$. Now, $\delta\omega$ is an invariant $1$-form, hence it corresponds to an invariant vector field $X$ on $K/H$. This implies that the vector $X_{[eH]}$ corresponds to a vector $v\in\gm$ which is invariant by the isotropy representation $\lambda:H\to \mbox{SO}(\gm,g)$, hence $[h,v]=0$ for every $h\in \gh$. In particular $[v,\gt]=0$, contradicting the maximality of the abelian subalgebra $\gt$, unless $v=0$, hence $X=0$ by invariance. \end{proof}
We now consider an invariant Hermitian metric $g$ on the flag manifold $F= K/H$ and compute its Bismut connection. We recall that the Nomizu operator relative to the Levi-Civita connection $\nabla^{\mbox{\tiny LC}}$ of $g$, namely $\Lambda^{\mbox{\tiny LC}}:\gm\to\so(\gm)$ defined as $\Lambda^{\mbox{\tiny LC}}(X)(Y) = \imath|_{\gm}^{-1}(\nabla^{\mbox{\tiny LC}}_{X^*}Y^*)|_{[H]}+[X,Y]_\gm$, is given by (see \cite{KN})
$$\Lambda^{\mbox{\tiny LC}}(X)(Y) = \frac 12 ([X,Y]_\gm + U(X,Y)),$$
where the bilinear map $U:\gm\times\gm\to\gm$ is defined as
$$g(U(X,Y),Z) = g([Z,X]_\gm,Y) + g([Z,Y]_\gm,X).$$
From this definition is follows that for roots $\a,\b\in R_\gm$, we have that $g(U(E_\a,E_\b),E_\g)\neq 0$ only when $\g=-\a-\b$, provided this is a root. If we use the standard notation $[E_\a,E_\b]=N_{\a,\b}E_{\a+\b}$, we have
$$g(U(E_\a,E_\b),E_{-\a-\b}) = -N_{-\a-\b,\a}\cdot g_\b - N_{-\a-\b,\b}\cdot g_\a.$$
Using the property that $N_{-\a-\b,\a}=N_{\a,\b} = N_{\b,-\a-\b}$ (see e.g.\,\cite{He}, p.172), we obtain
$$U(E_\a,E_\b) = N_{\a,\b} \frac{g_\b - g_\a}{g_{\a+\b}}(E_{\a+\b})_\gm,\quad \a,\b \in R_\gm$$
and therefore
\begin{equation}\label{LambdaLC}\Lambda^{\mbox{\tiny LC}}(E_\a)(E_\b) = \frac 12 N_{\a,\b}\left( 1 + \frac{g_\b-g_\a}{g_{\a+\b}}\right)\cdot (E_{\a+\b})_\gm.\end{equation}
In order to compute the Bismut connection, we need the expression of $d\omega(E_\a,E_\b,E_\g)$ for $\a,\b,\g\in R_\gm$. Using the $\ad(\gt)$-invariance, we see that $d\omega(E_\a,E_\b,E_\g)=0$ if $\g\neq -\a-\b$ , while we have (see also \cite{AD})
$$d\omega(E_\a,E_\b,E_{-\a-\b}) = -i N_{\a,\b}\left(\epsilon_\a g_\a + \epsilon_\b g_\b - \epsilon_{\a+\b}g_{\a+\b}\right)  $$
and
$$d\omega(JE_\a,JE_\b,JE_{-\a-\b}) = - N_{\a,\b}\left(\epsilon_\a \epsilon_\b g_{\a+\b} - \epsilon_\a \epsilon_{\a+\b}g_\b - \epsilon_\b\epsilon_{\a+\b}g_{\a}\right).  $$
Using formula (\ref{def}) for the Bismut connection, we see that the corresponding Nomizu operator $\Lambda^b$ is given by
$$\Lambda^b(E_\a)E_\b = \Lambda^{\mbox{\tiny LC}}(E_\a)(E_\b) -\frac 1{2g_{\a+\b}}d\omega(JE_\a,JE_\b,JE_{-\a-\b})\, (E_{\a+\b})_\gm=$$
$$= \frac 12 N_{\a,\b}\left( (1+\ep_\a\ep_\b) - (1+\ep_\b\ep_{\a+\b})\frac{g_\a}{g_{\a+\b}} + (1-\ep_\a\ep_{\a+\b})\frac{g_\b}{g_{\a+\b}}\right) \, (E_{\a+\b})_\gm.$$
In particular we have
\begin{eqnarray}\label{conn}
&& \Lambda^b(E_\a)(E_\b)= N_{\a,\b}\left(1-\frac{g_\a}{g_{\a+\b}}\right) \, E_
{\a+\b},\qquad \mbox{if}\ \a,\b\in R_\gm^+,\\
&& \Lambda^b(E_\a)(E_\b)= 0, \qquad \mbox{if}\ \a\in R_\gm^+, \b\in R_\gm^-, \a+\b\not\in R\ {\rm{or}}\ \a+\b\in R_\gm^+\\
&& \Lambda^b(E_\a)(E_\b)= N_{\a,\b}\left(\frac{g_\b-g_\a}{g_{\a+\b}}\right) \, E_
{\a+\b},\qquad \mbox{if}\ \a\in R_\gm^{\pm},\b\in R_\gm^{\mp}, \a+\b \in R_\gm^{\mp}.
\end{eqnarray}
Therefore the torsion $T^b$, which is given by $T^b(X,Y) = \Lambda^b(X)(Y)-\Lambda^b(Y)(X) - [X,Y]$ for $X,Y\in \gm$, can be written as
\begin{eqnarray}
&& T^b(E_\a,E_\b) = N_{\a,\b}\left( 1 - \frac{g_\a + g_\b}{g_{\a+\b}}\right) E_{\a+\b},\qquad \mbox{if}\ \a,\b\in R_\gm^+,\\
&& T^b(E_\a,E_\b) = 0, \qquad \mbox{if}\ \a\in R_\gm^+,\b\in R_\gm^-, \a+\b\not\in R_\gm,\\
&& T^b(E_\a,E_\b) = N_{\a,\b}\left( \frac{g_\a - g_\b}{g_{\a+\b}}-1\right) E_{\a+\b},\qquad \mbox{if}\ \a\in R_\gm^+,\b\in R_\gm^-, \a+\b\in R_\gm^+,\\
&& T^b(E_\a,E_\b) = N_{\a,\b}\left( \frac{g_\b - g_\a}{g_{\a+\b}}-1\right) E_{\a+\b},\qquad \mbox{if}\ \a\in R_\gm^+,\b\in R_\gm^-, \a+\b\in R_\gm^-.
\end{eqnarray}
Using the skew-symmetry of the torsion $T^b$, we immediately see that the {\em {\rm BTP}} condition $\nabla^bT^b=0$ boils down to verifying that for every positive roots $\a,\b\in R_\gm^+$ we have
$$\nabla^b_{E_\gamma}T^b(E_\a,E_\b) = 0, \qquad \mbox{for\ every}\ \gamma\in R_\gm.$$
\begin{lemma}\label{Lemma1} Let $\a,\b\in R_\gm^+$ with $\a+\b\in R_\gm^+$. Given the two conditions
\begin{itemize}
\item[i)] $\nabla^b_{E_{-\a}}T^b(E_\a,E_\b)=0$
\item[ii)] $\nabla^b_{E_{-\b}}T^b(E_\a,E_\b)=0$
\end{itemize}
we have:
\begin{itemize}
\item[1.] if $\a-\b\not\in R_\gm$ then (i) and (ii) imply that  either $g_{\a+\b} = g_\a + g_\b$ or $g_{\a+\b} = g_\a=g_\b$;
\item[2.] if  $\a-\b\in R_\gm^+$, then (i) implies that $g_{\a+\b} = g_\a + g_\b$ or $g_{\a+\b} = g_\a$;
\item[3.] if $\a-\b\in R_\gm^-$, then (ii) implies that either $g_{\a+b} = g_\a + g_\b$ or $g_{\a+\b} = g_\b$.
\end{itemize}
\end{lemma}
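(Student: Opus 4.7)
The plan is to invoke the standard Kobayashi--Nomizu criterion for invariant tensors on a reductive space: an invariant $(1,2)$-tensor $K$ on $K/H$ satisfies $(\nabla^b_X K)(Y,Z)=0$ at the origin exactly when
$$\Lambda^b(X)\bigl(K(Y,Z)\bigr)-K\bigl(\Lambda^b(X)Y,Z\bigr)-K\bigl(Y,\Lambda^b(X)Z\bigr)=0.$$
Applied with $K=T^b$, conditions (i) and (ii) become two scalar identities corresponding to the choices $(X,Y,Z)=(E_{-\alpha},E_\alpha,E_\beta)$ and $(E_{-\beta},E_\alpha,E_\beta)$, respectively. The key observation is that the sparsity of $\Lambda^b$ coming from the piecewise formula \eqref{conn} (and its two companions) forces all but one or two of the three summands to vanish, so each identity collapses to a single product whose factors exactly reproduce the candidate relations in the statement.

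For (i) I first note that the middle summand disappears: since $-\alpha+\alpha=0\notin R_\gm$, one has $\Lambda^b(E_{-\alpha})(E_\alpha)=0$. For the first summand, writing $T^b(E_\alpha,E_\beta)=c_{\alpha,\beta}\,E_{\alpha+\beta}$ with $c_{\alpha,\beta}=N_{\alpha,\beta}\bigl(1-\tfrac{g_\alpha+g_\beta}{g_{\alpha+\beta}}\bigr)$, the mixed-sign branch of the $\Lambda^b$ formula, applied with $-\alpha\in R_\gm^-$, $\alpha+\beta\in R_\gm^+$, and $(-\alpha)+(\alpha+\beta)=\beta\in R_\gm^+$, gives a nonzero multiple of $\tfrac{g_{\alpha+\beta}-g_\alpha}{g_\beta}\,E_\beta$. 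Thus the first summand is proportional to $c_{\alpha,\beta}\,\tfrac{g_{\alpha+\beta}-g_\alpha}{g_\beta}\,E_\beta$. The third summand, $T^b(E_\alpha,\Lambda^b(E_{-\alpha})E_\beta)$, is where the trichotomy of the statement enters, through whether $\beta-\alpha$ lies in $R_\gm^+$, $R_\gm^-$, or outside $R$.

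In Case~1 ($\alpha-\beta\notin R_\gm$, hence also $\beta-\alpha\notin R$), one has $N_{-\alpha,\beta}=0$, so $\Lambda^b(E_{-\alpha})E_\beta=0$ and the third summand drops. Hence (i) reduces to $c_{\alpha,\beta}(g_{\alpha+\beta}-g_\alpha)=0$, giving $g_{\alpha+\beta}=g_\alpha+g_\beta$ or $g_{\alpha+\beta}=g_\alpha$. The symmetric argument applied to (ii) yields $g_{\alpha+\beta}=g_\alpha+g_\beta$ or $g_{\alpha+\beta}=g_\beta$, and combining the two dichotomies produces the stated alternative. In Case~2 ($\alpha-\beta\in R_\gm^+$, equivalently $\beta-\alpha\in R_\gm^-$), the sign-pattern $(-\alpha,\beta,\beta-\alpha)\in R_\gm^-\times R_\gm^+\times R_\gm^-$ falls into the vanishing branch of the general Nomizu formula (either by direct specialisation with $(\epsilon_{-\alpha},\epsilon_\beta,\epsilon_{\beta-\alpha})=(-,+,-)$, or by complex-conjugating the second piece of the $\Lambda^b$ formula), so again $\Lambda^b(E_{-\alpha})E_\beta=0$. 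Consequently (i) alone forces $g_{\alpha+\beta}=g_\alpha+g_\beta$ or $g_{\alpha+\beta}=g_\alpha$. Case~3 is symmetric under $\alpha\leftrightarrow\beta$ and is deduced from (ii) in the same way.

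The only delicate step is the bookkeeping of sign configurations needed to identify the correct branch of the $\Lambda^b$ formulas in each summand; beyond that the computation is purely mechanical. I do not foresee a genuine obstacle, since in all three cases the potentially troublesome third summand is eliminated for free by the sparsity of $\Lambda^b$, and this is precisely what allows a single one of conditions (i), (ii) to suffice in Cases~2 and~3.
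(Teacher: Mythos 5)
Your proposal is correct and follows essentially the same route as the paper: expand the parallelism condition via the derivation property of $\nabla^b$ on $T^b$, observe that the middle term drops because $-\a+\a=0$ is not a root, compute the first term via the mixed-sign branch of \eqref{conn} to get the factor $\tfrac{g_{\a+\b}-g_\a}{g_\b}$, and kill the third term by the vanishing of $\Lambda^b(E_{-\a})E_\b$ in the relevant sign configurations, then combine the two dichotomies in Case 1. Your write-up is in fact slightly more explicit than the paper's about which branch of the Nomizu formula applies in each case.
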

\begin{proof} We have that (i) is equivalent to
$$0 = \Lambda^b(E_{-\a})(T^b(E_\a,E_\b)) - T^b(\Lambda^b(E_{-\a})E_\a,E_b) - T^b(E_\a,\Lambda^b(E_{-\a})E_\b),$$
hence
$$0= N_{\a,\b}\left( 1 - \frac{g_\a + g_\b}{g_{\a+\b}}\right) \Lambda^b_{E_{-\a}}(E_{\a+\b}) - T^b(E_\a,\Lambda^b(E_{-\a})E_\b) =  $$
$$ = N_{\a,\b}\left( 1 - \frac{g_\a + g_\b}{g_{\a+\b}}\right)\cdot N_{-\a,\a+\b}\left(\frac{g_{\a+\b}-g_\a}{g_{\b}}\right)E_\b - T^b(E_\a,\Lambda^b(E_{-\a})E_\b).$$
In case 1 and 2, we have that $\Lambda^b(E_{-\a})E_\b=0$ and therefore either $g_{\a+\b} = g_\a + g_\b$ or $g_{\a+\b} = g_\a$. Repeating the same argument using (ii), we get our claim.\par
In case 3, we use (ii) and the same computation as above with $\a,\b$ interchanged gives our claim. \end{proof}
\par
We are now able to characterize {\em {\rm BTP}} metrics on a certain class $\mathcal C$ of flag manifolds, where the isotropy representation has at most two irreducible summands, as mentioned in the Introduction.
\begin{proof}[\bf{Proof of Theorem \ref{flag2summ}}]  We start noting that in case the flag manifold is isotropy irreducible, then every invariant metric is a multiple of the metric induced by $-B$ and our claim follows. Therefore we focus on the case when the isotropy representation of $H$ has two irreducible summands, say $\gm = \gm_1\oplus \gm_2$. These flag manifolds have been classified in \cite{AC} and we recall here some facts which are relevant to our discussion. In particular we can choose a system of simple roots of $\gk$, say $\Pi=\{\a_1,\ldots,\a_r\}$, and we can select a root $\a_{i_o}\in \Pi$ so that $\Pi\setminus \{\a_{i_o}\}$ is a system of simple roots of the isotropy subalgebra $\gh$. Then it turns out that $\gm_j^{1,0}$ is spanned by the root spaces $\gk_\a$, where $\a$ is a positive root such that $\a=\sum_{i=1}^rn_i\a_i$ and $n_{i_o} = j$ for $j=1,2$.\par
We now claim that $[\gm_1^{1,0},\gm_1^{1,0}]=\gm_2^{1,0}$. Indeed by the description of the modules $\gm_j^{1,0}$, we see that $[\gm_1^{1,0},\gm_1^{1,0}]\subseteq\gm_2^{1,0}$ is
an $\ad(\gh)$-invariant submodule of the irreducible module $\gm_2^{1,0}$ and therefore $[\gm_1^{1,0},\gm_1^{1,0}]=\{0\}$ if our claim does not hold. On the other hand the following inclusions together with their conjugate follow immediately from the structure of $\gm_j^{1,0}$ and its conjugate $\gm_j^{0,1}$:
$$[\gm_1^{1,0},\gm_1^{0,1}]\subseteq \gh^{\mathbb C},\ [\gm_1^{1,0},\gm_2^{1,0}]=\{0\},\ [\gm_1^{1,0},\gm_2^{0,1}]\subseteq \gm_1^{0,1}.$$
Therefore if $[\gm_1^{1,0},\gm_1^{1,0}]=\{0\}$ then $\gm_1^{\mathbb C} + \gh^{\mathbb C}$ is a non trivial ideal of $\gk$ and this contradicts the simplicity of $\gk$.\par
It then follows that there exists two positive roots $\a,\b$ whose root space are contained in $\gm_1^{1,0}$ and such that $\a+\b$ is a root. By Lemma \ref{Lemma1} we have either
$g_{\a+\b}= g_\a+g_\b$ or $g_{\a+\b}=g_\a$ or $g_{\a+\b}=g_\b$. We now observe that the irreducibility of the modules $\gm_j$, $j=1,2$, implies that for every $j=1,2$ and for every  $\g_1,\g_2\in
\gm_j^{1,0}$ we have $g_{\g_1}=g_{\g_2}$. Hence, if $g_{\a+\b}=g_\a+g_\b$ the metric $g$ is K\"ahler, while if either $g_{\a+\b}=g_\a$ or $g_{\a+\b}=g_\b$, then the values
$\{g_\g|\ \g\in R_\gm^+\}$ are all equal and the metric is the restriction of $-B$ up to scaling.\end{proof}
\medskip
For the sake of completeness, in Table \ref{table} we list all flag manifolds with $K$ simple and whose isotropy representation has exactly two irreducible summands (note that when $\gk=\gg_2$ the isotropy subalgebra corresponds to a short root).

\begin{table}[ht]\label{T1}
	\centering
	\renewcommand\arraystretch{1.1}
	\begin{tabular}{|c|c|c|c|}
		\hline
		{\mbox{Type}}				& 	$\gk$					&	$\gh$	& 	{\mbox{dimension}}			 	\\ \hline \hline
	$B$ &	$\so(2\ell +1)$ & $\so(2(\ell-p)+1) + {\mathfrak u}(q) +\mathbb R$ & $p(4\ell+1-3p)$,\, $p\leq \ell$				\\ \hline
	$C$ & ${\mathfrak {sp}}(\ell)$ & ${\mathfrak {sp}}(\ell-p) + {\mathfrak u}(p)$ & $p(4\ell+1-3p)$,\, $p< \ell$			\\ \hline
	$D$ & $\so(2\ell)$ & $\so(2(\ell-p)) + \mathfrak u(p)$ & $p(4\ell-1-3p)$,\, $p< \ell$			\\ \hline
	$E$ & $\mathfrak e_6$ & $\su(5)+\su(2)+\mathbb R$ & $50$			\\ \hline
	$E$ & $\mathfrak e_6$ & $\su(6)+\mathbb R)$ & $42$			\\ \hline
	$E$ & $\mathfrak e_7$ & $\so(10)+\su(2)+\mathbb R$ & $84$			\\ \hline
	$E$ & $\mathfrak e_7$ & $\so(12)+\mathbb R$ & $68$			\\ \hline
	$E$ & $\mathfrak e_7$ & $\su(7)+\mathbb R$ & $84$			\\ \hline
	$E$ & $\mathfrak e_8$ & $\mathfrak e_7+\mathbb R$ & $114$			\\ \hline
	$E$ & $\mathfrak e_8$ & $\so(14)+\mathbb R$ & $156$			\\ \hline
	$F$ & $\mathfrak f_4$ & $\so(7)+\mathbb R$ & $30$			\\ \hline
	$F$ & $\mathfrak f_4$ & $\mathfrak{sp}(3)+\mathbb R$ & $30$			\\ \hline
	$G$ & $\gg_2$ & $\mathfrak u(2)$ & $10$			\\ \hline
	\end{tabular}
	\vspace{0.1cm}
	\caption{Classification of class $\mathcal C$ flag manifolds.\phantom\qquad\qquad \qquad \qquad \qquad }\label{table}
\end{table}
%\phantom\qquad \qquad \qquad \qquad \qquad

In the following Proposition we will collect some facts about roots in $R_\gm^+$ in case the Lie algebra $\gk$ is supposed to be {\em simply laced}, namely when all roots have the same length. These properties will be used in the proof of Theorem \ref{SU} and may lead to similar results for other flag manifolds.\par
We recall that in a simply laced Lie algebra all roots have the say length, say $c$ and the length of a string of roots is at most $2$. Given two roots $\a,\b$ which are not orthogonal we have $B(\a,\b)=\pm \frac 12 c^2$ and if $\a+\b\in R$, then $\a-\b\not\in R$, so that  so that (1) in Lemma \ref{Lemma1} applies .

We now prove the following
\begin{proposition}\label{simply} Let $F=K/H$ be a flag manifold endowed with an invariant Hermitian metric $g$ which is determined by the positive numbers $\{g_\a\}_{\a\in R_\gm^+}$. We suppose that $\gk$ is simply laced and that $g$ satisfies the {\rm BTP} condition. Let $\a,\b\in R_\gm^+$ be positive roots with $\a+\b\in R$ and $g_{\a+\b}\neq g_\a+g_\b$. Then the following properties holds:
\begin{itemize}
\item[a)] if there is $\g\in R_\gm^+$ such that $\b+\g\in R_\gm^+$ and $\a+\b+\g\in R_\gm^+$, then $$g_\a = g_\b=g_\g=g_{\a+\b}=g_{\b+\g}=g_{\a+\b+\g};$$
\item[b)] if $\a=\lambda+\mu$ for some $\lambda,\mu\in R_\gm^+$, then $g_\lambda= g_\mu=g_\a$.
\end{itemize}
\end{proposition}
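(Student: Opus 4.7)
The plan is to combine Lemma~\ref{Lemma1} (whose restriction $\alpha-\beta\notin R$ is automatic in the simply laced setting whenever $\alpha+\beta\in R$, since root strings then have length at most two) with the identity $\nabla^bT^b=0$ tested on a few carefully chosen triples. Writing $t:=g_\alpha$, the preamble hypothesis together with Lemma~\ref{Lemma1}(1) applied to $(\alpha,\beta)$ immediately yields $g_\alpha=g_\beta=g_{\alpha+\beta}=t$. I would first establish (b) and then use (b) to close (a).

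For (b), I argue by contradiction, assuming the other alternative of Lemma~\ref{Lemma1}(1) applied to $(\lambda,\mu)$, namely $g_\alpha=g_\lambda+g_\mu$. A short inner-product computation shows that in the simply laced setting the identity $B(\lambda,\beta)+B(\mu,\beta)=B(\alpha,\beta)$ together with the fact that each pairwise product of distinct non-opposite roots takes exactly one of three possible values (half the common root-length, $0$, minus that half) forces exactly one of $\lambda+\beta$, $\mu+\beta$ to be a root. Say $\mu+\beta\in R$ and $\lambda+\beta\notin R$, the other case being symmetric. Lemma~\ref{Lemma1}(1) applied to $(\mu,\beta)$ then offers $g_{\mu+\beta}=g_\mu+g_\beta$ or $g_{\mu+\beta}=g_\mu=g_\beta$; the latter combined with $g_\beta=g_\alpha=g_\lambda+g_\mu$ would force $g_\lambda=0$, so only the K\"ahler alternative $g_{\mu+\beta}=g_\mu+g_\beta$ survives. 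Testing $(\nabla^b_{E_\beta}T^b)(E_\lambda,E_\mu)=0$ now closes the case: the first term vanishes because $T^b(E_\lambda,E_\mu)=0$ (K\"ahler for $(\lambda,\mu)$), the second because $\Lambda^b(E_\beta)E_\lambda=0$ (as $\lambda+\beta\notin R$), while a direct computation using $g_\lambda+g_{\mu+\beta}=2g_\alpha=2g_{\alpha+\beta}$ shows that the third term is a nonzero scalar multiple of $N_{\beta,\mu}\,N_{\lambda,\mu+\beta}\,E_{\alpha+\beta}$, which cannot vanish.

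For (a), applying Lemma~\ref{Lemma1}(1) to $(\beta,\gamma)$ yields $g_{\beta+\gamma}=g_\beta+g_\gamma$ or $g_{\beta+\gamma}=g_\beta=g_\gamma$. The first alternative I would exclude by combining the Lemma on $(\alpha,\beta+\gamma)$, which forces $g_{\alpha+\beta+\gamma}=2t+g_\gamma$ (the alternative $g_\alpha=g_{\beta+\gamma}$ yielding the impossible $g_\gamma=0$), with the Lemma on $(\alpha+\beta,\gamma)$, which forces $g_{\alpha+\beta+\gamma}\in\{t+g_\gamma,\,t\}$: neither value is compatible with $2t+g_\gamma$. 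Hence $g_\gamma=g_{\beta+\gamma}=t$ and, by the same Lemma~\ref{Lemma1} analyses, $g_{\alpha+\beta+\gamma}\in\{t,2t\}$. To pin this value down I would test $(\nabla^b_{E_\gamma}T^b)(E_\alpha,E_\beta)=0$: the third term vanishes because $\Lambda^b(E_\gamma)E_\beta=0$ (as $g_\gamma=g_{\beta+\gamma}$), and when $\alpha+\gamma\notin R$ the second term vanishes too ($N_{\gamma,\alpha}=0$), leaving $g_{\alpha+\beta+\gamma}=t$. If instead $\alpha+\gamma\in R$, Lemma~\ref{Lemma1} on $(\alpha,\gamma)$ leaves $g_{\alpha+\gamma}\in\{t,2t\}$: the value $t$ makes $\Lambda^b(E_\gamma)E_\alpha=0$ and the same test works, while the value $2t$ is excluded by invoking part (b) with $(\alpha',\beta')=(\alpha+\gamma,\beta)$. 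Indeed $g_{\alpha+\beta+\gamma}\in\{t,2t\}$ differs from $g_{\alpha+\gamma}+g_\beta=3t$, so (b) applies to the decomposition $\alpha+\gamma=\alpha+\gamma$ and concludes $g_\alpha=g_\gamma=g_{\alpha+\gamma}$, contradicting $g_\alpha=t$ and $g_{\alpha+\gamma}=2t$.

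The main obstacle is not a single hard step but the bookkeeping: at each stage one must track which auxiliary sums of roots ($\alpha+\gamma$, $\lambda+\beta$, $\mu+\beta$, \ldots) happen to lie in $R$, since each such possibility contributes an extra term to the {\rm BTP} equation being tested. The simply laced restriction is what keeps this tractable: it reduces Lemma~\ref{Lemma1} to the two alternatives of case~$1$ alone, it enforces the trichotomy on pairwise inner products that pins down which of $\lambda+\beta,\mu+\beta$ is a root in part (b), and it makes the auxiliary root sums behave rigidly enough for the BTP equations to close.
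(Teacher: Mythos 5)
Your proof is correct, and it relies on the same basic toolkit as the paper --- Lemma~\ref{Lemma1} (with the observation that the simply laced hypothesis makes case~(1) the only relevant one, since $\a+\b\in R$ forces $\a-\b\notin R$) combined with testing $\nabla^bT^b=0$ on well-chosen triples of root vectors --- but the architecture is reversed and the specific identities tested are different. The paper proves (a) first, by deriving two scalar identities from $\nabla^b_{E_\a}T^b(E_\b,E_\g)=0$ and $\nabla^b_{E_\b}T^b(E_\a,E_\g)=0$ (using the Jacobi relation $N_{\a,\b}N_{\a+\b,\g}+N_{\b,\g}N_{\b+\g,\a}=0$), and then deduces (b) from (a) after ruling out $g_{\lambda+\mu}=g_\lambda+g_\mu$. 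You instead prove (b) directly and independently, via the single test $(\nabla^b_{E_\b}T^b)(E_\lambda,E_\mu)=0$ after the clean observation that $B(\lambda,\b)+B(\mu,\b)=-\tfrac12c^2$ forces exactly one of $\lambda+\b,\mu+\b$ to be a root; and in (a) you replace the paper's identity~(2) by pure Lemma-\ref{Lemma1} bookkeeping on the pairs $(\a,\b+\g)$ and $(\a+\b,\g)$, and its identity~(1) by the test $(\nabla^b_{E_\g}T^b)(E_\a,E_\b)=0$. Both routes work, and yours is arguably lighter on computation while making the logical dependence between (a) and (b) cleaner. One simplification you missed: under the hypotheses of (a) one always has $B(\a,\g)=B(\a+\b,\g)-B(\b,\g)=-\tfrac12c^2+\tfrac12c^2=0$, hence $\a+\g\notin R$, so the final case split in your proof of (a) (the subcase $\a+\g\in R$, where you invoke (b)) is vacuous --- though you do handle it correctly and without circularity.
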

\begin{proof}
(a)\ By Lemma \ref{Lemma1}, (1), we know that $g_\a=g_\b=g_{\a+\b}$. Moreover we know that any string of roots has length $2$ and given two roots $\lambda,\mu\in R$ we have that $\lambda+\mu\in R$ if and only if $B(\lambda,\mu) = -\frac 12 c^2$, where $c$ denotes the common length of any root in $R$. Therefore if $\a+\b,\b+\g\in R$ and $\a+\b+\g\in R$, we have that $-\frac 12 c^2=B(\a+\b,\g)=B(\a,\g)+B(\b,\g)=B(\a,\g)-\frac 12 c^2$, hence $B(\a,\g)=0$ and $\a+\g\not\in R$.\par We now consider the condition $\nabla^b_{E_\a}T^b(E_\b,E_\g)=0$ and using the fact that $\a+\g\not\in R$ we have
\begin{eqnarray*}
%\begin{align*}
&&0=\Lambda^b(E_\a)(T(E_\b,E_\g))-T(\Lambda^b(E_\a)E_\b,E_\g) - T(E_\b,\Lambda^b(E_\a)E_\g) \\
&&=\Lb(E_\a)\big(N_{\b,\g}\big( 1-\frac{g_\b+g_\g}{g_{\b+\g}}\big)E_{\b+\g}\big) - N_{\a,\b}\big(1-\frac{g_\a}{g_{\a+\b}}\big)N_{\a+\b,\g}\big(1-\frac{g_\g+g_{\a+\b}}{g_{\a+\b+\g}}\big)\\
&&= N_{\b,\g}N_{\a,\b+\g}\big( 1-\frac{g_\b+g_\g}{g_{\b+\g}}\big)\big(1-\frac{g_\a}{g_{\a+\b+\g}}\big) - N_{\a,\b}N_{\a+\b,\g}\big(1-\frac{g_\a}{g_{\a+\b}}\big)\big(1-\frac{g_\g+g_{\a+\b}}{g_{\a+\b+\g}}\big).
%\end{align*}
\end{eqnarray*}
As by Jacobi identity $N_{\a,\b}N_{\a+\b,\g}+N_{\b,\g}N_{\b+\g,\a} =0$, we have that
\begin{equation}\label{1} \big( 1-\frac{g_\b+g_\g}{g_{\b+\g}}\big)\big(1-\frac{g_\a}{g_{\a+\b+\g}}\big) = \big(1-\frac{g_\a}{g_{\a+\b}}\big)\big(1-\frac{g_\g+g_{\a+\b}}{g_{\a+\b+\g}}\big).
\end{equation}
The equation $\nabla^b_{E_\b}T^b(E_\a,E_\g)=0$ can be worked out similarly yielding
\begin{equation}\label{2} \big(1-\frac{g_\b}{g_{\a+\b}}\big)\big( 1-\frac{g_\g+g_{\a+\b}}{g_{\a+\b+\g}}\big) = \big(1-\frac{g_\b}{g_{\b+\g}}\big)\big(1-\frac{g_\a+g_{\b+\g}}{g_{\a+\b+\g}}\big).
\end{equation}
Now we claim that $g_{\b+\g}= g_\b = g_\g$. Indeed, otherwise we would have $g_{\b+\g}=g_\b+g_\g$. From \eqref{2} we have $g_{\a+\b+\g}=g_\a+g_{b+\g}=g_\a+g_\b+g_\b$. But
$g_{\a+\b+\g}= g_{(\a+\b)+\g}$ is equal either to $g_{\a+\b}=g_\a$ contradicting $g_\b+g_\g\neq 0$, or to $g_{\a+\b}+g_\g = g_\a+g_\g$, contradicting $g_\b\neq 0$. Therefore from \eqref{1} we have $g_{\a+\b+\g}= g_\a$ and we have proved our claim.
From now on we suppose that $\gk$ {\bf is simply laced}. Suppose the metric is not K\"ahler, i.e. there exist two roots $\a,\b\in R_\gm^+$ so that $\a+\b\in R$ and  $g_{\a+\b}\neq g_\a+g_\b$. It then follows that $g_\a=g_\b=g_{\a+\b}$.\par
(b)\ We consider the roots $\{\lambda,\mu,\beta\}$. Note that $g_{\lambda+\mu+\beta}$ being a root implies that $B(\lambda+\mu,\beta)=-\frac 12 c^2$, so that either $B(\lambda,\beta)=-\frac 12 c^2$ or $B(\mu,\b)=-\frac 12 c^2$, so that we can suppose $\lambda+\beta\in R$.  If we prove that $g_{\lambda+\mu} \neq g_\lambda+g_\mu$, then we can apply (a) and prove our claim.
Assume on the contrary that $g_{\lambda+\mu} = g_\lambda+g_\mu$. Then $g_{\lambda+\mu+\beta}=g_{\a+\b}= g_\a = g_\lambda + g_\mu$. On the other hand $g_{(\lambda+\beta)+\mu}$ is root, hence we have two cases: (1) $g_{(\lambda+\beta)+\mu}=g_\mu$, that is not possible as $g_{(\lambda+\beta)+\mu}= g_\lambda + g_\mu > g_\mu$; (2) $g_{(\lambda+\beta)+\mu}=g_{\lambda+\b}+g_\mu$ and in this case $g_\lambda+g_\mu=g_{(\lambda+\beta)+\mu}=g_{\lambda+\b}+g_\mu$, so that
$g_{\lambda+\b}=g_\lambda$. Therefore $g_{\lambda+\b}=g_\lambda=g_\b$, whence $g_\a=g_\lambda + g_\mu = g_\b + g_\mu$ leading to $g_\mu=0$, a contradiction.
\end{proof}

 Using the previous Proposition, we can now prove Theorem \ref{SU} stated in the Introduction

\begin{proof}[{\bf{Proof of Theorem \ref{SU}.}}] The algebra $\gk= \su(n+1)$ is simply laced and therefore Proposition \ref{simply} applies. We fix the standard maximal abelian subalgebra $\gt$ of $\gk$ together with the system of positive roots given by $R^+ = \{\eps_{i,j}:=\epsilon_i-\epsilon_j| 1\leq i<j\leq n+1\}$. An invariant {\rm BTP} Hermitian metric $g$ on $F$ is determined by the positive numbers $\{g_\a\}_{\a\in R^+}$ and we will indicate $g_{i,j}:=g_{\eps_i-\eps_j}$ for brevity. We suppose that $g$ is not K\"ahler, so that there exists $\a,\b\in R^+$
with $\a+\b\in R^+$ and $g_{\a+\b}\neq g_\a+g_\b$, so that $g_\a=g_\b=g_{\a+\b}$ by Lemma \ref{Lemma1}, (1). We have $\a=\eps_{i,j}$ and $\b=\eps_{j,k}$ for some $1\leq i<j<k\leq n+1$ and moreover we can rescale the metric $g$ so that $g_\a=g_\b=g_{\a+\b}=1$. We claim that $g_\delta=1$ for every positive root $\delta$.\par
We first notice that $\a=\eps_{i,a}+\eps_{a,j}$ with $a=i+1,\ldots,j-1$, and similarly $\b=\eps_{j,b}+\eps_{bk}$ with $b=j+1,\ldots,k-1$, so that using Proposition \ref{simply} (b) repeatedly we obtain that
\begin{equation} \label{start}g_{a,b}= 1\qquad {\rm{if}}\quad i\leq a<b\leq j\quad {\rm {or}}\quad j\leq a<b\leq k.\end{equation}
In particular we have $g_\delta=1$ for every simple root $\delta=\eps_{a,a+1}$ with $a=i,\ldots,k-1$. We claim that $g_\eta=1$ for every simple root $\eta$. Indeed we first observe the following
\begin{lemma}\label{simpleroots} If $\eta_1,\eta_2$ are two simple roots, say $\eta_1=\eps_{a,a+1},\eta_2=\eps_{a+1,a+2}$ with $a=i,\ldots,k-2$, then $g_{\eta_1+\eta_2}=1$.
\end{lemma}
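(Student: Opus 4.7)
The plan is to split into cases according to where the three indices $a,a+1,a+2$ sit relative to $j$. If $a+2\leq j$ or $a\geq j$, then $\{a,a+1,a+2\}$ lies entirely in one of the intervals $\{i,\dots,j\}$ or $\{j,\dots,k\}$, and the desired equality $g_{\eta_1+\eta_2}=g_{a,a+2}=1$ is immediate from \eqref{start}. The only nontrivial case is the \emph{crossover} $a=j-1$, where $\eta_1+\eta_2=\eps_{j-1,j+1}$; this is the heart of the argument.

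For the crossover case, the main idea is to apply Proposition \ref{simply}(a) with $\a'=\eps_{i,j-1}$, $\b'=\eps_{j-1,j}$, and $\g'=\eps_{j,j+1}$. All three are in $R_\gm^+=R^+$ with $g$-value $1$ by \eqref{start}, and the premise of Proposition \ref{simply} holds because $\a'+\b'=\eps_{i,j}=\a$ satisfies $g_{\a'+\b'}=1\neq 2=g_{\a'}+g_{\b'}$. One then checks $\b'+\g'=\eps_{j-1,j+1}$ and $\a'+\b'+\g'=\eps_{i,j+1}$ are positive roots, so the conclusion of (a) gives $g_{\eps_{j-1,j+1}}=g_{\b'+\g'}=g_{\a'}=1$.

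The main obstacle will be the boundary case $i=j-1$, in which the auxiliary root $\eps_{i,j-1}=\a'$ degenerates and Proposition \ref{simply}(a) cannot be invoked as above. I would handle this by swapping tools and applying Lemma \ref{Lemma1}(1) instead. If $k\geq j+2$, apply it to $\lambda=\eps_{j-1,j+1}$ and $\mu=\eps_{j+1,k}$ (whose difference is not a root in type $A_n$): their sum is $\eps_{j-1,k}=\a+\b$ with $g$-value $1$, and $g_\mu=1$ by \eqref{start}, so the two alternatives $g_\lambda+g_\mu=1$ and $g_\lambda=g_\mu=1$ offered by the lemma force $g_{\eps_{j-1,j+1}}=g_\lambda=1$ (the first alternative being absurd). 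If instead $k=j+1$, then $\eps_{j-1,j+1}=\a+\b$ directly and $g_{\eps_{j-1,j+1}}=g_{\a+\b}=1$.

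Beyond this case dichotomy the argument is essentially bookkeeping: verifying in each instance that the differences of the roots used in Lemma \ref{Lemma1}(1) are not in $R$, which is automatic in type $A_n$ because all roots have the form $\eps_a-\eps_b$ and the relevant differences are never of this shape. The key conceptual point is that two a priori distinct decompositions of $\a+\b$ (or $\a$) into positive-root sums pass through $\eps_{j-1,j+1}$, and the {\rm BTP} constraint embodied in Lemma \ref{Lemma1}(1) and Proposition \ref{simply} prevents $g_{\eps_{j-1,j+1}}$ from taking the ``K\"ahler-like'' value $g_{\eps_{j-1,j}}+g_{\eps_{j,j+1}}=2$.
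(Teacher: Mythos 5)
Your proposal is correct and follows essentially the same route as the paper: reduce to the crossover case $a=j-1$ and then invoke Proposition \ref{simply}(a) with an auxiliary root ending at $j-1$ whose $g$-value is pinned to $1$ by \eqref{start}. The only (immaterial) differences are that the paper takes the auxiliary root to be $\eps_{j-2,j-1}$ rather than your $\eps_{i,j-1}$, and it disposes of the degenerate case where $\a$ is simple by symmetry (relabelling so the non-simple root sits on the left) instead of your appeal to Lemma \ref{Lemma1}(1) with $\eps_{j-1,j+1}+\eps_{j+1,k}=\a+\b$; both devices work.
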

\begin{proof} If $i\leq a<a+2\leq j$ or $j\leq a<a+2\leq k$ the sum $\eta_1+\eta_2=\eps_{a,a+2}$ and $g_{\eta_1+\eta_2}=1$ because of \eqref{start}. Therefore we can suppose
$a=j-1$. We can now distinguish two cases. Either $\a,\b$ are both simple coinciding with $\eta_1,\eta_2$ resp. and in this case $g_{\eta_1+\eta_2}=1$ by hypothesis, or say $\a$ is not simple, hence $j\geq i+2$. This means that we can consider $\eta_3:= \eps_{j-2,j-1}$ with $g_{\eta_3+\eta_1}=1$. By applying Proposition \ref{simply}, (a), we get that
$g_{\eta_1+\eta_2}=1$\end{proof}
In the next Lemma, we will consider simple roots and positive roots of height $2$.
\begin{lemma}\label{height} We have $g_\delta=1$ whenever $\delta$ is a simple root or it is a sum of two simple roots.
\end{lemma}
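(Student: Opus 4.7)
The plan is to propagate the equality $g_\delta = 1$, known on the index range $[i,k]$ from \eqref{start} and Lemma \ref{simpleroots}, outward to every simple root $\eps_{c,c+1}$ for $c \in [1,n]$ and every height-two root $\eps_{c,c+2}$ for $c \in [1,n-1]$, by iterated application of Proposition \ref{simply}. The argument splits into a rightward extension, a symmetric leftward extension, and a clean-up of the two height-two roots that straddle the boundary indices $i$ and $k$.

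For the rightward extension I start from the seed $\a = \eps_{i,j}$, $\b = \eps_{j,k}$, which satisfies $g_\a = g_\b = g_{\a+\b} = 1 \ne 2 = g_\a + g_\b$. For each $q \in \{k+1,\ldots,n+1\}$, the choice $\g = \eps_{k,q}$ makes $\b+\g = \eps_{j,q}$ and $\a+\b+\g = \eps_{i,q}$ both roots, so Proposition \ref{simply}(a) yields $g_{\eps_{k,q}} = g_{\eps_{j,q}} = g_{\eps_{i,q}} = 1$; in particular the simple root $\eps_{k,k+1}$ and height-two root $\eps_{k,k+2}$ are covered. To reach pairs with both indices strictly above $k$, I then apply Proposition \ref{simply}(b) to $\a' = \eps_{k,q}$ paired with $\b' = \eps_{j,k}$, noting $\a'+\b' = \eps_{j,q}$ already has $g$-value $1$; the decomposition $\a' = \eps_{k,r}+\eps_{r,q}$ for each $k<r<q$ forces $g_{\eps_{r,q}} = 1$, covering the required simple and height-two roots for all admissible $r > k$. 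A symmetric leftward extension, obtained by swapping the roles of $\a$ and $\b$ in Proposition \ref{simply}(a) with $\g = \eps_{p,i}$ for $p<i$ and then applying Proposition \ref{simply}(b) to $\a'' = \eps_{p,j}$ with $\b'' = \eps_{j,k}$, covers the remaining simple and interior height-two roots with index at most $i-1$.

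At this stage the only possibly uncovered height-two roots are $\eps_{k-1,k+1}$ and $\eps_{i-1,i+1}$, which straddle the boundary indices $k$ and $i$. For $\eps_{k-1,k+1}$ with $k \geq j+2$ I would apply Proposition \ref{simply}(b) to $\a = \eps_{j,k+1}$ (of $g$-value $1$ by the rightward extension) with a suitable auxiliary $\b$ such that $g_\b = 1$ and $\a+\b$ has $g$-value $1$, using the decomposition $\a = \eps_{j,k-1}+\eps_{k-1,k+1}$ to conclude $g_{\eps_{k-1,k+1}} = 1$. A natural choice is $\b = \eps_{k+1,q}$ for some $q > k+1$ when $k \leq n-1$, and $\b = \eps_{p,j}$ for some $p < j$ (obtained from \eqref{start} or the leftward extension) when $k = n$; in the degenerate subcase $k = j+1$ the root $\eps_{k-1,k+1} = \eps_{j,j+2}$ is already produced directly as $\b+\g$ in the rightward extension with $\g = \eps_{k,k+1}$. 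The gap at $\eps_{i-1,i+1}$ is filled by the analogous symmetric argument.

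The main obstacle is the bookkeeping of these two straddling height-two roots, since the direct extensions produced by Proposition \ref{simply}(a) always involve triples of indices sitting on the same side of $i$ or $k$, and the appropriate auxiliary $\b$ needed for the final Proposition \ref{simply}(b) step depends on the relative positions of $i$, $j$, $k$ and $n$. In every configuration, however, such a $\b$ can be found among the roots already shown to have $g$-value $1$, so this clean-up always succeeds and the proof is complete.
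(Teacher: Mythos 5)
Your proof is correct, but it takes a genuinely different route from the paper's. The paper proves the lemma by a sliding-window induction on triples of \emph{consecutive simple} roots: it applies Proposition \ref{simply}(a) to $\{\eps_{a-1,a},\eps_{a,a+1},\eps_{a+1,a+2}\}$, moving one index at a time leftward from $\eps_{i,i+1}$ down to $\eps_{1,2}$ and rightward from $\eps_{k-1,k}$ up to $\eps_{n,n+1}$; each step yields the new simple root and the adjacent height-two root simultaneously, so no straddling special cases ever arise. You instead shoot the long roots $\eps_{i,q},\eps_{j,q},\eps_{k,q}$ (and their mirror images) out of the seed $\a=\eps_{i,j}$, $\b=\eps_{j,k}$ via Proposition \ref{simply}(a) and then break them up with Proposition \ref{simply}(b); this actually proves more than the lemma asserts (it gives $g_{a,b}=1$ for all pairs on a common side of the anchors, which would shorten the remainder of the proof of Theorem \ref{SU}), but at the price of the boundary clean-up. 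Two remarks on that clean-up. First, $\eps_{i-1,i+1}$ is already covered by your own leftward extension: it equals $\eps_{i-1,j}$ when $j=i+1$, and otherwise it is the factor $\lambda$ in the decomposition $\eps_{i-1,j}=\eps_{i-1,i+1}+\eps_{i+1,j}$ to which you apply (b); so only $\eps_{k-1,k+1}$ with $k\ge j+2$ genuinely needs extra work. Second, in the subcase $k=n$ your auxiliary $\b=\eps_{p,j}$ must satisfy $g_{\a+\b}=g_{p,n+1}\ne 2$, which is not known for an arbitrary $p<j$; it does hold for $p=i$, because $g_{i,k+1}=1$ is produced by the very first rightward application of (a), so you should fix $p=i$ there (alternatively, applying (a) directly to $\a=\eps_{j,k-1}$, $\b=\eps_{k-1,k}$, $\g=\eps_{k,k+1}$ handles $\eps_{k-1,k+1}$ uniformly and avoids the case split). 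With that choice pinned down, your argument is complete.
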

\begin{proof} Consider the simple root $\eta=\eps_{i-1,i}$, in case $i\geq 2$. Then by Lemma \ref{simpleroots} and Proposition \ref{simply}, (a), applied to the roots $\{\eps_{i-1,i},\eps_{i,i+1},\eps_{i+1,i+2}\}$ we see that $g_{i-1,i}=1$ and also for the sum $\eps_{i-1,i}+\eps_{i,i+1}$ we have $g_{i-1,i+1}=1$. This argument can be repeated
backwards until we reach the first simple root $\eps_{12}$ and at each step we also obtain that the sum of two consecutive simple roots $\eta_1,\eta_2$ has $g_{\eta_1+\eta_2}=1$.
The same procedure works also starting from the simple root $\eps_{k,k+1}$ until we reach the last simple root $\eps_{n,n+1}$.\end{proof}
To conclude the proof of Theorem \ref{SU}, we will show that $g_{\g}=1$ for every $\g\in R^+$. If $\g$ has height less than two, we can apply Lemma \ref{height}. If the height of $\g$ is bigger than $3$, then $\g=\eta_1+\eta_2+\eta$,
where $\eta_1,\eta_2$ are consecutive simple roots and $\eta\in R^+$. By Lemma \ref{height} we have that $g_{\eta_1+\eta_2}=1$ and therefore by Proposition \ref{simply}, (a), we have $g_\g=1$.\end{proof}
\bigskip

%\vspace{0.3cm}

\section{The case of a compact Lie group with a Samelson structure}
\vspace{0.1cm}
We consider a compact even-dimensional semisimple Lie group $G$ with a Samelson structure $J$. We recall that a Samelson structure $J$ is a left invariant complex structure on $G$ which is constructed starting from a maximal torus $T\subset G$ and  a system of positive roots $R^+$ relative to the Cartan subalgebra $\gt^{\mathbb C}\subset \gg^{\mathbb C}$ (\cite{Samelson}). The complex structure $J$ acts as $ iI$ (resp. $-iI$) on each positive (resp. negative) root space, while it maps $\gt$ onto itself and the restriction $J|_{\gt}$ can be arbitrarily chosen as a complex structure on $\gt$. A well-known result states that every left-invariant complex structure on $G$ is of this type for a suitable choice of a maximal torus $T$ and therefore it is also invariant under the right action of $T$. This accounts to say that the projection $\pi:G\to G/T$ is holomorphic when we endow the full flag manifold $G/T$ with the induced complex structure and $\pi$ can be seen as the Tits fibration (see e.g.\,\cite{A}).\par
The aim of this section is to prove the result stated in Theorem \ref{Sam}, which provides a characterization of BTP metrics on a compact semisimple Lie group $G$ endowed with a Samelson structure under the additional hypothesis that the metric is invariant under right translations by $T$ or, equivalently, it descends to an invariant Hermitian metric on the base of the Tits fibration.

\begin{proof}[{\bf Proof of Theorem \ref{Sam}}.] The Samelson invariant complex structure is determined by the choice of a maximal torus $T\subset G$ and a set of positive roots relative to the Cartan subalgebra $\gt^{\mathbb C}$. The condition that the metric $g$ descends to a metric on the base $G/T$ of the Tits fibration means that the metric $g$ is also invariant under right translations by elements of $T$. \par
Using now the same arguments used in the computation of the Bismut connection on a flag manifold, we see that
\begin{eqnarray}\label{conn2}
&& \nabla^b_{E_\a}E_\b= N_{\a,\b}\left(1-\frac{g_\a}{g_{\a+\b}}\right) \, E_
{\a+\b},\qquad \mbox{if}\ \a,\b\in R^+,\\
&& \nabla^b_{E_\a}E_\b= 0, \qquad \mbox{if}\ \a\in R^+, \b\in R^-,\a\neq -\b,\quad  \a+\b\not\in R\ {\rm{or}}\ \a+\b\in R^+\\
&& \nabla^b_{E_\a}E_\b= N_{\a,\b}\left(\frac{g_\b-g_\a}{g_{\a+\b}}\right) \, E_
{\a+\b},\qquad \mbox{if}\ \a\in R^{\pm},\b\in R^{\mp}, \a+\b \in R^{\mp}.
\end{eqnarray}
In particular the statement in Lemma \ref{Lemma1} holds true.
We now compute for $\a\in R$ and $H\in \gt^{\mathbb C}$
$$d\o(H,E_\a,E_{-\a}) = -\o([H,E_\a],E_{-\a}) + \o([H,E_{-\a}],E_\a) -\o([E_\a,E_{-\a}],H) =$$
$$=  -\a(H)\o(E_\a,E_{-\a})-\a(H)\o(E_{-\a},E_\a)-\o(H_\a,H)= g(H_\a,JH).$$
Moreover
$$g(U(E_\a,E_{-\a}),H) = g([H,E_\a],E_{-\a}) + g([H,E_{-\a}],E_\a) = -\a(H)g_\a +\a(H)g_\a=0,$$
so that
$$U(E_\a,E_{-\a}) = 0.$$
This implies that
$$g(\nabla^b_{E_\a}E_{-\a},H) = g(\nabla^{\mbox{\tiny LC}}_{E_\a}E_{-\a},H) + \frac 12 d\o(JE_\a,JE_{-\a},JH) = $$
$$ =  \frac 12 g(H_\a,H) + \frac 12 d\o(JE_\a,JE_{-\a},JH) = $$
$$= \frac 12 g(H_\a,H) + \frac 12 d\o(E_\a,E_{-\a},JH) = \frac 12 g(H_\a,H) - \frac 12 g(H_\a,H) = 0$$
so that
\begin{equation}\nabla^b_{E_\a}E_{-\a} = 0,\qquad T^b(E_\a,E_{-\a}) = -H_\a.\end{equation}
From the expression of $U$ we see that $U(E_\a,H) =c_\a E_\a$ for some constant $c_\a$ and therefore
$$g(U(E_\a,H),E_{-\a}) = g([E_{-\a},E_\a],H) + g([E_{-\a},H],E_\a) $$
$$-c_\a g_\a = -g(H_\a,H) + \a(H)g(E_\a,E_{-\a}) = -g(H_\a,H) -\a(H) g_\a$$
hence
$$U(E_\a,H) =(\a(H)+\frac{g(H_\a,H)}{g_\a}) E_\a.$$
This implies that
$$\nabla^{\mbox{\tiny LC}}_HE_\a = \frac 12 \a(H)E_\a + \frac 12 (\a(H)+\frac{g(H_\a,H)}{g_\a}) E_\a = \big(\a(H) + \frac{g(H_\a,H)}{2g_\a}\big) E_\a,$$
$$\nabla^{\mbox{\tiny LC}}_{E_\a}H =  \frac{g(H_\a,H)}{2g_\a} E_\a.$$
It follows that
$$g(\nabla^b_HE_\a,E_{-\a}) = g(\nabla^{\mbox{\tiny LC}}_HE_\a,E_{-\a}) +\frac 12 d\o(JH,JE_\a,JE_{-\a}) =$$
$$=-g_\a\a(H)-\frac{g(H_\a,H)}{2}-\frac{g(H_\a,H)}{2} $$
so that
\begin{equation}\nabla^b_HE_\a = \big(\a(H)+\frac{g(H,H_\a)}{g_\a}\big) E_\a\end{equation}
and similarly,
\begin{equation}\label{Hpar}\nabla^b_{E_\a}H = 0.\end{equation}
Therefore
$$T^b(H,E_\a) = \frac{g(H,H_\a)}{g_\a} E_\a. $$
Finally it is immediate to see that
$$\nabla^b_{H_1}H_2 = 0,\qquad T^b(H_1,H_2)=0,\qquad H_1,H_2\in \gt^{\mathbb C}.$$
%Note: if $g=-B$, then $g_\a=1$ and $T^b(H,E_\a)=-\a(h)E_\a= -[H,E_\a]$. \par

\begin{lemma}\label{altern} Let $\a,\b\in R^+$ with $\a+\b\in R^+$. Then $g_{\a+\b}\neq g_\a+g_\b$.
\end{lemma}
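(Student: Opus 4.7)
My plan is to prove the lemma directly, by extracting from the BTP condition the stronger statement $g_\alpha = g_\beta = g_{\alpha+\beta}$. Once this is in hand, the conclusion follows immediately: $g_\alpha + g_\beta = 2 g_{\alpha+\beta} \neq g_{\alpha+\beta}$, because $g_{\alpha+\beta}>0$.

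The key step is to evaluate the BTP relation $(\nabla^b_{E_\alpha} T^b)(E_\beta, H) = 0$ for an arbitrary $H \in \gt^{\mathbb C}$ --- the ``mixed'' component that pairs two positive-root directions with a Cartan direction, which is specific to the full Lie-group (non-flag) setting. Substituting the formulas derived just above in the proof, namely $T^b(E_\beta, H) = -\frac{g(H,H_\beta)}{g_\beta} E_\beta$ (and the analogous one for $\alpha+\beta$), $\nabla^b_{E_\alpha} E_\beta = N_{\alpha,\beta}\bigl(1-\frac{g_\alpha}{g_{\alpha+\beta}}\bigr) E_{\alpha+\beta}$, and $\nabla^b_{E_\alpha} H = 0$, a straightforward computation reduces the BTP equation to
\[
 N_{\alpha,\beta}\!\left(1 - \frac{g_\alpha}{g_{\alpha+\beta}}\right)\, g\!\left(H,\; \frac{H_{\alpha+\beta}}{g_{\alpha+\beta}} - \frac{H_\beta}{g_\beta}\right) = 0
 \qquad \text{for every } H \in \gt^{\mathbb C}.
\]
Since $\alpha+\beta\in R^+$ implies $N_{\alpha,\beta}\neq 0$, one of the remaining two factors must vanish. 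The vector identity $\frac{H_{\alpha+\beta}}{g_{\alpha+\beta}} = \frac{H_\beta}{g_\beta}$, once expanded using $H_{\alpha+\beta}=H_\alpha+H_\beta$ and combined with the linear independence of $H_\alpha$ and $H_\beta$ (which holds because $\alpha$ and $\beta$ cannot be proportional in a reduced root system if $\alpha+\beta$ is a root), would force $g_\beta = 0$, impossible. Hence the first factor vanishes: $g_\alpha = g_{\alpha+\beta}$. The symmetric computation using $(\nabla^b_{E_\beta} T^b)(E_\alpha, H) = 0$ gives $g_\beta = g_{\alpha+\beta}$.

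The only subtle point is identifying this mixed component at the outset: the components of $\nabla^b T^b$ among pure root vectors (those exploited in Lemma \ref{Lemma1}) still leave room for the ``K\"ahler-type'' branch $g_{\alpha+\beta} = g_\alpha + g_\beta$, and it is precisely the extra torsion term $T^b(H, E_\gamma)\neq 0$, specific to the full Lie group $G$ and absent on the flag base, that eliminates this branch and forces the dichotomy to collapse. The rest of the argument is routine linear algebra on $\gt^{\mathbb C}$.
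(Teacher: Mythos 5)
Your proposal is correct and follows essentially the same route as the paper: both expand the mixed BTP component $(\nabla^b_{E_\a}T^b)(H,E_\b)=0$ using $\nabla^b_{E_\a}H=0$ and $T^b(H,E_\g)=\frac{g(H,H_\g)}{g_\g}E_\g$ into the same product of the factor $1-\frac{g_\a}{g_{\a+\b}}$ with a linear functional of $H$. The only difference is organizational: the paper argues by contradiction from $g_{\a+\b}=g_\a+g_\b$, while you observe directly that the second factor cannot vanish for all $H$ (by linear independence of $H_\a,H_\b$ and $g_\b>0$), which yields the slightly stronger conclusion $g_\a=g_\b=g_{\a+\b}$ and then the lemma.
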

\begin{proof} We suppose by contradiction that $g_{\a+\b}= g_\a+g_\b$. Using the expressions for $\nabla^b$ given in \eqref{Hpar} and \eqref{conn}, we compute for every $H\in \gt^{\mathbb C}$
\begin{eqnarray*} 0&=&\nabla^b_{E_\a}T(H,E_\b) = \frac{g(H,H_\b)}{g_\b}\nabla^b_{E_\a}E_\b - T(H,\nabla^b_{E_\a}E_\b)\\
&=& N_{\a,\b}\left(1-\frac{g_\a}{g_{\a+\b}}\right)\cdot\left(\frac{g(H,H_\b)}{g_\b}-\frac{g(H,H_\a+H_\b)}{g_{\a+\b}}\right) E_{\a+\b}\\
&=& N_{\a,\b}\left(1-\frac{g_\a}{g_{\a+\b}}\right)\cdot\frac{g(H,g_\a H_\b-g_\b H_\a)}{g_\b(g_\a+g_\b)}\, E_{\a+\b}.
\end{eqnarray*}
As $\a,\b$ are not proportional, we have $g_\a H_\b\neq g_\b H_\a$ and therefore there exists $H\in \gt^\mathbb C$ so that $g(H,g_\a H_\b-g_\b H_\a)\neq 0$. This implies that $1-\frac{g_\a}{g_{\a+\b}}=0$, hence $g_\a +g_\b = g_{\a+\b}=g_\a$, a contradiction. \end{proof}
We let $\{\a_1,\ldots,\a_r\}$ to be the set of simple roots. As $\a_i-\a_j\not\in R$ for every $i,j=1,\ldots,r$, by Lemma \ref{Lemma1} and Lemma \ref{altern} we have that $g_{\a_i}=g_{\a_j}$ whenever $\a_i+\a_j$ is a root and this happens precisely when the two roots are connected in the Dynkin diagram of $\gg$. As the Dynkin diagram is connected, we see that $g_{\a_i}=g_{\a_j}$ for every $i,j=1,\ldots,r$ and we can normalize this value to be $1$.\par
We now prove that $g_\a=1$ for every $\a\in R^+$. Indeed, we prove our claim on induction on the height of the root $\a$, which is defined as $ht(\a):=\sum_{i=1}^rn_i$, where
$\a=\sum_{i=1}^r n_i\a_i$ for nonnegative integers $n_i$. Given $\a$ with height $q\geq 2$, it is well-known that there exists at least one simple root, say $\a_i$, with $\a-\a_i=\b\in R$. As $ht(\b) = q-1$, by induction hypothesis we have $g_\b=1$. Moreover $\b-\a_i$ is either not a root or a positive root. Using Lemma \ref{altern} and Lemma \ref{Lemma1} we see that $g_\a=g_\b=g_{\a_i}=1$ and this concludes the proof of the first claim.

To prove the second claim, we fix any $J$-Hermitian metric $g_o$ on $\gt$ and extend it to a metric $g$ on $\gg$ by taking $-B$ on each root space. It the follows that the Bismut connection $\nabla^b$ satisfies $\nabla^b_{E_\a}X=0$ for every root $\a$ and every $X\in \gg$, while for $H\in \gt$ we have $\nabla^b_HY=0$ for every $Y\in \gt$ and
$$\nabla^b_{H}E_\a= (\a(H)+ g_o(H,H_\a))E_\a, \qquad \a\in R.$$
To check that $\nabla^b_XT^b=0$ for every $X\in \gg$, we only need to compute: \par
(1) $\nabla^b_HT^b(E_\a,E_\b)$ with $\b\neq -\a$;\par
(2) $\nabla^b_HT^b(E_\a,E_{-\a})$ ;\par
(3) $\nabla^b_HT^b(H',E_\a)$.\par
If we use the fact that $T^b(X,Y) = -[X,Y]$ unless $X=H$ and $Y=E_\a$ with $T^b(H,E_\a) = g(H,H_\a)E_\a$, it is straightforward to verify that $\nabla^bT^b=0$ in the cases (1)-(3) and this concludes the proof.\end{proof}
\begin{remark}\label{rmkSam} The proof actually shows that the {\rm BTP} property does not depend on the restriction of the metric along the fibers of the Tits fibration. Moreover we easily check that the only possibly non-vanishing component of the curvature tensor $R^b$ is given by $R^b(E_\a,E_{-\a})E_\b=-[\b(H_\a)+g(H_\a,H_\b)]E_\b$ for roots $\a,\b$. From this one concludes that $\nabla^bR^b=0$ and the connection $\nabla^b$ is Ambrose-Singer. \end{remark}
\vspace{.3cm}
\section{A conjecture on homogeneous {\rm BTP} metrics}

In this section we focus on {\rm BAS} metrics on compact Hermitian manifolds. According to Definition \ref{defAS},  we recall that a Hermitian metric is called {\rm BAS} when its Bismut connection has parallel torsion and curvature. By the aforementioned result of Ambrose-Singer and Sekigawa, we know that a necessary condition for a {\em {\rm BTP}} metric to be {\rm BAS} is that the metric is locally homogeneous, namely, its universal covering space is a homogeneous Hermitian manifold. Another necessary condition for {\rm BAS} is that the metric needs to be free of  non-locally symmetric K\"ahler de Rham factors, as K\"ahler metrics are always {\rm BTP}, but they are {\rm BAS} only when they are locally Hermitian symmetric. It is then natural to ask the following:
\begin{question} \label{question5.1}
Let $(M^n,g)$ be a compact Hermitian manifold that is {\rm BTP} with its universal cover being a homogeneous Hermitian manifold without K\"ahler de Rham factor. When will $g$ be {\rm BAS}?
\end{question}

The answer is positive when $n=2$ as observed in \cite{NiZheng}, namely, any non-K\"ahler locally Hermitian homogeneous {\rm BTP} surface is always {\rm BAS}. This is because {\rm BTP} surfaces are exactly Vaisman surfaces by \cite[Theorem 2]{ZhaoZ19Str}, and such surfaces (if not K\"ahler) always admit local unitary frames $e$ under which the only possibly non-zero component of the Bismut curvature is $R^b_{1\bar{1}1\bar{1}}$, which equals the Bismut scalar curvature $s^b$, so $\nabla^bR^b=0$ if and only if $s^b$ is constant. When the surface is locally homogeneous, $s^b$ is constant thus it is {\rm BAS}. The converse is certainly true as well. So one could state that:

{\em Given any non-K\"ahler {\rm BTP} surface, it is {\rm BAS} $\Longleftrightarrow$  it has constant scalar curvature $\Longleftrightarrow$ it is locally Hermitian homogeneous.}

Note that here we did not specify which scalar curvature that was used, as on any {\rm BTP} surface the scalar curvature of Levi-Civita or Chern or Bismut connection, or the trace of the first or second Ricci curvature of Chern or Bismut connection, all differ by constants. So any one of them equals to constant means all of them are constants.

 We now give some examples where the answer to Question \ref{question5.1} is positive.

Our first example concerns Hopf manifolds. We remark that in \cite{AV} it is proved that a locally conformally K\"ahler (LCK) manifold is {\rm BTP} if and only if it is Vaisman, so that diagonal Hopf manifolds are the only linear Hopf manifolds (see e.g. Cor. 16.16 in \cite{OV}) that can carry a {\rm BTP} metric.
\begin{example}[Isosceles Hopf manifolds] Consider the Hopf manifold $M^n=({\mathbb C}^n\setminus \{ 0\})/ \Gamma $ with the deck transformation group $\Gamma \cong {\mathbb Z}$ generated by the contraction $f$ from ${\mathbb C}^n\setminus \{ 0\}$ onto itself given by
$$f(z_1, \ldots , z_n) = ( a_1z_1, \ldots , a_nz_n),  $$
where $a_i\in\mathbb C$ satisfy $0<|a_1|=\cdots =|a_n|<1$. We will call such Hopf manifolds {\em isosceles  Hopf manifolds}. In this case, the Hermitian metric $g$ with K\"ahler form
$$ \omega = \sqrt{-1} \, \frac{ \partial \overline{\partial} |z|^2}{|z|^2}  $$
descends to $M^n$. Here and below we write $|z|^2=|z_1|^2 + \cdots + |z_n|^2$. As  is well-known, $(M^n,g)$ is Vaisman, hence by \cite{AV} it is BTP. We claim that it is BAS.

To verify this, let us write $e_i=|z|\frac{\partial}{\partial z_i}$, $\varphi_i = \frac{1}{|z|}dz_i$ for each $1\leq i \leq n$. Then $e$ forms a local unitary frame on $(M^n,g)$ with dual coframe $\varphi$. Under the frame $e$, a straight forward computation leads us to the formula for the matrix of Bismut connection and components of the Chern curvature
\begin{eqnarray*}
&& \theta^b \ = \ \frac{1}{2}(\partial - \overline{\partial}) \log |z|^2 I + \frac{1}{|z|^2} \big( d\overline{z} \,^t\!z - \overline{z} \,d\,^t\!z \big), \\
&&  R_{i\bar{j}k\bar{\ell}} \ = \ \xi_{ij} \delta_{k\ell}, \ \ \ \ \ \ \ \ \ \ \xi \ = \ I - \frac{1}{|z|^2} \overline{z}\,^t\!z.
\end{eqnarray*}
We compute the matrix commutator
\begin{eqnarray*}
 [\theta^b, \xi] & = & [ \, \frac{1}{|z|^2} \big( d\overline{z} \,^t\!z - \overline{z} \,d\,^t\!z \big) , \, - \frac{1}{|z|^2} \overline{z}\,^t\!z \, ] \\
 & = & \frac{1}{|z|^4} \{ -d\overline{z} \,|z|^2 \,^t\!z + \overline{z} \,\partial |z|^2 \,^t\!z + \overline{z}\, \overline{\partial} |z|^2 \,^t\!z - \overline{z}\, |z|^2 d\,^t\!z \} \\
 & = & \frac{ d|z|^2}{|z|^4} \,\overline{z} \,^t\!z - \frac{1}{|z|^2} \,d(\overline{z} \,^t\!z) \ = \ d\xi
\end{eqnarray*}
Therefore,
\begin{eqnarray*}
 dR_{i\bar{j}k\bar{\ell}}  & = & d\xi_{ij} \,\delta_{k\ell} \ = \ \big( \theta^b \xi - \xi \theta^b)_{ij} \,\delta_{k\ell} \\
 & = & \sum_r \{  R_{r\bar{j}k\bar{\ell}} \,\theta^b_{ir} -  R_{i\bar{r}k\bar{\ell}} \,\theta^b_{rj} \}  \\
 & = & \sum_r \{  R_{r\bar{j}k\bar{\ell}} \,\theta^b_{ir} -  R_{i\bar{r}k\bar{\ell}} \,\theta^b_{rj} + R_{i\bar{j}r\bar{\ell}} \,\theta^b_{kr} -  R_{i\bar{j}k\bar{r}} \,\theta^b_{r\ell } \}
\end{eqnarray*}
where the last equality is due to the fact that $[\theta^b, I]=0$. This means that $\nabla^bR=0$ which is equivalent to $\nabla^bR^b=0$ for BTP manifold, hence $(M^n,g)$ is BAS.
%We remark that isosceles Hopf manifolds are locally homogeneous and they are homogeneous if and only if $a_i=a_j$ for every $i,j$.}

\end{example}
\begin{example}[Calabi-Eckmann manifolds] We consider the homogeneous space $M=S^{2m_1+1}\times S^{2m_2+1}= G/H$, where $G = G_1\times G_2$, $H = H_1\times H_2$ and
$G_i = \SU(m_i+1)$, $H_i= \SU(m_i)$ for $i=1,2$. We also fix a reductive decomposition
$$\mathfrak g_i = \gh_i + \gz_i +\gm_i,\quad i=1,2, $$
where $\gz_i$ is the one-dimensional centralizer of $\gh_i$ in $\frak g_i$ and $\gm_i$ is an $\rm{ad}(\gh_i+\gz_i)$-invariant subspace. Note that $\gh_i+\gz_i$ is the Lie algebra of the subgroup $K_i:= \rm{S}(\rm U(1)\times \rm U(m_i))$ and that $G_i/K_i \cong \mathbb CP^{m_i}$. The subspace $\gm_i$ is isomorphic to the tangent space $T_o\mathbb CP^{m_i}$ at the origin $o=eH_i$ and it is endowed with a ${\rm Ad}(K_i)$-invariant complex structure $J_i$.

We can endow $M$ with an invariant complex structure $J$ as follows (see e.g.\,\cite{Tsukada},\,\cite{B12}). We fix generators $z_i$ of $\gz_i$ and we put $\gq:= {\rm{Span}}(z_1,z_2)$. We then define $J$ so that $J(\gm_i)=\gm_i$ with $J|_{\gm_i}=J_i$ and $J|_{\gq}$ is the endomorphism which can be expressed  by the matrix $\left(\begin{smallmatrix} \alpha & -\frac{1+\alpha^2}\beta \\ \beta& -\alpha\end{smallmatrix}\right)$ for some $\alpha,\beta\in \mathbb R,\beta\neq 0$, with respect to the basis $\{z_1,z_2\}$. The complex structure $J$ is easily seen to define an invariant integrable complex structure on $M$.

Moreover, we can define $J$-hermitian invariant metrics $g$ on $M$ in the following way. We decree
$$g(\gm_i,\gm_j)=0\ {\rm{if}}\, i\neq j,\qquad g(\gm_i,\gq)=0,\, i=1,2,$$
$$g|_{\gm_i\times \gm_i} = -c_iB_i|_{\gm_i\times \gm_i},\, i=1,2,$$
where $c_i<0$ are negative real numbers and $B_i$ denotes the Cartan-Killing form of $\frak g_i$. We then define $g$ on the subspace $\gq$ to be any $J$-Hermitian metric: any such $g$ is a positive multiple of the metric whose matrix is $\left(\begin{smallmatrix} 1 & -\frac\alpha \beta \\ -\frac\alpha\beta& \frac{1+\alpha^2}{\beta^2}\end{smallmatrix}\right)$ w.r.t. the basis $\{z_1,z_2\}$.

We will show that all these homogeneous spaces $(M,g,J)$, which are called Calabi-Eckmann manifolds, are naturally reductive, so that the corresponding Bismut connections have parallel curvature, hence $g$ is ({\rm BAS}).

In order to do this, we will enlarge the group $G=G_1\times G_2$ and observe that the isometry group of $(M,g)$ contains the group $\hat G:= G\times A$, where $A$ is a compact $2$-dimensional torus acting on $M$ as follows. We denote by $Q$ the $2$-torus with Lie algebra $\gq$ and we fix an isomorphism $\rho:A\to Q$; then an element $a\in A$ acts on $M$ as $a\cdot xH := x\rho(a)H$. This action is holomorphically isometric as both $J$ and $g$ are ${\rm{Ad}}(Q)$-invariant. The isotropy subgroup $\hat H$ at $o$ is given by
$$\hat H = \{(g,q)\in \hat G|\ g\rho(a)\in H\} = \{(h\rho(a)^{-1},a)|\ a\in A,h\in H\}.$$
We consider a reductive ${\rm{ad}}(\hat \gh)$-invariant decomposition
\begin{equation}\label{reddec}\hat \gg = \hat \gh + V_f + \hat\gm\end{equation}
where $\hat\gm = \{(x,0)\in \hat\gg|\ x\in \gm_1+\gm_2\}$ and
$$V_f=\{(q,f(q))\in \gq+\ga|\, q\in \gq\}$$
for some linear $f:\gq \to \ga$ such that $f\rho_*\in {\rm{End}}(\ga)$ does not have $\lambda = -1$ as an eigenvalue (hence $V_f\cap \hat \gh = \{0\}$).

It is not difficult to show that we can choose a suitable $f$ so that the metric $g$ expressed in the reductive decomposition \eqref{reddec} satisfies the naturally reductive condition and we leave the details to the reader.
\end{example}
\begin{example}[Vaisman solvmanifolds] If $M =G/\Gamma$ is a solvmanifold, where $G$ is a solvable group and $\Gamma$ is a cocompact lattice in $G$, then a Vaisman LCK-structure $(g,J)$ induced by a left-invariant LCK-structure on $G$ is {\rm BAS}, as it has been shown in \cite{AV}.
\end{example}
\begin{example} In the previous sections, we have provided other significant examples, which can be summarized as follows:
\begin{itemize}
\item[i)] Any Chern flat Hermitian metric on a compact complex manifold is {\rm BAS} whenever it is {\rm BTP} (see Proposition \ref{ChernflatBAS});
\item[ii)] any invariant {\rm BTP} metric on a generalized flag manifold with at most two irreducible summands or on the full flag manifolds $\SU(n+1)/\T^n$ is either K\"ahler or BAS (see Proposition \ref{flag2summ} and Theorem \ref{SU});
    \item[iii)] given a compact Lie group $G$ endowed with a Samelson complex structure w.r.t. a maximal torus $\T$, any left invariant {\rm BTP} metric on $G$ which is invariant by right $\T$-translations is {\rm BAS} (see Theorem \ref{Sam}, Remark \ref{rmkSam}).
\end{itemize}
\end{example}
At this point in time we do not know whether or not the answer to Question \ref{question5.1} is always yes when $n=3$. However,  we are able to exhibit an example of
a compact homogeneous complex $4$-fold $M^4$ which is endowed with a {\rm BTP} metric that is not {\rm BAS}.  The manifold $M^4$ is defined as the product $N\times S^1$, where
$N$ is a Sasaki homogeneous manifold fibering over the Wallach $3$-fold $W^3:= \SU(3)/\T$, where $\T\subset \SU(3)$ is the maximal torus given by all diagonal matrices in $\SU(3)$. Then $M^4$ is Vaisman and therefore {\rm BTP}. For the sake of clarity, we will carry out the construction in a detailed way.  \par
Namely, we consider the Wallach $3$-fold $W^3:= \SU(3)/\T$ and we will denote by $R$ the root system corresponding to $\T$ and with the standard choice of simple roots $\{\a:=\epsilon_{12},\beta:=\epsilon_{23}\}$. This choice determines the Weyl chamber $\mathcal C=\{v\in\gt \,|\,i\a(v)>0,i\b(v)>0\}$, which in turn determines an invariant complex structure $J_o$ on $W^3$ : in the complexification $\gm^{\mathbb C}$ the subspace $\gm^{1,0}$ is spanned by the root spaces corresponding to positive roots.

We now select an element $\ell\in\mathcal C$ that generates the Lie algebra $\gl$ of a one-dimensional torus $\rm L$. If we fix the ${\rm{Ad}}(\T)$-invariant decomposition ${\frak {su}}(3)=\gt +\gm$ with $\gm$ being the $B$-orthogonal complement of $\gt$, it is well-known (see e.g.\,\cite{BFR}) that $\ell\in \mathcal C$ determines an invariant K\"ahler metric $g_o$ on $W$ whose K\"ahler form $\omega_o$ is given by
\begin{equation} \omega_o(x,y) = B([\ell,x],y),\quad x,y\in\gm,\end{equation}
where $B$ denotes the Cartan Killing form of $\frak{su}(3)$. For simplicity we may fix $\ell= i\,{\rm{diag}}(a_1,a_2,-a_1-a_2)$ with $a_1,a_2\in\mathbb Z\backslash\{0\}$ satisfying
$a_1<a_2<-\frac 12 a_1$.

The subspace $\gh$ of $\gt$ which is $B$-orthogonal to $\gl$ is generated by $h:=i\,{\rm{diag}}(2a_2+a_1,-2a_1-a_2,a_1-a_2)$ and it is the Lie algebra of a compact one-dimensional torus $\rm H$. The fibration $\SU(3)/\rm H\to W^3$ is a Boothby-Wang fibration and we define $M^4 := (\SU(3)/\rm H) \times S^1$.

The homogeneous manifold $M^4$ can be endowed with an invariant complex structure as follows. If $\gz$ denotes the Lie algebra of the $\rm S^1$-factor, we have an ${\rm{Ad}}(\rm H)$-invariant decomposition
$$\frak{su}(3)+\gz = \gh + \gl + \gm +\gz $$
where the subspace $\gp:= \gl + \gm +\gz $ identifies with the tangent space $TM^4$ at the origin. We can then define an invariant almost complex structure $J$ by taking $J_o$ on $\gm$ and by decreeing $J(\gz)=\gl$. It is immediate to verify that $J$ defines an integrable complex structure on $M^4$. Moreover we can define an invariant $J$-Hermitian metric $g$ by taking $g|_{\gm\times\gm}= g_o$, $g(\gm,\gl+\gz)=0$ and $g(\ell,\ell) = -B(\ell,\ell)$.  Note that the product $(\SU(3)/\rm H) \times S^1$ is an
isometric splitting and $\rm S^1$ acts by holomorphic isometries.

\begin{proposition} The homogeneous Hermitian manifold $(M^4,J,g)$ is {\rm BTP} but not {\rm BAS}.
\end{proposition}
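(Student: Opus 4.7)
The plan is to prove the two assertions separately. For the BTP part, the first step is to recognize $(M^4,g,J)$ as a Vaisman manifold. By construction, $N=\SU(3)/\rm H$ is the total space of the Boothby-Wang circle bundle associated with the integral K\"ahler form $\omega_o$ on the Wallach threefold $(W^3,g_o,J_o)$, so $N$ inherits a regular Sasakian structure whose Reeb direction corresponds to $\ell\in\gl$. The invariant complex structure $J$ on $M^4=N\times S^1$ has been tailored precisely so that $J$ sends this Reeb direction to the $S^1$-factor and restricts to $J_o$ on the horizontal distribution; this is the standard Vaisman complex structure on the product of a Sasakian manifold with a circle, and the product metric $g$ has parallel Lee form. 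The Andrada-Vezzoni theorem of \cite{AV} then gives that $g$ is BTP.

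For the non-BAS claim I argue by contradiction via Sekigawa's Hermitian Ambrose-Singer theorem: if $g$ were BAS, the universal cover $(\widetilde M,\tilde g,\tilde J)$ would be naturally reductive with respect to some transitive Lie group of holomorphic isometries. The first step is to identify the connected holomorphic isometry group $\hat G:=\mathrm{Iso}^0_h(M^4,g,J)$. Since $M^4=N\times S^1$ is simultaneously a holomorphic and isometric splitting, and since the Wallach metric $g_o$ is not proportional to $-B$ (so the connected isometry group of $W^3$ is the expected $\SU(3)/Z$), one obtains $\hat G=(\SU(3)\times S^1)/\Gamma_0$ for a finite central $\Gamma_0$. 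A dimension count then shows that any transitive closed subgroup with compact isotropy must coincide with $\hat G$ up to finite covers, with one-dimensional isotropy $\rm K$ projecting isomorphically onto $\rm H$ under the $\SU(3)$-factor; in particular $\gk=\{(h,\phi(h))\mid h\in\gh\}$ for some linear $\phi:\gh\to\gz$.

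The next step is to enumerate the $\mathrm{Ad}(\rm K)$-invariant reductive complements $\gp\subset\su(3)\oplus\gz$ and test the naturally reductive identity $g([x,y]_\gp,z)+g(y,[x,z]_\gp)=0$. Because $\rm K$ acts non-trivially on every root space of $\gm$ but trivially on $\gl\oplus\gz$, every admissible $\gp$ has the form $\gp_f=\gm\oplus\{w+f(w)\mid w\in\gl\oplus\gz\}$ for some linear $f:\gl\oplus\gz\to\gh$, and in particular $\gm\subset\gp_f$ for every $f$. Taking $x=v_\alpha$, $y=v_\beta$, $z=v_{\alpha+\beta}$ for the two simple roots $\alpha=\epsilon_{12}$, $\beta=\epsilon_{23}$ of $\su(3)$, and noting that $\alpha-\beta\notin R$ so that $[v_\alpha,v_\beta]=N_{\alpha,\beta}v_{\alpha+\beta}\in\gm\subset\gp_f$ independently of $f$, the naturally reductive identity collapses to $g(v_{\alpha+\beta},v_{\alpha+\beta})=g(v_\beta,v_\beta)$. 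A direct computation from $\omega_o(x,y)=B([\ell,x],y)$ gives $g(v_\alpha,v_\alpha)=2(a_2-a_1)$, $g(v_\beta,v_\beta)=-2(2a_2+a_1)$, and $g(v_{\alpha+\beta},v_{\alpha+\beta})=-2(2a_1+a_2)$, so the equality above becomes $2a_1+a_2=2a_2+a_1$, i.e.\ $a_1=a_2$, contradicting $a_1<a_2$.

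The step I expect to be the most delicate is the isometry-group identification together with the verification that ``graph-type'' isotropies $\phi\neq 0$ do not rescue natural reductivity, so that all the twisted complements $\gp_f$ must be handled uniformly. A parallel safety-net approach, bypassing any classification, is to exploit the Vaisman formula $T^b=-d^c\omega$ and the identity \eqref{R^b} to compute $\nabla^bR^b$ at the origin directly in the real frame $\{v_\alpha,w_\alpha,\ell,z\}$ and exhibit a single non-vanishing covariant derivative, which proves non-BAS without invoking Sekigawa's theorem.
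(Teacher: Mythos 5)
Your BTP half follows the paper's route (Vaisman $\Rightarrow$ BTP via \cite{AV}), though you assert rather than verify the Vaisman property; the paper checks $d\omega=\theta\wedge\omega$ with $\theta=(J\ell)^\flat$ parallel by a short direct computation. One inaccuracy already appears here: $N\times S^1$ is an isometric but \emph{not} a holomorphic splitting, since $J$ interchanges $\gl\subset TN$ and $\gz=TS^1$; this cannot be used to compute the holomorphic isometry group by factors.

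The non-BAS half has a genuine gap: the connected holomorphic isometry group is misidentified. Besides the left action of $\SU(3)\times S^1$, right translations by the one-dimensional torus $\mathrm{L}$ generated by $\ell$ (well defined on $\SU(3)/\mathrm{H}$ because $\mathrm{L},\mathrm{H}\subset \T$ commute) preserve both $J$ and $g$, as these are $\ad(\gl)$-invariant, and they are not contained in the left action since the center of $\SU(3)$ is finite. Hence $\mathrm{Iso}^0_h(M^4,g,J)$ is $10$-dimensional, locally isomorphic to $\SU(3)\times \mathrm{A}\times S^1$ (the paper pins this down using Blanchard's theorem and the fact that $\mathrm{Bihol}(W^3)$ is locally $\mathrm{SL}(3,\mathbb C)$), not your $9$-dimensional $(\SU(3)\times S^1)/\Gamma_0$. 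Your enumeration of transitive subgroups and reductive complements is therefore incomplete: you must also exclude natural reductivity for the full $10$-dimensional group, whose isotropy is the $2$-dimensional graph $\{(y-\rho_*(x),x,0)\mid x\in\ga,\ y\in\gh\}$, and for $9$-dimensional transitive subgroups involving the $\mathrm{A}$-factor. The core mechanism you use does survive in all these cases --- since no root annihilates $h$, the module $\gm$ contains no trivial isotropy submodule and hence lies in every reductive complement, and your root-vector test (which is correct and matches the paper's: the K\"ahler relation $g_{\a+\b}=g_\a+g_\b$ forces $a_1=a_2$) applies verbatim --- so the argument is repairable, but as written it does not show that \emph{no} transitive group of holomorphic isometries exhibits $M^4$ as naturally reductive. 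The concluding ``safety net'' of computing $\nabla^bR^b\neq 0$ directly is only announced, not carried out, so it cannot close the gap.
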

\begin{proof} First of all we will show that $(M^4,J,g)$ is LCK and in particular Vaisman, so that $g$ is {\rm BTP} by the result in \cite{AV}.
If $\omega$ denotes the K\"ahler form of $g$, we compute $d\omega$ in the following cases.

(1)\ When $x,y,z\in\gm$, then
$$d\omega(x,y,z)= -\omega([x,y]_\gp,z) + \omega([x,z]_\gp,y) + \omega([y,z]_\gp,x) = 0$$
as $\gm$ is $J$-stable and $g_o$ is K\"ahler.

(2)\ When $x,y\in\gm$, then
\begin{eqnarray*}d\omega(x,y,\ell) &=& -\omega([x,y]_\gp,\ell)+ \omega([x,\ell],y) -\omega([y,\ell],x)\\
&=& g([x,y]_\gp,J\ell) + \omega([x,\ell],y) +\omega(x,[y,\ell])=0
\end{eqnarray*}
as $[x,y]_\gp\in (\gl+\gm) \perp \gz$ and $\omega|_{\gm\times\gm}$ is $\rm{ad}(\gt)$-invariant by construction.

(3)\ When $x,y\in \gm$, then as $[J\ell,\frak{su}(3)]=0$ we have
\begin{eqnarray*}d\omega(x,y,J\ell) &=& -\omega([x,y]_\gp,J\ell)+ \omega([x,J\ell],y) -\omega([y,J\ell],x)\\
&=& -g([x,y]_\gp,\ell) = -g([x,y]_\gl,\ell) = B([x,y],\ell) = B([\ell,x],y) = \omega(x,y).
\end{eqnarray*}
(4)\ When $x\in\gm$, then
$$d\omega(x,\ell,J\ell) = -\omega([x,\ell]_\gp,J\ell)+ \omega([x,J\ell],\ell) -\omega([\ell,J\ell],x) = 0.$$
Therefore if we set $\theta = (J\ell)^\flat$, we obtain $d\omega=\theta\wedge\omega$.  As $J\ell\in\gz$ and the splitting $(\SU(3)/\rm H) \times S^1$ is isometric, we see that $\theta$ is parallel w.r.t. the Levi Civita connection of $g$ and therefore the manifold is LCK, in particular Vaisman.

We will now show that the metric is not {\rm BAS} by proving that the homogeneous space $M^4$ is not naturally reductive. To this end, we first characterize the connected component of the full group $\rm G$ of holomorphic isometric transformations of $M^4$. The group $\rm G$ is clearly compact.
\begin{lemma} The group $\rm G$ is locally isomorphic to $\SU(3) \times \rm A \times {\rm S}^1$, where $\rm A$ is a one-dimensional torus acting on $M^4$ by right translations.
\end{lemma}
\begin{proof} We fix an isomorphism $\rho:\rm A\to \rm L$  and we let $A$ act on $M^4$ by means of right translations by corresponding elements of $\rm L$. This action is holomorphically isometric because the complex structure $J$ and the metric $g$ are both $\ad(\gl)$-invariant. Therefore the compact group $\rm G$ contains a subgroup $\rm U$ locally isomorphic to  $\SU(3) \times A \times {\rm S}^1$. On the other hand the projection $\pi:M^4\to W^3$ is holomorphic and by Blanchard theorem (see \cite{A}, p.45) every element of $\rm G$ preserves the fibers of the fibration $\pi$ and descends to a holomorphic transformation of $W^3$. It is known that the group $B$ of biholomorphisms of $W^3$ is locally isomorphic to $\rm{SL}(3,\mathbb C)$ (see \cite{A}) and therefore the induced homomorphism $\phi:\rm G\to B$ maps $\rm G$ onto a compact subgroup of $B$ acting transitively on $W^3$, hence locally isomorphic to $\SU(3)$. If $N:=\ker \phi$, then
$\rm G$ is locally isomorphic to $\SU(3)\cdot N$. The compact group $N$ maps each fiber of $\pi$ onto itself and the induced action on each of them is faithful because $N$ acts by isometries and it acts trivially on the base $W^3$. Since the fibers are tori, we see that $\dim N\leq 2$  and therefore $\dim \rm G \leq \dim \SU(3) + 2 = \dim \rm U$, proving our claim.\end{proof}
We now prove that $M^4$ is not naturally reductive. Indeed, let $\rm G'$ be a connected subgroup of $\rm G$ acting transitively on $M^4\cong\rm G'/\rm H'$ for some subgroup $\rm H'\subseteq \rm G'$. The closure $\overline{\rm G'}$ in $\rm G$ is compact and the same argument as in the proof of the previous Lemma shows that $\overline{\rm G'}$ is locally isomorphic to $\SU(3)\cdot N'$ for some abelian group $N'$. As $\rm G'$ is normal in the compact $\overline{\rm G'}$, we conclude that $\SU(3)\subseteq\rm G'$, hence $\rm H\subseteq \rm H'$. As $\SU(3)$ is not transitive on $M^4$, we have that $\dim \rm G'\geq 9$, hence either $\dim \frak g' = \dim\frak g - 1$ or $\rm G=\rm G'$.
%or $\rm G' = \SU(3)\cdot \rm U$, where $\dim \rm U = 1$.
Now suppose we have  a reductive decomposition $\frak g' = \gh' +\gp'$ and that the metric $g$ on $\gp'$ satisfies the natural reductiveness condition for all $x,y,z\in\gp'$
\begin{equation}\label{red} g([x,y]_{\gp'},z) = - g(y,[x,z]_{\gp'}).\end{equation}
When $\dim \frak g' = \dim\frak g - 1$ we have $\gh=\gh'\subset \mathfrak {su}(3)$ by a dimension count. The $\gh$-module $\gm$ is contained in $\frak{su}(3)\subset \frak g'$ and moreover it does not contain any trivial $\gh$-module as the element $h$ is not annihilated by any root. This implies that $\gm\subset \gp'$.
%We can find an $\rm{Ad}(\SU(3))$-invariant decomposition
%$\frak g' = \frak{su}(3) + \gu$, where $\gu$ is a one-dimensional subspace with $[\frak{su}(3),\gu]=0$.
%Therefore we can write $\frak g' = \gh + \gm +\gl+ \gu$ as an $\gh$-module. We now observe that $\gm$ does not contain any trivial $\rm{ad}(\gh)$-module as the element $h$ is not annihilated by any root.
But then equation \eqref{red} does not hold for suitable $x,y,z\in\gm$. Indeed, we consider the complexification of $\gm$ and root vectors $E_\a,E_\b,E_{-\a-\b}\in \gm^\mathbb C$. Then \eqref{red} would give
$$0 = g([E_\a,E_\b],E_{-\a-\b}) + g(E_\b,[E_\a,E_{-\a-\b}]) = N_{\a,\b}\left ( g(E_{\a+\b},E_{-\a-\b}) - g(E_\b,E_{-\b})\right),$$
while for K\"ahler metrics it is well known that $g(E_{\a+\b},E_{-\a-\b})= g(E_\a,E_{-\a})  + g(E_\b,E_{-\b})\neq g(E_\b,E_{-\b})$, leading to a contradiction.

When $\frak g' = \frak g$, we have that $\gh' = \{(y-\rho_*(x),x,0)\in \frak{su}(3)\oplus\ga\oplus\gz|\ x\in \ga, y\in\gh\}$, where $\rho_*$ denotes the differential of $\rho$ at the identity, hence $\gm\subset\frak{su}(3)\subset \frak g'$ is a $\gh'$-module. As $\gh\subset\gh'$, the $\gh'$-module $\gm$ does not contain non-trivial submodules and therefore $\gm\subset \gp'$. As in the previous case this leads to a contradiction. \end{proof}
Note that  the manifold $M^4$ we have constructed has infinite fundamental group and  we would like to propose the following conjecture which has already been stated in the Introduction as Conjecture \ref{conj1.6}:

\begin{conjecture} \label{conj5.2}
Let $(M^n,g)$ be a compact Hermitian manifold whose universal cover is a homogeneous Hermitian manifold without K\"ahler de Rham factor. Assume $g$ is {\rm BTP}, and the manifold is one of the following
\begin{enumerate}
\item $M^n$ has finite fundamental group, or
\item the universal cover of $(M^n,g)$ is a Lie group equipped with a left-invariant metric and a compatible left-invariant complex structure.
\end{enumerate}
Then $g$ must be {\rm BAS}.
\end{conjecture}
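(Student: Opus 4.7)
The plan is to reduce both cases to a pointwise statement at a single point and then exploit the rigidity provided by homogeneity together with the BTP identities. First I would invoke the equivalences $\nabla^b T^b = 0 \iff \nabla^b T = 0$ and $\nabla^b R^b = 0 \iff \nabla^b R = 0$ from the preliminaries together with formula~(\ref{R^b}) expressing $R^b - R$ in terms of $T$ and its $\nabla^b$-covariant derivatives. Under BTP the first-derivative terms in~(\ref{R^b}) vanish, so $R^b - R$ reduces to a $\nabla^b$-parallel quadratic polynomial in $T$; the conjecture is therefore equivalent to showing $\nabla^b R = 0$ under each of the two homogeneity assumptions.

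For case~(2), the universal cover is a Lie group $G$ with left-invariant $g$ and $J$, so $T^b$, $R^b$ and $\nabla^b R^b$ are all left-invariant tensors and it suffices to verify $\nabla^b R^b = 0$ at the identity. I would write $\nabla^b$ explicitly at the Lie algebra level as a map $\gg \to \mathrm{End}(\gg)$ depending only on the bracket and the metric, generalising the identity $\nabla^b_x = \ad(x) - \ad(x)^\ast$ used in the proof of Theorem~\ref{ThmChernflat}, translate the BTP identities~(\ref{BTP}) into a finite quadratic system in the structure constants, and then check that any solution of this overdetermined system forces the first Bianchi-type identity that expresses $\nabla^b R^b|_e = 0$. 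Evidence that this can be pushed through is provided by the Chern flat case (Proposition~\ref{ChernflatBAS}), the Samelson case (Theorem~\ref{Sam} and Remark~\ref{rmkSam}), and the nilpotent case (Proposition~\ref{prop5.5}). The main obstacle is that no structural description of general left-invariant BTP data is available; one would probably have to pass through the Levi decomposition, handle the semisimple and solvable parts separately, and control the coupling between them through the left-invariant complex structure.

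For case~(1) I would work on the universal cover $\tilde M = G/H$, a compact simply-connected homogeneous Hermitian manifold without K\"ahler de Rham factor, and aim at a decomposition theorem compatible with the BTP structure. The $\nabla^b$-parallel torsion $T^b$ together with the $\nabla^b$-parallel image and kernel distributions it determines should split $T\tilde M$ into holomorphic invariant subbundles; the goal is to show that each irreducible piece lies in an already understood class: a Hermitian symmetric factor (trivially BAS), a flag manifold of the type covered by Theorems~\ref{flag2summ}--\ref{SU}, a Wang or Samelson-type space covered by Theorem~\ref{Sam}, or a Vaisman factor (known to be BAS). If each irreducible factor is BAS and the BTP condition is compatible with the splitting, then the conjecture follows by assembling the factors.

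The hardest step, and the reason the statement is posed as a conjecture, is making this decomposition rigorous: no de Rham--type splitting theorem is presently known for BTP manifolds, and a classification of irreducible homogeneous Hermitian BTP manifolds without K\"ahler factor is at present out of reach even at the level of flag manifolds (only the two-summand case and $\SU(n+1)/\T^n$ are treated in the paper). This is precisely why Propositions~\ref{prop5.3} and~\ref{prop5.5} handle only low dimension and the nilpotent case, where a direct argument circumvents the missing classification; a general proof will most likely require new structural input, either a BTP de~Rham theorem or a sharp analogue of the Ambrose--Singer--Sekigawa reduction adapted to the BTP class.
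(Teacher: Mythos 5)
The statement you are addressing is stated in the paper as a \emph{conjecture} (Conjecture \ref{conj1.6} = Conjecture \ref{conj5.2}), and the paper does not prove it: it only offers supporting evidence through examples and through two special cases, Proposition \ref{prop5.3} ($n=3$ with finite fundamental group, proved via the classification of non-balanced and balanced {\rm BTP} threefolds from \cite{ZhaoZ22} and admissible frames) and Proposition \ref{prop5.5} (nilmanifolds with nilpotent $J$, proved by an explicit frame computation). Your proposal is therefore not comparable to a proof in the paper because none exists; and, to your credit, you say so explicitly. Your preliminary reductions are sound and match the paper's: under {\rm BTP} the covariant-derivative terms in \eqref{R^b} vanish, so $\nabla^b R^b=0$ is equivalent to $\nabla^b R=0$, and in case (2) everything is left-invariant so the problem localizes at the identity. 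But from that point on the proposal is a research program, not an argument: for case (2) you say one should "check that any solution of this overdetermined system forces" parallel curvature, which is precisely the open question, and no mechanism is offered for why the quadratic {\rm BTP} identities \eqref{BTP} should imply it beyond the already-known Chern-flat, Samelson and nilpotent cases.

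The concrete gap in case (1) is worth naming because the step as written would fail, not merely remain unfinished. You propose that the $\nabla^b$-parallel image and kernel distributions of $T^b$ "should split $T\tilde M$ into holomorphic invariant subbundles" and reduce to factors from a known list. A $\nabla^b$-parallel, $J$-invariant distribution need not be integrable, and even when integrable it does not yield a metric (de Rham) product unless it is parallel for the Levi-Civita connection; the difference $\nabla^b-\nabla^{\mbox{\tiny LC}}$ is governed by the torsion itself, which is exactly the obstruction. Moreover your candidate list of irreducible pieces (Hermitian symmetric, two-summand flag, $\SU(n+1)/\T^n$, Samelson, Vaisman) is not known to be exhaustive even among homogeneous Hermitian {\rm BTP} spaces --- the paper's Theorems \ref{flag2summ} and \ref{SU} cover only a thin slice of flag manifolds, and the paper's own $4$-dimensional example shows how delicate the interplay between the transitive group and the {\rm BTP} metric is. So the proposal correctly diagnoses why the statement is a conjecture, but it does not constitute a proof, and its central splitting step would need an entirely new structural theorem to be made rigorous.
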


As partial supporting evidence of Conjecture \ref{conj5.2}, we prove the following two special cases. The first one states that Case (1) of Conjecture \ref{conj5.2} holds true if $n=3$, while the second one says that Case (2) of Conjecture \ref{conj5.2} is true for nilmanifolds with nilpotent $J$ (in the sense of Cordero, Fern\'{a}ndez, Gray, and Ugarte \cite{CFGU}). These two results were stated in Introduction as Propositions \ref{prop5.3} and \ref{prop5.5}.  We start with the proof of the first one.

\begin{proof}[{\bf Proof of Proposition \ref{prop5.3}}]
The fundamental group of $M^3$ is assumed to be finite. Replacing $M^3$ by its universal cover if necessary, we may assume that $M^3$ is simply-connected. The metric $g$ is homogeneous and {\rm BTP} and without K\"ahler de Rham factor. Let us divide the discussion into two cases depending on whether $g$ is balanced or not.

{\em Case 1.} $g$ is not balanced.

By the work of \cite{ZhaoZ22}, we know that any non-balanced {\rm BTP} manifold $(M^n,g)$ always admits {\em admissible frames}, which are local unitary frames $e$ so that
$$ \eta_1= \cdots = \eta_{n-1}=0, \ \ \eta_n=\lambda >0, \ \ \ T^j_{in}=a_i\delta_{ij} , \ \ \ \forall \ 1\leq i,j\leq n,$$
where $\lambda >0$, $a_1, \ldots , a_{n-1}$ are global constants on $M^n$, with $a_n=0$ and $a_1+ \cdots + a_{n-1}=\lambda$.  Here as before we denoted by $T^j_{ik}$ the components under $e$ of the Chern torsion, and $\eta = \sum_i \eta_i\varphi_i$ is Gauduchon's torsion $1$-form defined by $\partial \omega^{n-1} = -\eta \wedge \omega^{n-1}$, where $\varphi$ is the coframe dual to $e$, and $\omega$ is the K\"ahler form of $g$. One has $\eta_i = \sum_k T^k_{ki}$. Denote by $\theta^b$, $\Theta^b$ the matrix of connection and curvature under $e$ for the Bismut connection $\nabla^b$, and denote by $R^b$ the curvature of $\nabla^b$. Since $g$ is {\rm BTP}, one has $\nabla^b e_n=0$, hence $\theta^b_{n\ast}=0$ and $\Theta^b_{n\ast}=0$.

Now let us specify to our case $n=3$. First consider the case when $a_1\neq a_2$. We claim that we must have $\theta^b_{12}=0$ in this case. To see this, we compute from $\nabla^bT=0$ that
$$ 0 = (\nabla^bT)^2_{13} = d (T^2_{13}) - \sum_r \big( T^2_{r3}\theta^b_{1r} +T^2_{1r}\theta^b_{3r} - T^r_{13}\theta^b_{r2} \big) = (a_1-a_2)\theta^b_{12}.$$
Therefore $\theta^b$, hence $\Theta^b$, is diagonal. The only possibly non-zero components of $R^b_{i\bar{j}k\bar{\ell}} = \Theta^b_{k\ell }(e_i, \overline{e}_j)$ are $R^b_{1\bar{1}1\bar{1}}$, $R^b_{1\bar{1}2\bar{2}}= R^b_{2\bar{2}1\bar{1}}$, and $R^b_{2\bar{2}2\bar{2}}$. Here we used the symmetry property $R^b_{i\bar{j}k\bar{\ell}} =R^b_{k\bar{\ell}i\bar{j}}$ satisfied by any {\rm BTP} metric (see (1.2) of Theorem 1.1 in \cite{ZhaoZ22}). Since $\theta^b$ is diagonal, we have
\begin{eqnarray*} (\nabla^bR^b)_{i\bar{j}k\bar{\ell}} & = &  d(R^b_{i\bar{j}k\bar{\ell}}) - \sum_r \big( R^b_{r\bar{j}k\bar{\ell}} \theta^b_{ir} - R^b_{i\bar{r}k\bar{\ell}} \theta^b_{rj} + R^b_{i\bar{j}r\bar{\ell}} \theta^b_{kr} - R^b_{i\bar{j}k\bar{r}} \theta^b_{r\ell } \big) \\
& = &  d(R^b_{i\bar{j}k\bar{\ell}}) - R^b_{i\bar{j}k\bar{\ell}} \big( \theta^b_{ii} -  \theta^b_{jj} + \theta^b_{kk} -  \theta^b_{\ell \ell } \big)
\end{eqnarray*}
When $i\neq j$ or $k\neq \ell$, then $R^b_{i\bar{j}k\bar{\ell}}=0$ hence the above quantity is $0$. When $i=j$  and $k=\ell$, it equals to $d(R^b_{i\bar{i}k\bar{k}})$. So we will have the desired condition $\nabla^bR^b=0$ provided that we know that the functions $x=R^b_{1\bar{1}1\bar{1}}$, $y=R^b_{1\bar{1}2\bar{2}}$, $z=R^b_{2\bar{2}2\bar{2}}$ are all constants. The first Bismut Ricci form
$$ \rho^{b(1)}= \sqrt{-1} \mbox{tr}(\Theta^b) = \sqrt{-1} (\Theta^b_{11} + \Theta^b_{22}) = (x+y)\sqrt{-1} \varphi_1\overline{\varphi}_1 + (y+z) \sqrt{-1}\varphi_2\overline{\varphi}_2 $$
is a global $(1,1)$-form on $M^3$. Its eigenvalues with respect to the K\"ahler form $\omega$ are $x+y, y+z, 0$. Since any holomorphic isometry of $M^3$ will preserve $\rho^{b(1)}$ and $\omega$, the homogeneity of $M^3$ implies that $x+y$ and $y+z$ are both constants. Similarly, we may consider the third Bismut Ricci, which is the usual (Riemannian) Ricci curvature of the  metric connection $\nabla^b$, defined by
$$ \mbox{Ric}^{b(3)}_{i\bar{j}} = \sum_r R^b_{ r\bar{j} i\bar{r} } $$
under any unitary frame. In our case, we have
$$ \rho^{b(3)}= \sqrt{-1} \sum_{i,j} \mbox{Ric}^{b(3)}_{i\bar{j}} \varphi_i \overline{\varphi}_j = x\sqrt{-1} \varphi_1\overline{\varphi}_1 + z\sqrt{-1}\varphi_2\overline{\varphi}_2 .$$
Since $\rho^{b(3)}$ is preserved by any holomorphic isometry, by considering $ \rho^{b(3)}\wedge \omega^2 $ and $(\rho^{b(3)} )^2\wedge \omega $, we know that $x$ and $z$ are constants, hence $y$ is also a constant. This establishes $\nabla^bR^b=0$ under the assumption that $a_1\neq a_2$.

When $a_1=a_2$, by Case (3) in Theorem 1.5 of \cite{ZhaoZ22}, we know that $(M^3,g)$ must be a Vaisman manifold. But we assumed $M^3$ to be simply-connected here, so we must have $g$ being K\"ahler, contradicting with the assumption that it is without K\"ahler de Rham factor. This concludes the proof in the non-balanced case.

{\em Case 2.} $g$ is balanced.

Assume that $(M^3,g)$ is a compact, balanced, non-K\"ahler {\rm BTP} manifold. Such threefolds were classified by Theorem 1.7 in \cite{ZhaoZ22}. Depending on the rank $r_B$ of the $B$-tensor, which is defined by $B_{i\bar{j}} =\sum_{r,s} T^j_{rs} \overline{T^i_{rs} } $ under any unitary frame, there are three possibilities.

When $r_B=1$, $(M^3,g)$ is the Wallach threefold, which is the flag threefold ${\mathbb P}(T_{{\mathbb P}^2})$ equipped with the Cartan-Killing metric, and the metric is {\rm BAS}.

When $r_B=3$, $(M^3,g)$ is a compact quotient of the complex Lie group $SO(3,{\mathbb C})$ equipped with a left-invariant metric. In this case the metric is Chern flat and by  Proposition \ref{ChernflatBAS}, Chern flat {\rm BTP} metrics are always {\rm BAS}.

When $r_B=2$, we are in the so-called {\em middle type}. Denote by $J$ the complex structure of $M^3$. According to Case (2) of Theorem 1.7 in \cite{ZhaoZ22}, either $M$ or a double cover of $M$ (which for simplicity we will still denote as $M$) will admit another complex structure $I$ compatible with the metric $g$, so that the resulting Hermitian threefold $(M,g,I)$ is Vaisman and non-K\"ahler. This will force the fundamental group of $M$ to be infinite, so this case can not occur either. Analogous to the $r_B=3$ (Chern flat) case, one can directly verify that $g$ is {\rm BAS} here without assuming $\pi_1(M)$ to be finite. Also, since $(M,g,J)$ and $(M,g,I)$ share the same Bismut connection, the latter would also be {\rm BAS}.

Assume that $(M^3,g)$ is a compact, balanced, non-K\"ahler {\rm BTP} threefold with $r_B=2$. From the discussion of \S 9 in \cite{ZhaoZ22}, we know that locally there always exist special unitary frames $e$ under which the connection and curvature matrix of $\nabla^b$ are given respectively by
$$ \theta^b = \left[ \begin{array}{ccc} \alpha & \beta_0 & 0 \\ - \beta_0 & \alpha & 0 \\ 0 & 0 & 0 \end{array} \right] , \ \ \ \  \Theta^b = \left[ \begin{array}{ccc} d\alpha & d\beta_0 & 0 \\ - d\beta_0 & d\alpha & 0 \\ 0 & 0 & 0 \end{array} \right] , $$
where $\alpha$, $\beta_0$ are $1$-forms satisfying $\overline{\alpha}=-\alpha$, $\overline{\beta}_0=\beta_0$. Furthermore, by formula (9.9) and (9.10) in \cite{ZhaoZ22}, one has
\begin{eqnarray*}
&& \Theta^b_{11} = \Theta^b_{22} = d\alpha = x (\varphi_{1\bar{1}} + \varphi_{2\bar{2}}) + \sqrt{-1}\,y (\varphi_{2\bar{1}} - \varphi_{1\bar{2}}) \\
&& \Theta^b_{12} = - \Theta^b_{21} = d\beta_0 = -\sqrt{-1}\,y (\varphi_{1\bar{1}} + \varphi_{2\bar{2}}) + (x+2) (\varphi_{2\bar{1}} - \varphi_{1\bar{2}})
\end{eqnarray*}
where $x$, $y$ are real-valued functions and $\varphi_{i\bar{j}} = \varphi_i \wedge \overline{\varphi}_j$. Here we have scaled the metric so that $|T|^2=1$.  The first Bismut Ricci form $\rho^{b(1)}=\sqrt{-1}\,\mbox{tr}(\Theta^b)=2\sqrt{-1}\,d\alpha$, hence $4x=s^b$ is the (first) Bismut scalar curvature thus is a global function, and it would be a constant if the universal cover of $M^3$ is homogeneous. Similarly, by considering $(\rho^{b(1)})^2 \wedge \omega$, we see that $x^2-y^2$ is also a constant, hence $y$ must be a constant as well. The all the possibly non-zero Bismut curvature components are $R^b_{i\bar{j}k\bar{\ell}}$ with indices between $1$ and $2$, given by constants $x$, $\pm \sqrt{-1}\,y$, and $\pm (x+2)$.  We claim that for any $1\leq i,j,k, \ell \leq 2$, it always holds that
\begin{equation} \label{eq:5.1}
 \sum_r \big( R^b_{r\bar{j}k\bar{\ell}} \,\theta^b_{ir} - R^b_{i\bar{r}k\bar{\ell}} \,\theta^b_{rj} \big) = 0.
 \end{equation}
To see this, notice that $\theta^b_{11}=\theta^b_{22}$, $\theta^b_{12}=-\theta^b_{21}$,  $\Theta^b_{11}=\Theta^b_{22}$, $\Theta^b_{12}=-\Theta^b_{21}$, and $R^b_{i\bar{j}k\bar{\ell}} = R^b_{k\bar{\ell}i\bar{j} }$. By letting $ij=11$, $22$, or $12$, we verify that (\ref{eq:5.1}) holds. Similarly, $ \sum_r \big( R^b_{i\bar{j}r\bar{\ell}} \,\theta^b_{kr} - R^b_{i\bar{j}k\bar{r}} \,\theta^b_{r\ell} \big) = 0$. Therefore,
$$ (\nabla^bR^b)_{i\bar{j}k\bar{\ell}} = d (R^b_{i\bar{j}k\bar{\ell}} ) - \sum_r \big( R^b_{r\bar{j}k\bar{\ell}} \,\theta^b_{ir} - R^b_{i\bar{r}k\bar{\ell}} \,\theta^b_{rj} + R^b_{i\bar{j}r\bar{\ell}} \,\theta^b_{kr} - R^b_{i\bar{j}k\bar{r}} \,\theta^b_{r\ell} \big) = d (R^b_{i\bar{j}k\bar{\ell}} ) = 0 $$
as $x$ and $y$ are constants. This shows that $\nabla^bR^b=0$, hence $g$ is {\rm BAS}, and we have completed the proof of Proposition \ref{prop5.3}.
\end{proof}
\begin{remark} It is worth mentioning that compact homogeneous $3$-folds have been classified in \cite{T}. Among these we find the Wallach $3$-fold $\SU(3)/\T$ which was already studied in Theorem \ref{SU}, and also the Calabi Eckmann manifold $N:= \SU(2)\times\SU(2)$. The manifold $N$ is endowed with a Samelson structure, but the Hermitian metric is supposed to be simply left-invariant (cf. Theorem \ref{Sam}).  \end{remark}

\begin{remark}
From the proof above, we see that the answer to Question \ref{question5.1} would be positive for $n=3$ if and only if any compact locally homogeneous non-K\"ahler Vaisman threefold $(M^3,g)$ are {\rm BAS}. Such a manifold is a metric fiber bundle over $S^1$ with fiber being a compact, locally homogeneous Sasakian $5$-manifold $N^5$.
\end{remark}
 Our last result deals with nilpotent Lie groups endowed with left-invariant nilpotent complex structures.
\begin{proof}[{\bf Proof of Proposition \ref{prop5.5}}]
Let $(M^n,g)$ be a compact non-K\"ahler Hermitian manifold whose universal cover is $(G,J,\tilde{g})$ where $G$ is a nilpotent Lie group, $\tilde{g}$ (which is the lift of $g$) is  a left-invariant metric,  and $J$ a left-invariant complex structure on $G$ compatible with $g$. We also assume that $J$ is nilpotent in the sense of Cordero, Fern\`andez, Gray, and Ugarte \cite{CFGU}. If $g$ is {\rm BTP}, then by \cite[Proposition 4.4]{ZhaoZ22}, we know that there exists a left-invariant unitary frame $e$ with dual coframe $\varphi$ on $G$ and a positive integer $r<n$ such that
\begin{equation}  \label{eq:5.2}
\left\{ \begin{array} {ll}
  d\varphi_i \,  = 0, & 1\leq i\leq r; \\
  d\varphi_{\alpha} = \sum_{i=1}^r Y_{\alpha i} \varphi_i \wedge \overline{\varphi}_i, & r+1\leq \alpha \leq n.
 \end{array}  \right.
\end{equation}
Here those $Y_{\alpha i}$ are constants, and $r<n$ as $g$ is not K\"ahler. In particular, $G$ is $2$-step nilpotent, and the complex structure $J$ is abelian, meaning that $[Jx,Jy] =[x,y]$ for any $x$, $y$ in the Lie algebra ${\mathfrak g}$ of $G$. Following \cite{VYZ}, denote the structure constants by
$$ C^j_{ik} = \langle [e_i,e_k],\overline{e}_j\rangle, \ \ \ D^j_{ik} = \langle [\overline{e}_j,e_k],e_i\rangle . $$
Then we have
\begin{equation*}
 T^j_{ik} = - C^j_{ik} - D^j_{ik} + D^j_{ki}\,, \ \ \ \ \
  \theta^b_{ij} = \sum_k \big( (C^j_{ki}+ D^j_{ki}) \varphi_k - \overline{(C^i_{kj}+ D^i_{kj})} \,\overline{ \varphi}_k  \big) \,.
 \end{equation*}
and the structure equation is given by
\begin{equation} \label{eq:5.3}
 d\varphi_j = -\frac{1}{2} \sum_{i,k} C^j_{ik} \varphi_i\wedge \varphi_k - \sum_{i,k} \overline{D^i_{jk}} \varphi_i \wedge \overline{\varphi}_k.
 \end{equation}
 Also, by \cite[Lemma 1]{LZ}, the Chern curvature $R$ is given by
\begin{equation} \label{eq:5.4}
R_{i\bar{j}k\bar{\ell}} = \sum_{s} \big( D^s_{ki} \overline{ D^s_{\ell j} } -   D^{\ell}_{si} \overline{ D^k_{s j} } -  D^{j}_{si} \overline{ D^k_{ \ell s} } -   \overline{ D^i_{s j} } D^{\ell}_{ks}  \big)
\end{equation}
Comparing equations (\ref{eq:5.2}) with (\ref{eq:5.3}), we see that for our $G$ we have $C=0$ and the only possibly non-zero $D$ components are
$$ D^i_{\alpha i}=-\overline{Y_{\alpha i} } , \ \ \ 1\leq i\leq r, \ r+1\leq \alpha \leq n. $$
In particular, the only possibly non-zero entries of $\theta^b$ under $e$ are
\begin{equation} \label{eq:5.5}
\theta^b_{ii} = \sum_{\alpha =r+1}^n \big( Y_{\alpha i} \overline{\varphi}_{\alpha} - \overline{Y_{\alpha i}} \varphi_{\alpha}  \big), \ \ \ \ 1\leq i\leq r.
\end{equation}
Also, the last two terms on the right hand side of (\ref{eq:5.4}) vanish as $D^*_{s*}=0$ for $s\leq r$ and $D^*_{*s}=0$ for $s>r$. Thus the only possibly non-zero components of $R$ are
\begin{equation}
R_{i\bar{i}\alpha \bar{\beta}}  = \overline{Y_{\alpha i}} Y_{\beta i}, \ \ \ R_{i\bar{j}j\bar{i}} = -\sum_{\alpha =r+1}^n \overline{Y_{\alpha i}} Y_{\alpha j},   \ \ \ \forall \ 1\leq i,j\leq r, \, \forall \ r+1\leq \alpha ,\beta \leq n.
\end{equation}
 As we already noticed in \eqref{curv}, for {\rm BTP} manifolds we have that $\nabla^bR^b=0$ if and only if $\nabla^bR=0$. For any $1\leq a,b,c,d\leq n$, since all $R_{a\bar{b}c\bar{d}}$ are constants and $\theta^b$ is diagonal, we have
\begin{eqnarray*}
(\nabla^b R)_{a\bar{b}c\bar{d}}  & = & d(R_{a\bar{b}c\bar{d}}) - \sum_s \big( R_{s\bar{b}c\bar{d}} \theta^b_{as} - R_{a\bar{s}c\bar{d}} \theta^b_{sb} + R_{a\bar{b}s\bar{d}} \theta^b_{cs} - R_{a\bar{b}c\bar{s}} \theta^b_{sd} \big) \\
& = &  - R_{a\bar{b}c\bar{d}} \,\big( \theta^b_{aa} -  \theta^b_{bb} + \theta^b_{cc} - \theta^b_{dd} \big)
\end{eqnarray*}
In the last line, the first factor $R_{a\bar{b}c\bar{d}}$ is non-trivial only if $(abcd)=(ii\alpha\beta)$ or $(ijji)$, but in both of these cases the second factor vanishes by (\ref{eq:5.5}). Thus $\nabla^bR=0$, hence $\nabla^bR^b=0$ and $g$ is {\rm BAS}. This completes the proof of Proposition \ref{prop5.5}.
\end{proof}

\end{document}